\theoremstyle{plain}
    \newtheorem{theorem}{Theorem}[section]
    \newtheorem{lemma}[theorem]{Lemma}
    \newtheorem{proposition}[theorem]{Proposition}
    \newtheorem{corollary}[theorem]{Corollary}
\theoremstyle{definition}
    \newtheorem{definition}{Definition}[section]
    \newtheorem{remark}{Remark}[section]
    \newtheorem*{acknowledgement}{Acknowledgement}
\theoremstyle{remark}
\numberwithin{equation}{section}
\newcommand{\cleq}{\lesssim}
\newcommand{\ceq}{\approx} 
\def\norm#1{\left\Vert #1 \right\Vert} 
\newcommand{\R}{\mathbb{R}}
\DeclareMathOperator{\re}{Re}
\begin{document}

\title[Scattering for NLW with scale-invariant damping]{Scattering and asymptotic order for the 
wave equations with the scale-invariant damping and mass}
\author[T. Inui]{Takahisa Inui}
\address{Department of Mathematics, Graduate School of Science, Osaka University, Toyonaka, Osaka 560-0043, Japan}
\email{inui@math.sci.osaka-u.ac.jp}
\author[H. Mizutani]{Haruya Mizutani}
\address{Department of Mathematics, Graduate School of Science, Osaka University, Toyonaka, Osaka 560-0043, Japan}
\email{haruya@math.sci.osaka-u.ac.jp}
\date{\today}
\keywords{wave equation, scaling invariant damping, scattering, energy critical nonlinearity, Strichartz estimates}
\subjclass[2010]{35L05, 35B40, 35L70, etc.}
\maketitle

\begin{abstract}
We consider the linear wave equation with the time-dependent scale-invariant damping and mass. We also treat the corresponding equation with the energy critical nonlinearity. Our aim is to show that the solution scatters to a modified linear wave solution and to obtain its asymptotic order. 
\end{abstract}

\tableofcontents

\section{Introduction}

\subsection{Motivation}
We consider two equations in the present paper. One is the linear wave equation with the scale-invariant damping and mass.
\begin{align}
\label{DW}
\tag{DW}
	\begin{cases}
	\displaystyle \partial_t^2 u_l - \Delta u_l +\frac{\mu_1}{1+t} \partial_t u_l + \frac{\mu_2}{(1+t)^2} u_l =0, & (t,x) \in (0,\infty) \times \mathbb{R}^d,
	\\
	(u_l(0), \partial_t u_l(0)) = (u_{l,0}, u_{l,1}), &  x \in \mathbb{R}^d,
	\end{cases}
\end{align}
where $\mu_1,\mu_2 \in \mathbb{R}$, $d\in \mathbb{N}$, and $(u_{l,0}, u_{l,1}) \in H^1(\mathbb{R}^d)\times L^2(\mathbb{R}^d)$. 
The other is the corresponding wave equation with the energy critical nonlinearity. 
\begin{align}
\label{NLDW}
\tag{NLDW}
	\begin{cases}
	\displaystyle \partial_t^2 u -\Delta u +\frac{\mu_1}{1+t} \partial_t u + \frac{\mu_2}{(1+t)^2} u = \lambda |u|^{\frac{4}{d-2}}u,
	& (t,x) \in (0,T) \times \mathbb{R}^d,
	\\
	(u(0),\partial_t u(0))=(u_0, u_1), &x \in \mathbb{R}^d,
	\end{cases}
\end{align}
where $\mu_1  \in [0,\infty)$, $\mu_2 \in \mathbb{R}$, $d\geq 3$, $\lambda =\pm 1$, and $(u_0, u_1) \in H^1(\mathbb{R}^d) \times L^2(\mathbb{R}^d)$.

First, we consider the linear equation. 
Before considering our linear equation, we recall the classification by Wirth \cite{Wir04,Wir06,Wir07_1} of the following wave equation with time-dependent damping.
\begin{align}
\label{eq1.0}
	\partial_t^2 w_l -\Delta w_l +\frac{b_0}{(1+t)^{\beta}} \partial_t w_l = 0,
	\quad (t,x) \in (0,T) \times \mathbb{R}^d,
\end{align}
where $b_0>0$ and $\beta \in \mathbb{R}$. 
When $\beta <-1$, the equation is called overdamping. In this case, the solution does not decay to zero when $t \to \infty$. 
When $-1 \leq \beta <1$, the damping term is called effective. In this case for any $b_0$,  the solution behaves like one of the heat equation obtained formally by taking $\partial_t^2w_{l}\equiv 0$. When $\beta >1$, it is known that, for any $b_0$, the solution scatters to a solution of the free wave equation. Then, we say that we have scattering. When $\beta =1$, then the equation is invariant under the scaling
\begin{align*}
	w_l^{\sigma}(t,x):= w_l( \sigma (1+t)-1,\sigma x).
\end{align*}
Namely, $w_l^{\sigma}$ is a solution provided that $w_l$ is a solution. Thus, this case is called scale-invariant. 
\begin{table}[htb]
  \begin{tabular}{|l|l|} \hline
    $\beta\in (-\infty,-1)$ & overdamping  \\ \hline
    $\beta\in [-1,1)$ & effective  \\ \hline
    $\beta =1$ & scale-invariant  \\ \hline
    $\beta\in (1,\infty)$ & scattering  \\ \hline
  \end{tabular}
\end{table}

In the scale-invariant case, Wirth \cite{Wir04} showed that the solution satisfies the following $L^p$-$L^q$ estimates.
\begin{align*}
	\|w_l(t)\|_{L^q}
	\lesssim 
	\begin{cases}
	(1+t)^{\max\{ -\frac{d-1}{2}\left(\frac{1}{p}-\frac{1}{q}\right)-\frac{b_0}{2}, -d\left(\frac{1}{p}-\frac{1}{q}\right)+1-b_0\}}, 
	&\text{ if } b_0\in (0,1),
	\\
	(1+t)^{\max\{ -\frac{d-1}{2}\left(\frac{1}{p}-\frac{1}{q}\right)-\frac{b_0}{2},-d\left(\frac{1}{p}-\frac{1}{q}\right)\}},
	&\text{ if } b_0\in (1,\infty),
	\end{cases} 
\end{align*}
where $1<p\le 2$, $1/p+1/q=1$, the implicit constant depends on $\|w_{l,0}\|_{H^s_p}+\|w_{l,1}\|_{H_p^{s-1}}$, and  $s=d(1/p-1/q)$.
This shows that, if $b_0$ is sufficiently large, then the solution behaves like that of the corresponding heat equation 
\begin{align*}
	\frac{b_0}{1+t} \partial_t h-\Delta h = 0,
\end{align*}
whose decay order is given by $-n(1/p-1/q)$, and that, if $b_0$ is sufficiently small, then the solution behaves like that of the wave equation. 
Therefore, the scale-invariant case is critical in the sense of the global behavior of the solutions and the constant $b_0$ plays an important role to determine the global behavior of the solutions unlike the effective and scattering case.

It is known  by Wirth \cite{Wir06,Wir07_2} that, if $b_0 \in (0,1) \cup (1,2)$, then the solution $w_l$ satisfies that there exists a function $\overrightarrow{w_{+}} \in \dot{H}^1(\mathbb{R}^d) \times L^2(\mathbb{R}^d)$ such that
\begin{align}
\label{eq1.1}
	\lim_{t \to \infty} (1+t)^{\frac{b_0}{2}} \| \overrightarrow{w_l}(t) -  (1+t)^{-\frac{b_0}{2}}\mathcal{W}(t) \overrightarrow{w_+} \|_{\dot{H}^1 \times L^2} =0,
\end{align}
where $\mathcal{W}(t)$ is a solution map of the free wave equation and $\overrightarrow{w_l}(t)=(w_l(t),\partial_t w_l(t))$. 
Note that $\overrightarrow{w_+} \not \equiv 0$ for non-trivial initial data (see \cite[Theorem 3.1]{Wir07_2}). 
This means that the solution $\overrightarrow{w_l}$ behaves like the modified linear wave solution $(1+t)^{-\frac{b_0}{2}}\mathcal{W}(t) \overrightarrow{w_+}$ at infinite time and its asymptotic order is $(1+t)^{-\frac{b_0}{2}}$. 
This result can be also interpreted as follows. By the Liouville transform $\varphi_l := (1+t)^{\frac{b_0}{2}} w_l$, $\varphi_l$ satisfies the linear Klein-Gordon equation with time-dependent mass
\begin{align*}
	\begin{cases}
	\displaystyle \partial_t^2 \varphi_l - \Delta \varphi_l +\frac{b_0(2-b_0)}{4(1+t)^2} \varphi_l = 0, & (t,x) \in (0,\infty) \times \mathbb{R}^d,
	\\
	(\varphi_l(0),\partial_t \varphi_l(0))=(\varphi_{l,0},\varphi_{l,1}):=(w_l(0),\frac{b_0}{2} w_l(0)+\partial_t w_l(0)), &x \in \mathbb{R}^d.
	\end{cases}
\end{align*}
Therefore, \eqref{eq1.1} means that $\varphi_l$ behaves like the solution to the free wave equation. 
In this viewpoint, the asymptotic order is not clear since we already use the transformation $\varphi_l = (1+t)^{\frac{b_0}{2}} w_l$. However, in the special case of $b_0=2$, $\varphi_l$ is nothing but the solution of the free wave equation. Therefore, for any $\alpha>0$, we have $\lim_{t \to \infty} (1+t)^{\alpha}\| \overrightarrow{\varphi_l}(t) -  \mathcal{W}(t)(\varphi_{l,0},\varphi_{l,1}) \|_{\dot{H}^1 \times L^2} =0$. Therefore, turning back to \eqref{eq1.0} by retransform, we get the asymptotic order $(1+t)^{-\alpha-\frac{b_0}{2}}$ for $w_l$. 
This observation makes us expect to get the asymptotic order for \eqref{DW} with more general $\mu_1$ and $\mu_2$. In the present paper, we discuss the asymptotic order for \eqref{DW}.

Next, we are also interested in the nonlinear problem. 
Recently, the nonlinear wave equation with the scale-invariant damping
\begin{align*}
	\partial_t^2 w -\Delta w +\frac{b_0}{(1+t)^{\beta}} \partial_t w = \lambda |w|^{p},
	\quad (t,x) \in (0,T) \times \mathbb{R}^d,
\end{align*}
where $b_0>0$, $\beta \in \mathbb{R}$, and $p>1$, has been well studied from the viewpoint of a critical exponent $p_*$. 
%
The critical exponent $p_*$ is the boundary between  small data global existence and small data blow-up. In the overdamping case, $p_*=\infty$, that is, the small data global existence holds for all $p>1$. See \cite{IkWa20}. In the effective case, $p_*$ is the Fujita exponent $p_{F}=1+2/d$, which is the critical exponent for the heat equation with the power type nonlinearity. See \cite{Nis11,LNZ12,DLR13,IkWa15,FIW16a,LaZh17a,Wak17,IkIn19} and references therein. In the scale-invariant case, the critical exponent is still not clear in general. However, the Fujita exponent $p_F$ is no longer critical. See \cite{Wak14,Dab15,DaLu15,DLR15,LTW17a,NPR17,IkSo18,PaRe18,PaRe19,LST20} and references therein for the concrete information. In the scattering case, the critical exponent is the Strauss exponent, which is the critical exponent for the free wave equation. See \cite{LaTa18,LST19} and references therein.

Turn back to our nonlinear equation. The power $1+4/(d-2)$ of our energy critical nonlinearity seems to be much larger than its critical exponent. Thus, our first aim is to show small data global existence. And the second aim is to obtain the asymptotic behavior of the nonlinear solution.

The first author \cite{Inu18} considered the following type equation. 
\begin{align}
\label{eq1.4}
	\partial_t^2 w -\Delta w +\frac{2}{1+t} \partial_t w = \lambda |w|^{\frac{4}{d-2}}w,
	\quad (t,x) \in (0,T) \times \mathbb{R}^d.
\end{align}
In the similar way to the linear problem, one can transform this equation by the Liouville transform $\varphi = (1+t)^{\frac{\mu_1}{2}} w$ into the nonlinear wave equation 
\begin{align}
\label{eq1.5}
	\partial_t^2 \varphi - \Delta \varphi =  \frac{\lambda}{(1+t)^{\frac{4}{d-2}}} |\varphi|^{\frac{4}{d-2}}\varphi, 
	\quad (t,x) \in (0,T) \times \mathbb{R}^d.
\end{align}
It was proved by \cite{Inu18} that  if $d \geq 3$, $\varphi$ is a global solution of \eqref{eq1.5}, and $\varphi$ satisfies $\| \varphi \|_{L_{t,x}^{2(d+1)/(d-2)}([0,\infty))}<\infty$, then there exists $\overrightarrow{\varphi_+} \in \dot{H}^1(\R^d) \times L^2(\R^d)$ such that
$\lim_{t \to \infty} (1+t)^{4/(d-2)} \|\overrightarrow{\varphi}(t) -\mathcal{W}(t) \overrightarrow{\varphi_+}\|_{\dot{H}^{1} \times L^2} =0$, where $\overrightarrow{\varphi}=(\varphi, \partial_t \varphi)$. This result means that the solution to \eqref{eq1.5} scatters to a solution of the free wave solution and its asymptotic order is $(1+t)^{-4/(d-2)}$. 
The order comes from the decay $(1+t)^{-4/(d-2)}$ in front of the nonlinearity of \eqref{eq1.5}. 
Turning back to \eqref{eq1.4} by retransform $w= (1+t)^{-\frac{\mu_1}{2}} \varphi$, we can obtain the small data global existence, scattering results, and asymptotic order for \eqref{eq1.4}. Our aim in the present paper is to get the asymptotic behavior and its asymptotic order for \eqref{NLDW} with the more general $\mu_1$ and $\mu_2$.

\subsection{Main results}


\subsubsection{Linear problem} 
By the Liouville transfrom $v_l := (1+t)^{\frac{\mu_1}{2}} u_l$, we transform \eqref{DW} into  the following Klein-Gordon equation with scale-invariant mass.
\begin{align}
\label{KG}
\tag{KG}
	\begin{cases}
	\displaystyle \partial_t^2 v_l - \Delta v_l +\frac{\mu}{(1+t)^2}v_l =0, & (t, x) \in (0,\infty) \times \mathbb{R}^d,
	\\
	(v_l(0), \partial_t v_l(0)) = (v_{l,0}, v_{l,1}), &  x \in \mathbb{R}^d,
	\end{cases}
\end{align}
where we set $(v_{l,0},v_{l,1}):=(u_{l,0}, \mu_1u_{l,0}/2+u_{l,1})$ and 
\begin{align*}
	\mu:=\frac{\mu_1(2-\mu_1)}{4} +\mu_2.
\end{align*}
We set $\overrightarrow{v_l}(t)=(v_l(t),\partial_t v_l(t))$. 
Let $\nu:=\frac{1}{2}\sqrt{1-4\mu}$ when $\mu \leq 1/4$ and $\nu:=\frac{i}{2}\sqrt{4\mu-1}$ when $\mu > 1/4$.  
In what follows, $\mathcal{W}(t)$ denotes the solution map of the free wave equation (see Section \ref{sec2.1} below).

Then, we get the following scattering and the asymptotic order. 

\begin{theorem}
\label{thm1.0}
Let $d\in \mathbb{N}$ and $\mu \geq 0$. Then, there exists $\overrightarrow{v_{+}} \in \dot{H}^1(\mathbb{R}^d) \times L^2(\mathbb{R}^d)$ such that
\begin{align*}
	\norm{\overrightarrow{v_l}(t) - \mathcal{W}(t)\overrightarrow{v_{+}}}_{\dot{H}^1 \times L^2}
	\cleq \mu (\norm{v_{l,0}}_{L^2}+\norm{v_{l,1}}_{L^2})
	\begin{cases}
	(1+t)^{-\frac{1}{2} + \re\nu}, & \mu \neq 1/4,
	\\
	(1+t)^{-\frac{1}{2}}(1+ \log(1+t)), & \mu =1/4,
	\end{cases}
\end{align*}
where the implicit constant does not depend on time and the initial data $(v_{l,0},v_{l,1})$. 
\end{theorem}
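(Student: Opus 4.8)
The plan is to reduce, by Duhamel's formula, to a growth estimate for $\norm{v_l(t)}_{L^2}$, and then prove that estimate by a frequency-by-frequency ODE analysis. Writing $\overrightarrow{v_l}=(v_l,\partial_t v_l)$, equation \eqref{KG} reads $\partial_t\overrightarrow{v_l}=\cA\overrightarrow{v_l}+\bigl(0,-\mu(1+t)^{-2}v_l\bigr)$, where $\cA$ generates the free wave group $\mathcal{W}(t)$, a unitary group on $\dot H^1\times L^2$. Hence $\mathcal{W}(-t)\overrightarrow{v_l}(t)=\overrightarrow{v_l}(0)-\int_0^t\mathcal{W}(-s)\bigl(0,\mu(1+s)^{-2}v_l(s)\bigr)\,ds$; once the improper integral is seen to converge in $\dot H^1\times L^2$, we set $\overrightarrow{v_+}:=\overrightarrow{v_l}(0)-\int_0^\infty\mathcal{W}(-s)\bigl(0,\mu(1+s)^{-2}v_l(s)\bigr)\,ds\in\dot H^1\times L^2$, and, using unitarity of $\mathcal{W}$ together with $\norm{(0,g)}_{\dot H^1\times L^2}=\norm{g}_{L^2}$,
\begin{align*}
\norm{\overrightarrow{v_l}(t)-\mathcal{W}(t)\overrightarrow{v_+}}_{\dot H^1\times L^2}
&=\norm{\int_t^\infty\mathcal{W}(t-s)\bigl(0,\mu(1+s)^{-2}v_l(s)\bigr)\,ds}_{\dot H^1\times L^2}\\
&\leq\mu\int_t^\infty\frac{\norm{v_l(s)}_{L^2}}{(1+s)^2}\,ds.
\end{align*}
Thus the theorem follows from the growth bound
\[
\norm{v_l(s)}_{L^2}\cleq\bigl(\norm{v_{l,0}}_{L^2}+\norm{v_{l,1}}_{L^2}\bigr)\omega(s),\qquad
\omega(s):=\begin{cases}(1+s)^{\frac12+\re\nu},&\mu\neq\tfrac14,\\[4pt](1+s)^{\frac12}\bigl(1+\log(1+s)\bigr),&\mu=\tfrac14,\end{cases}
\]
because then $\mu\int_t^\infty(1+s)^{-2}\omega(s)\,ds\cleq\mu\,(1+t)^{-1}\omega(t)$ (here $\re\nu<\tfrac12$, i.e. $\mu>0$; the case $\mu=0$ is the free wave equation and the statement is trivial), and the same integral over $[0,\infty)$ gives the convergence of the integral defining $\overrightarrow{v_+}$.

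\emph{The growth bound via Fourier analysis.} By Plancherel it suffices to establish, uniformly in $\xi$, the pointwise bound $|\widehat{v_l}(s,\xi)|\cleq\bigl(|\widehat{v_{l,0}}(\xi)|+|\widehat{v_{l,1}}(\xi)|\bigr)\omega(s)$ for the solution of $\partial_t^2\widehat{v_l}+\bigl(|\xi|^2+\mu(1+t)^{-2}\bigr)\widehat{v_l}=0$. Since $\mu\geq0$, the energy $|\partial_t\widehat{v_l}|^2+\bigl(|\xi|^2+\mu(1+t)^{-2}\bigr)|\widehat{v_l}|^2$ is non-increasing, which for $|\xi|\geq1$ yields at once the time-independent bound $|\widehat{v_l}(t,\xi)|\cleq|\widehat{v_{l,0}}(\xi)|+|\widehat{v_{l,1}}(\xi)|$. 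For $|\xi|\leq1$ put $\sigma=\log(1+t)$, $\tau=|\xi|e^\sigma$ and $\widehat{v_l}(t,\xi)=(1+t)^{1/2}\psi(\sigma)$; the equation becomes $\psi''+\bigl(|\xi|^2e^{2\sigma}-\nu^2\bigr)\psi=0$, i.e. $\psi$ is, up to the factor $\tau^{1/2}$, a Bessel function of order $\nu$ in $\tau$. On $\{\tau\leq1\}$ the potential differs from the constant $-\nu^2$ by a quantity of size $\tau^2\leq1$, and a Gronwall estimate for $e^{\mp\nu\sigma}(\psi'\pm\nu\psi)$ — which degenerates, producing an extra factor $1+\sigma$, exactly when $\nu=0$, i.e. $\mu=\tfrac14$ — gives $|\psi(\sigma)|\cleq(1+t)^{-1/2}\omega(t)\bigl(|\widehat{v_{l,0}}(\xi)|+|\widehat{v_{l,1}}(\xi)|\bigr)$. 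On $\{\tau\geq1\}$ (which is all of $\{\sigma\geq0\}$ when $\mu>\tfrac14$) the ``adiabatic invariant'' $\bigl(|\psi'|^2+(|\xi|^2e^{2\sigma}-\nu^2)|\psi|^2\bigr)/(|\xi|^2e^{2\sigma}-\nu^2)$ is non-increasing, which pins $|\psi|$ to its size at $\tau\sim1$; since there $|\xi|^{-1}\leq1+t$ — so that $|\xi|^{-\re\nu}\cleq(1+t)^{\re\nu}$ and $\log(1/|\xi|)\cleq\log(1+t)$ — one again obtains $|\psi(\sigma)|\cleq(1+t)^{-1/2}\omega(t)\bigl(|\widehat{v_{l,0}}(\xi)|+|\widehat{v_{l,1}}(\xi)|\bigr)$. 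Multiplying by $(1+t)^{1/2}$ gives the uniform pointwise bound, hence the growth bound.

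\emph{Where the difficulty lies.} The delicate point is the low-frequency part of the second step: getting the \emph{sharp} growth exponent $\tfrac12+\re\nu$ uniformly in $\xi$ relies on the cancellation of the powers of $|\xi|$ between the pre-oscillatory regime $\tau\leq1$ (where $|\psi|$ may grow like $|\xi|^{-\re\nu}$) and the oscillatory regime $\tau\geq1$ (where $(1+t)^{-1}\gtrsim|\xi|$), and on treating the confluence at $\mu=\tfrac14$, where $\nu\to0$, the indicial exponents $\tfrac12\pm\nu$ of the limiting Euler equation $\partial_t^2\widehat{v_l}+\mu(1+t)^{-2}\widehat{v_l}=0$ collide, and the logarithmic correction appears (and where the Gronwall constant for $\mu<\tfrac14$ would otherwise blow up). The Hilbert-space bookkeeping in the reduction step — that $t\mapsto\mathcal{W}(-t)\overrightarrow{v_l}(t)$ is differentiable with the stated derivative and converges as $t\to\infty$ — is routine once the growth bound is available.
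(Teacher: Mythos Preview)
Your approach is correct and matches the paper's at the top level: Duhamel's formula plus unitarity of $\mathcal W$ reduces everything to the $L^2$ growth bound $\|v_l(t)\|_{L^2}\lesssim\omega(t)(\|v_{l,0}\|_{L^2}+\|v_{l,1}\|_{L^2})$, after which the tail integral $\mu\int_t^\infty(1+s)^{-2}\omega(s)\,ds$ gives exactly the stated rate. The only real difference is in how that growth bound is obtained. The paper quotes it as Lemma~\ref{lem2.1} (B\"ohme--Reissig) and reproves it in Appendix~A by writing $\widehat{v_l}$ explicitly in terms of Bessel functions $J_\nu,Y_\nu$ of the variable $\tau=(1+t)|\xi|$ and invoking their standard small- and large-argument asymptotics, splitting at a fixed large $\tau=N$. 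Your argument is a self-contained ODE version of the same computation: after the substitution $\widehat{v_l}=(1+t)^{1/2}\psi(\sigma)$ you split at $\tau\sim1$, use Gronwall (via variation of parameters against $e^{\pm\nu\sigma}$) in the pre-oscillatory zone, and the monotone quantity $|\psi'|^2/q+|\psi|^2$ with $q=\tau^2-\nu^2$ in the oscillatory zone, recovering the same exponents. Your route avoids special-function bookkeeping; the paper's buys, as a by-product, precise $\xi$-pointwise bounds on $E_0,E_1,\dot E_0,\dot E_1$ (Lemma~\ref{lemA.8}) that are reused for the Strichartz estimates in Proposition~\ref{prop2.3}. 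One phrasing quibble: your parenthetical ``which is all of $\{\sigma\ge0\}$ when $\mu>1/4$'' should say that the \emph{adiabatic argument applies on} all of $\sigma\ge0$ in that case (since then $q=\tau^2+|\nu|^2>0$ throughout), not that the set $\{\tau\ge1\}$ equals $\{\sigma\ge0\}$.
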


\begin{remark}
\ 
\begin{enumerate}
\item We note that $\re \nu <1/2$ when $\mu>0$ and thus the right hand side decays. 
If $\mu=0$, then the right hand side is identically zero. 
\item If $\mu_2=0$, i.e. no mass term, then the assumption $\mu\geq 0$ implies $\mu_1 \in [0,2]$. The case of $\mu_1>2$ and $\mu_2=0$ is not treated in the paper. 
\item It is known that $\overrightarrow{v_+}$ is not identically zero when $0\leq \mu <1/4$. Indeed, it is trivial when $\mu=0$.  When $\mu\in (0,1/4)$, setting $u_l =(1+t)^{-\frac{\mu_1}{2}}v_l=(1+t)^{-\frac{\mu_1}{2}+\frac{1}{2} + \nu}w_l$, we get
\begin{align*}
	\partial_t^2 w_l -\Delta w_l +\frac{1+2\nu}{1+t} \partial_t w_l = 0,
	\quad (t,x) \in (0,T) \times \mathbb{R}^d,
\end{align*}
where we note that $\nu \in (0,1/2)$ when $\mu\in (0,1/4)$. By the result of Wirth \cite{Wir07_2}, we have $\| \overrightarrow{w_l} (t) \|_{\dot{H}^1 \times L^2} \ceq (1+t)^{-1/2-\nu}$. This implies that $\| \overrightarrow{v_l} (t) \|_{\dot{H}^1 \times L^2} \ceq 1$. Therefore, $\overrightarrow{v_+} \neq 0$ in Theorem \ref{thm1.0} when $\mu\in (0,1/4)$. 
\end{enumerate}

\end{remark}


By retransforming $u_l=(1+t)^{-\mu_1/2}v_l$, we have the following result for \eqref{DW}. This is an improvement of \eqref{eq1.1} above obtained by \cite{Wir06,Wir07_2} from the viewpoint of asymptotic order. 
\begin{corollary}
\label{cor1.0}
Let $d\in \mathbb{N}$ and  $\mu \geq 0$ and $u_l$ be the solution of \eqref{DW}. Then, there exists $\overrightarrow{v_{+}} \in \dot{H}^1(\mathbb{R}^d) \times L^2(\mathbb{R}^d)$ such that
\begin{align*}
	&\norm{u_l(t) -(1+t)^{-\frac{\mu_1}{2}} (\mathcal{W}(t)\overrightarrow{v_{+}} )_1}_{\dot{H}^1}
	\\
	&\cleq \mu 
	(\norm{u_{l,0}}_{L^2}+\norm{u_{l,1}}_{L^2})
	\begin{cases}
	(1+t)^{-\frac{1}{2} + \re\nu - \frac{\mu_1}{2}}, & \mu \neq 1/4,
	\\
	(1+t)^{-\frac{1}{2} - \frac{\mu_1}{2}}(1+\log(1+t)), & \mu =1/4,
	\end{cases}
\end{align*}
and 
\begin{align*}
	&\norm{\partial_t u_l(t) -(1+t)^{-\frac{\mu_1}{2}} (\mathcal{W}(t)\overrightarrow{v_{+}})_2 }_{L^2}
	\\
	&\cleq (\mu +|\mu_1|)
	(\norm{u_{l,0}}_{L^2}+\norm{u_{l,1}}_{L^2})
	\begin{cases}
	(1+t)^{-\frac{1}{2} + \re\nu - \frac{\mu_1}{2}}, & \mu \neq 1/4,
	\\
	(1+t)^{-\frac{1}{2} - \frac{\mu_1}{2}} (1+\log(1+t)), & \mu =1/4,
	\end{cases}
\end{align*}
where $( \mathcal{W}(t)\overrightarrow{v_{+}})_{j}$ denotes the $j$-th component of $\mathcal{W}(t)\overrightarrow{v_{+}}$. 
\end{corollary}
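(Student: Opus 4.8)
The plan is to deduce both estimates from Theorem~\ref{thm1.0} via the substitution $u_l=(1+t)^{-\mu_1/2}v_l$, which (as set up before Theorem~\ref{thm1.0}) turns \eqref{DW} into \eqref{KG}, together with one a priori bound on $\|v_l(t)\|_{L^2}$. The $\dot{H}^1$ estimate is immediate: since
\[
u_l(t)-(1+t)^{-\frac{\mu_1}{2}}(\mathcal{W}(t)\overrightarrow{v_{+}})_1=(1+t)^{-\frac{\mu_1}{2}}\bigl(v_l(t)-(\mathcal{W}(t)\overrightarrow{v_{+}})_1\bigr),
\]
I would take the $\dot{H}^1$ norm, bound $\|v_l(t)-(\mathcal{W}(t)\overrightarrow{v_{+}})_1\|_{\dot{H}^1}\le\|\overrightarrow{v_l}(t)-\mathcal{W}(t)\overrightarrow{v_{+}}\|_{\dot{H}^1\times L^2}$, apply Theorem~\ref{thm1.0}, and convert the data norms through $v_{l,0}=u_{l,0}$, $v_{l,1}=\mu_1u_{l,0}/2+u_{l,1}$, so that $\|v_{l,0}\|_{L^2}+\|v_{l,1}\|_{L^2}\lesssim\|u_{l,0}\|_{L^2}+\|u_{l,1}\|_{L^2}$.

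For the time derivative, differentiating the substitution gives
\[
\partial_t u_l(t)-(1+t)^{-\frac{\mu_1}{2}}(\mathcal{W}(t)\overrightarrow{v_{+}})_2=(1+t)^{-\frac{\mu_1}{2}}\bigl(\partial_t v_l(t)-(\mathcal{W}(t)\overrightarrow{v_{+}})_2\bigr)-\frac{\mu_1}{2}(1+t)^{-\frac{\mu_1}{2}-1}v_l(t).
\]
The first term on the right is handled exactly as above (take the $L^2$ norm, dominate by $\|\overrightarrow{v_l}(t)-\mathcal{W}(t)\overrightarrow{v_{+}}\|_{\dot{H}^1\times L^2}$, apply Theorem~\ref{thm1.0}), and contributes the part of the bound carrying the factor $\mu$. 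Thus the corollary reduces to the a priori estimate
\[
\|v_l(t)\|_{L^2}\lesssim\bigl(\|v_{l,0}\|_{L^2}+\|v_{l,1}\|_{L^2}\bigr)\begin{cases}(1+t)^{\frac12+\re\nu},&\mu\neq 1/4,\\(1+t)^{\frac12}\bigl(1+\log(1+t)\bigr),&\mu=1/4,\end{cases}
\]
with a constant depending only on $\mu$ and $d$; inserting it into the remaining term $\tfrac{|\mu_1|}{2}(1+t)^{-\mu_1/2-1}\|v_l(t)\|_{L^2}$ produces exactly the contribution carrying the factor $|\mu_1|$ (and the logarithmic correction at $\mu=1/4$).

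I expect this a priori bound to be the main obstacle, and I would establish it by Fourier analysis of \eqref{KG}. For fixed $\xi$, $\phi(t):=\widehat{v_l}(t,\xi)$ solves $\ddot\phi+\bigl(|\xi|^2+\mu(1+t)^{-2}\bigr)\phi=0$ with data $(\widehat{v_{l,0}}(\xi),\widehat{v_{l,1}}(\xi))$, and I would split into the regions $|\xi|(1+t)\le 1$ and $|\xi|(1+t)\ge 1$. In the first region $|\xi|^2$ is a perturbation of the Euler equation $\ddot\phi+\mu(1+t)^{-2}\phi=0$, whose fundamental system is $\{(1+t)^{1/2+\nu},(1+t)^{1/2-\nu}\}$ when $\mu\neq1/4$ and $\{(1+t)^{1/2},(1+t)^{1/2}\log(1+t)\}$ when $\mu=1/4$; representing $\phi$ by Duhamel's formula against this system with source $-|\xi|^2\phi$ and using that $|\xi|^2(1+t)^2\le1$ there, a Gronwall argument yields $|\phi(t)|\lesssim(1+t)^{1/2+\re\nu}(|\phi(0)|+|\dot\phi(0)|)$ (with the extra log at $\mu=1/4$), and in particular controls $(|\phi|,|\dot\phi|)$ at the transition time $t_\xi$ defined by $1+t_\xi=1/|\xi|$ (when $|\xi|\ge1$ there is no such time and one argues from $t=0$ directly). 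In the second region I would use the monotonicity of the ODE energy $E_\xi(t):=|\dot\phi(t)|^2+\bigl(|\xi|^2+\mu(1+t)^{-2}\bigr)|\phi(t)|^2$, which is non-increasing because $\mu\ge0$; then $|\xi|^2|\phi(t)|^2\le E_\xi(t)\le E_\xi(t_\xi)$ together with $|\xi|\ge(1+t)^{-1}$ propagates the same bound $|\phi(t)|\lesssim(1+t)^{1/2+\re\nu}(|\phi(0)|+|\dot\phi(0)|)$ for all later $t$. Squaring, integrating in $\xi$, and applying Plancherel gives the claimed $L^2$ estimate. The delicate points are the behaviour near $\mu=1/4$, where $\nu\to 0$, the two Euler solutions coalesce, and one picks up the logarithm, and bookkeeping the constants so that they depend only on $\mu$ and $d$ and not on the data or on $t$.
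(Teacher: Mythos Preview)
Your reduction is exactly the paper's: write $u_l=(1+t)^{-\mu_1/2}v_l$, read off the $\dot H^1$ bound from Theorem~\ref{thm1.0}, and for $\partial_t u_l$ split into the piece controlled by Theorem~\ref{thm1.0} and the remainder $\tfrac{|\mu_1|}{2}(1+t)^{-\mu_1/2-1}\|v_l(t)\|_{L^2}$. The only substantive ingredient beyond Theorem~\ref{thm1.0} is the $L^2$ growth estimate for $v_l$, and here you and the paper diverge.

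The paper does not prove that estimate inside the corollary; it simply invokes Lemma~\ref{lem2.1} (the B\"ohme--Reissig energy estimates), whose proof (reproduced in Appendix~A) proceeds by writing the propagator symbols $E_0,E_1$ explicitly in terms of Bessel functions $J_\nu,Y_\nu$ and bounding them pointwise in three frequency regimes. Your sketch instead derives the same $L^2$ bound by an ODE argument at each frequency: perturb off the Euler equation $\ddot\phi+\mu(1+t)^{-2}\phi=0$ in the zone $(1+t)|\xi|\le 1$, then propagate across $(1+t)|\xi|\ge 1$ using that the modified energy $E_\xi(t)=|\dot\phi|^2+(|\xi|^2+\mu(1+t)^{-2})|\phi|^2$ is non-increasing when $\mu\ge 0$. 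This is correct and genuinely more elementary, since it avoids any explicit Bessel asymptotics. One point to watch: at $\mu=1/4$ the two Euler solutions coalesce and a naive Gronwall against the logarithmic fundamental system does not quite close with a uniform constant; you need a slightly sharper bookkeeping (e.g.\ work with $\chi(\sigma)=(1+t)^{-1/2}\phi$ in the variable $\sigma=\log(1+t)$, where the equation becomes $\chi''+|\xi|^2e^{2\sigma}\chi=0$, and bound $\sup_\sigma|\chi(\sigma)|/(1+\sigma)$ by a contraction argument using that $\int_0^\sigma(\sigma-\rho)(1+\rho)e^{2\rho}\,d\rho=\tfrac{\sigma}{4}(e^{2\sigma}-1)$). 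You flag this as delicate, and it is, but it goes through.

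What the paper's route buys is more: the Bessel-function bounds on $E_0,E_1$ in Appendix~A are uniform in the initial time $t_0$, which is exactly what is needed later to prove the Strichartz estimates for $\mathcal{E}$ (Proposition~\ref{prop2.3}). Your ODE argument is tailored to $t_0=0$ and gives only what Corollary~\ref{cor1.0} requires.
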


\begin{remark}
In the sequel paper \cite{InMi20_2}, we will consider the case with rapidly decaying initial data for which the asymptotic order can be improved. 
\end{remark}





\subsubsection{Nonlinear problem}
We consider the nonlinear equation \eqref{NLDW}. By the Liouville transform $v = (1+t)^{\frac{\mu_1}{2}} u$, 
$v$ satisfies the following energy critical nonlinear Klein-Gordon equation with scale-invariant mass.
\begin{align}
\label{NLKG}
\tag{NLKG}
	\begin{cases}
	\displaystyle \partial_t^2 v - \Delta v +\frac{\mu  }{(1+t)^2} v =  \frac{\lambda}{(1+t)^{\frac{\mu_1}{2}(p_1-1)}} |v|^{p_1-1}v, & (t,x) \in (0,T) \times \mathbb{R}^d,
	\\
	(v(0),\partial_t v(0))=(v_0,v_1), &x \in \mathbb{R}^d,
	\end{cases}
\end{align}
where we set $p_1:=1+ 4/(d-2)$ and $(v_0,v_1):=(u_0, \mu_1u_0/2+u_1)$. Here, $\mu$ is same as in \eqref{KG}.

Then, we obtain the following local well-posedness. 

\begin{theorem}
\label{thm1.1}
We assume $\mu_1\geq 0$ and $\mu \geq 0$. 
Let $3\leq d \leq 6$, $T\in (0,\infty]$, and $(v_0,v_1) \in H^1 (\mathbb{R}^d) \times L^2(\mathbb{R}^d)$. Then, there exists $\delta>0$ such that 
if $\| \mathcal{E}_{0}(t,0) v_0 + \mathcal{E}_{1}(t,0) v_1\|_{L^{p_1}([0,T): L_x^{2p_1})} \leq \delta$, then there exists a unique solution to \eqref{NLKG} on $[0,T)$ in the sense of Definition \ref{def} below. 
\end{theorem}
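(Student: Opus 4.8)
The plan is to run a standard contraction-mapping / fixed-point argument for the Duhamel formulation of \eqref{NLKG}, using the linear solution operators $\mathcal{E}_0(t,s)$, $\mathcal{E}_1(t,s)$ associated with the scale-invariant Klein-Gordon equation on the left-hand side. I would first record the Duhamel formula
\begin{align*}
	v(t) = \mathcal{E}_0(t,0)v_0 + \mathcal{E}_1(t,0)v_1 + \lambda\int_0^t \mathcal{E}_1(t,s)\, (1+s)^{-\frac{\mu_1}{2}(p_1-1)} |v(s)|^{p_1-1}v(s)\, ds,
\end{align*}
and define the solution map $\Phi$ to be the right-hand side. The working space is $X = L^{p_1}([0,T);L^{2p_1}_x)$, matching the smallness hypothesis on the linear evolution. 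The first key step is a linear (homogeneous) Strichartz-type estimate: $\|\mathcal{E}_0(t,0)v_0 + \mathcal{E}_1(t,0)v_1\|_{X}$ is controlled by the data — but in fact the hypothesis already \emph{assumes} this quantity is $\le \delta$, so the only thing needed from the linear theory is the \textbf{inhomogeneous} (retarded) Strichartz estimate
\begin{align*}
	\Big\| \int_0^t \mathcal{E}_1(t,s) F(s)\, ds \Big\|_{L^{p_1}([0,T);L^{2p_1}_x)} \lesssim \|F\|_{L^{p_1'}([0,T);L^{(2p_1)'}_x)},
\end{align*}
which I expect to be available from the Strichartz estimates for \eqref{KG} established earlier in the paper (the pair $(p_1,2p_1)$ being the relevant admissible/acceptable pair for the energy-critical problem; the restriction $3\le d\le 6$ is exactly what makes $(p_1, 2p_1)$ a usable exponent pair here).

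The second key step is the nonlinear estimate. Writing $N(v) = (1+s)^{-\frac{\mu_1}{2}(p_1-1)}|v|^{p_1-1}v$, and noting that since $\mu_1\ge 0$ the decaying prefactor only helps (it is bounded by $1$), I would estimate $\|N(v)\|_{L^{p_1'}_t L^{(2p_1)'}_x}\lesssim \| |v|^{p_1} \|_{L^{p_1'}_t L^{(2p_1)'}_x} = \|v\|_{X}^{p_1}$, using Hölder in $x$ (since $p_1\cdot (2p_1)' $ relates to $2p_1$ appropriately: $(2p_1)' \cdot p_1 = 2p_1$ up to the exponent bookkeeping) and in $t$ ($p_1' \cdot p_1 = $ the right power, again requiring $p_1 \ge 2$, i.e. $d\le 6$). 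Combining with the inhomogeneous estimate gives $\|\Phi(v)\|_X \le \delta + C\|v\|_X^{p_1}$, so $\Phi$ maps the ball $\{\|v\|_X \le 2\delta\}$ (say) into itself once $\delta$ is small enough that $C(2\delta)^{p_1}\le \delta$. For the difference estimate one uses the pointwise bound $||a|^{p_1-1}a - |b|^{p_1-1}b| \lesssim (|a|^{p_1-1}+|b|^{p_1-1})|a-b|$ together with Hölder to get $\|\Phi(v)-\Phi(w)\|_X \lesssim \delta^{p_1-1}\|v-w\|_X$, giving a contraction for $\delta$ small; the Banach fixed point theorem then yields the unique solution in the ball, and a standard argument upgrades uniqueness to the full class in Definition \ref{def}.

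The step I expect to be the main obstacle is the inhomogeneous Strichartz estimate with the \emph{non-endpoint} pair $(p_1, 2p_1)$ for the time-dependent-mass operator: because the equation is not time-translation invariant, the usual $TT^*$ / Christ--Kiselev machinery must be applied to the two-parameter propagator $\mathcal{E}_1(t,s)$, and one must check that the relevant dispersive or fixed-time bounds for \eqref{KG} (presumably proved earlier via the explicit representation in terms of Bessel-type functions / the $\nu$ parameter) are uniform enough in $s$ to survive the time integration on $[0,T)$ with $T$ possibly infinite. Once that linear input is in hand, the nonlinear part is routine energy-critical bookkeeping, with the only genuine constraint being $3\le d\le 6$ so that $p_1 = 1+\tfrac{4}{d-2} \ge 2$ and the Hölder exponents close; the hypotheses $\mu_1\ge 0$, $\mu\ge 0$ ensure the damping/mass prefactors are harmless and that the linear Strichartz theory of Theorem \ref{thm1.0}'s setting applies.
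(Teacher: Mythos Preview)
Your overall scheme---contraction mapping for the Duhamel map $\Phi$ in the space $X=L^{p_1}_t([0,T);L^{2p_1}_x)$---is exactly the paper's approach, and you correctly identify why $3\le d\le 6$ enters (it makes $(p_1,2p_1)$ an admissible pair with $\gamma=1$, in particular $p_1\ge 2$). The gap is in the inhomogeneous step. The estimate you write,
\[
\Big\| \int_0^t \mathcal{E}_1(t,s) F(s)\, ds \Big\|_{L^{p_1}_t L^{2p_1}_x} \lesssim \|F\|_{L^{p_1'}_t L^{(2p_1)'}_x},
\]
does \emph{not} hold without a derivative on $F$: both $(p_1,2p_1)$ and its ``dual'' pair sit at derivative loss $\gamma=\tilde\gamma=1$, so the retarded estimate would require $|\nabla|^{\gamma+\tilde\gamma-1}F=|\nabla|F$ on the right. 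Moreover, even if it did hold, your H\"older bookkeeping fails: $p_1\cdot(2p_1)'=2p_1$ forces $p_1=1$, and $p_1\cdot p_1'=p_1$ forces $p_1=\infty$, so $\|\,|v|^{p_1-1}v\,\|_{L^{p_1'}_tL^{(2p_1)'}_x}$ does not reduce to $\|v\|_X^{p_1}$.

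The paper avoids all of this---and the Christ--Kiselev issue you raise---by a much simpler route. One uses Minkowski and the \emph{homogeneous} Strichartz bound of Proposition~\ref{prop2.3}, which is proved to hold \emph{uniformly in the initial time $t_0$}:
\[
\Big\|\int_0^t \mathcal{E}_1(t,s)F(s)\,ds\Big\|_{X}
\le \int_0^T \|\mathcal{E}_1(\cdot,s)F(s)\|_{X}\,ds
\lesssim \int_0^T \|F(s)\|_{L^2_x}\,ds
= \|F\|_{L^1_tL^2_x}.
\]
With $F(s)=(1+s)^{-2\mu_1/(d-2)}|v(s)|^{p_1-1}v(s)$ and $\mu_1\ge 0$, this gives
\[
\|\Phi(v)\|_X \le \delta + C\|\,|v|^{p_1-1}v\,\|_{L^1_tL^2_x}
= \delta + C\|v\|_X^{p_1},
\]
since $\|\,|v|^{p_1}\|_{L^2_x}=\|v\|_{L^{2p_1}_x}^{p_1}$ and $(\|v\|_{L^{2p_1}_x}^{p_1})\in L^1_t$ exactly when $v\in L^{p_1}_tL^{2p_1}_x$. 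The difference estimate is identical, and the contraction closes as you describe. So the ``main obstacle'' you anticipated is not present: the needed linear input is just Proposition~\ref{prop2.3} (homogeneous, with $t_0$-independent constants), and no genuine two-parameter inhomogeneous Strichartz for $\mathcal{E}_1$ is required.
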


By this local well-posedness, we can show the small data global existence. 

\begin{corollary}
\label{cor1.4}
We assume $\mu_1\geq 0$ and $\mu \geq 0$. 
Let $3\leq d \leq 6$. 
If $\| v_0 \|_{H^1}+\| v_1 \|_{L^2}$ is sufficiently small, then there exists a unique global solution and it satisfies $\| v \|_{L^{p_1}([0,\infty): L_x^{2p_1})} \cleq \| v_0 \|_{H^1}+\| v_1 \|_{L^2}$. 
\end{corollary}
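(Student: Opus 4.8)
The plan is to deduce the small-data global existence from the local well-posedness in Theorem \ref{thm1.1} by taking $T=\infty$, so the whole task reduces to verifying that the smallness hypothesis $\|\cE_0(t,0)v_0+\cE_1(t,0)v_1\|_{L^{p_1}([0,\infty);L_x^{2p_1})}\le\delta$ on the linear evolution is implied by smallness of $\|v_0\|_{H^1}+\|v_1\|_{L^2}$. Concretely, I would first recall (from the earlier sections constructing $\cE_0,\cE_1$ and the associated Strichartz estimates) that the pair $(q,r)=(p_1,2p_1)=(\tfrac{d+2}{d-2},\tfrac{2(d+2)}{d-2})$ is an admissible Strichartz exponent for the free wave equation in dimension $d$ at regularity $\dot H^1\times L^2$; this is exactly the scale-invariant "energy-critical" Strichartz pair, and it is admissible precisely in the range $3\le d\le 6$ (the upper restriction $d\le 6$ is what forces $r<\infty$). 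Then the global-in-time Strichartz estimate for the modified propagators $\cE_0,\cE_1$ — which must already have been established in the body of the paper, since Theorem \ref{thm1.1} is phrased in terms of exactly this norm — gives
\begin{align*}
\|\cE_0(t,0)v_0+\cE_1(t,0)v_1\|_{L^{p_1}([0,\infty);L_x^{2p_1})}\le C\bigl(\|v_0\|_{\dot H^1}+\|v_1\|_{L^2}\bigr)\le C\bigl(\|v_0\|_{H^1}+\|v_1\|_{L^2}\bigr).
\end{align*}
Hence if $\|v_0\|_{H^1}+\|v_1\|_{L^2}\le\delta/C$, the hypothesis of Theorem \ref{thm1.1} holds with $T=\infty$, and we obtain a unique global solution $v$ on $[0,\infty)$ in the sense of Definition \ref{def}.

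Next I would extract the quantitative bound on $\|v\|_{L^{p_1}([0,\infty);L_x^{2p_1})}$. This is the standard continuity/bootstrap argument inside the contraction scheme: the solution is the fixed point of the Duhamel map $\Phi(v)=\cE_0(t,0)v_0+\cE_1(t,0)v_1+\int_0^t \cE_1(t,s)\bigl[(1+s)^{-\frac{\mu_1}{2}(p_1-1)}\lambda|v|^{p_1-1}v\bigr]\,ds$, and the contraction is set up on a ball of radius $\sim\delta$ in the Strichartz space $L^{p_1}_tL^{2p_1}_x$. Applying the Strichartz estimate to the Duhamel term together with the nonlinear estimate $\||v|^{p_1-1}v\|_{L^{p_1'}_tL^{(2p_1)'}_x}\lesssim\|v\|_{L^{p_1}_tL^{2p_1}_x}^{p_1}$ — here the decaying factor $(1+s)^{-\frac{\mu_1}{2}(p_1-1)}$ only helps since $\mu_1\ge0$ — yields $\|v\|_{L^{p_1}_tL^{2p_1}_x}\le C(\|v_0\|_{H^1}+\|v_1\|_{L^2})+C\|v\|_{L^{p_1}_tL^{2p_1}_x}^{p_1}$, and absorbing the superlinear term for $\|v\|$ small gives $\|v\|_{L^{p_1}([0,\infty);L_x^{2p_1})}\lesssim\|v_0\|_{H^1}+\|v_1\|_{L^2}$. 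Uniqueness is inherited from Theorem \ref{thm1.1}.

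The only genuine content beyond invoking Theorem \ref{thm1.1} is the \emph{global-in-time} Strichartz bound for the free (and modified) propagators at the endpoint-type exponent $(p_1,2p_1)$ — i.e. that the constant does not blow up as $T\to\infty$. I expect this to be the main point to check, and the place where the hypotheses $\mu\ge0$ and $3\le d\le6$ really enter: $\mu\ge0$ guarantees that the perturbative mass term $\mu(1+t)^{-2}v$ does not destroy the dispersive decay (this should follow from the spectral/ODE analysis of $\cE_0,\cE_1$ carried out earlier, e.g. via Bessel-function asymptotics controlled by $\re\nu$), and $3\le d\le6$ is needed for $(p_1,2p_1)$ to be a wave-admissible pair with finite spatial exponent. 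If the global Strichartz estimate is already recorded as a lemma in the preceding sections — which the statement of Theorem \ref{thm1.1} strongly suggests — then the corollary is essentially immediate from the two displays above.
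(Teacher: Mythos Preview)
Your proposal is correct and follows essentially the same approach as the paper: invoke the global-in-time Strichartz estimate for $\mathcal{E}_0,\mathcal{E}_1$ (this is Proposition~\ref{prop2.3} in the paper) to bound the linear evolution by $C(\|v_0\|_{H^1}+\|v_1\|_{L^2})$, then apply Theorem~\ref{thm1.1} with $T=\infty$. Two small remarks: (i) the intermediate bound with $\|v_0\|_{\dot H^1}$ is not quite right when $\mu>0$, since Proposition~\ref{prop2.3} gives $\|\mathcal{E}_0(t,0)v_0\|\lesssim \|v_0\|_{\dot H^1}+\mu\|v_0\|_{L^2}$, but your final $H^1$ bound is of course correct; (ii) the paper handles the Duhamel term via the $L^1_tL^2_x$ inhomogeneous estimate rather than the dual pair $L^{p_1'}_tL^{(2p_1)'}_x$ you wrote, though both routes yield the same nonlinear bound $\|v\|_{L^{p_1}_tL^{2p_1}_x}^{p_1}$.
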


Moreover, we obtain the scattering result and the asymptotic order as follows. 

\begin{theorem}
\label{thm1.3}
Let $\mu_1 > 0$, $\mu \geq 0$ and $3\leq d \leq 5$. 
If $v$ is a global solution  to \eqref{NLKG} in the sense of Definition \ref{def} and $v$ satisfies $\| v \|_{L^{p_1}([0,\infty): L_x^{2p_1})} <\infty$, then there exists $\overrightarrow{v_{+}} \in \dot{H}^1 \times L^2$ such that
\begin{align*}
	&\norm{\overrightarrow{v}(t) - \mathcal{W}(t)\overrightarrow{v_{+}}}_{\dot{H}^1 \times L^2}
	\\
	&\cleq   
	\begin{cases}
	\mu(1+t)^{\max\{ -\frac{1}{2} + \re\nu, -\frac{2\mu_1}{d-2}\} } + (1+t)^{-\frac{2\mu_1}{d-2}} o_t(1), & \mu \neq 1/4,
	\\
	\mu(1+t)^{\max\{-\frac{1}{2},-\frac{2\mu_1}{d-2}\}}(1+\log(1+t)) +(1+t)^{-\frac{2\mu_1}{d-2}}o_t(1), & \mu = 1/4,
	\end{cases}
\end{align*}
where $o_t(1)=\| v \|_{L^{p_1}([t,\infty): L_x^{2p_1})}$, which goes to $0$ as $t \to \infty$, and $\overrightarrow{v}(t)=(v(t),\partial_t v(t))$.
\end{theorem}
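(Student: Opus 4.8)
The plan is to exploit the linearity of the Klein--Gordon operator $P_\mu:=\partial_t^2-\Delta+\frac{\mu}{(1+t)^2}$ and to split $v=v_l+v_{nl}$: writing $F(s):=\frac{\lambda}{(1+s)^{2\mu_1/(d-2)}}|v(s)|^{p_1-1}v(s)$, the Duhamel formula of Definition \ref{def} reads $\overrightarrow{v}(t)=\overrightarrow{v_l}(t)+\overrightarrow{v_{nl}}(t)$, where $\overrightarrow{v_l}$ is the linear \eqref{KG}-evolution of $(v_0,v_1)$ and $\overrightarrow{v_{nl}}(t)=\int_0^t(\mathcal{E}_1(t,s)F(s),\ \partial_t\mathcal{E}_1(t,s)F(s))\,ds$. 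Since $\mathcal{W}(-t)$ is an isometry of $\dot H^1\times L^2$, it suffices to produce $\overrightarrow{v_{l,+}}:=\lim_{t\to\infty}\mathcal{W}(-t)\overrightarrow{v_l}(t)$ and $\overrightarrow{v_{nl,+}}:=\lim_{t\to\infty}\mathcal{W}(-t)\overrightarrow{v_{nl}}(t)$ in $\dot H^1\times L^2$, set $\overrightarrow{v_+}:=\overrightarrow{v_{l,+}}+\overrightarrow{v_{nl,+}}$, and estimate $\|\overrightarrow{v_l}(t)-\mathcal{W}(t)\overrightarrow{v_{l,+}}\|$ and $\|\overrightarrow{v_{nl}}(t)-\mathcal{W}(t)\overrightarrow{v_{nl,+}}\|$ separately. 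For $v_l$ this is exactly Theorem \ref{thm1.0} applied with $(v_{l,0},v_{l,1})=(v_0,v_1)$, which supplies $\overrightarrow{v_{l,+}}$ and the bound $\cleq\mu(1+t)^{-\frac12+\re\nu}$ (with the logarithmic factor when $\mu=1/4$) --- this is the $\mu(1+t)^{-\frac12+\re\nu}$ summand in the claim. Moreover the energy identity for $P_\mu$ (using $\mu\ge0$) gives $\sup_{t\ge0}\|\overrightarrow{v}(t)\|_{\dot H^1\times L^2}\cleq\|v_0\|_{H^1}+\|v_1\|_{L^2}+\int_0^\infty\|F(s)\|_{L^2}\,ds$, and since $\|F(s)\|_{L^2}=(1+s)^{-\frac{2\mu_1}{d-2}}\|v(s)\|_{L_x^{2p_1}}^{p_1}\le\|v(s)\|_{L_x^{2p_1}}^{p_1}$ the last integral is $\le\|v\|_{L^{p_1}([0,\infty):L_x^{2p_1})}^{p_1}<\infty$; set $M:=\sup_t\|\nabla v(t)\|_{L^2}<\infty$.

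For the Duhamel term, fix $s$ and let $w_s(\tau):=\mathcal{E}_1(\tau,s)F(s)$, which solves the free wave equation with forcing $-\frac{\mu}{(1+\tau)^2}w_s$ and data $(0,F(s))$ at $\tau=s$. Applying $\mathcal{W}(-\tau)$ to its wave-Duhamel formula and using that $\mathcal{W}$ is a group,
\begin{align*}
	\mathcal{W}(-t)\bigl(w_s(t),\partial_t w_s(t)\bigr)=\mathcal{W}(-s)\bigl(0,F(s)\bigr)-\int_s^t\mathcal{W}(-\tau)\Bigl(0,\tfrac{\mu}{(1+\tau)^2}w_s(\tau)\Bigr)\,d\tau .
\end{align*}
The integrand here has $\dot H^1\times L^2$ norm $\frac{\mu}{(1+\tau)^2}\|\mathcal{E}_1(\tau,s)F(s)\|_{L^2}\cleq\frac{\mu}{(1+\tau)^2}\|F(s)\|_{\dot H^{-1}}$ by the $L^2$--$\dot H^{-1}$ smoothing estimate $\|\mathcal{E}_1(\tau,s)g\|_{L^2}\cleq\|g\|_{\dot H^{-1}}$ for the \eqref{KG}-propagator (for $\mu=0$ this is $\|\tfrac{\sin(\tau|\nabla|)}{|\nabla|}g\|_{L^2}\le\|g\|_{\dot H^{-1}}$; in general I would read it off from the Bessel-function representation of $\mathcal{E}_0,\mathcal{E}_1$ underlying Theorem \ref{thm1.0}). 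Hence the $\tau$-integral converges absolutely in $\dot H^1\times L^2$ as $t\to\infty$; writing $\Phi_s$ for the limit of the left-hand side, one gets $\|\Phi_s\|_{\dot H^1\times L^2}\cleq\|F(s)\|_{L^2}+\frac{\mu}{1+s}\|F(s)\|_{\dot H^{-1}}$ and $\|\mathcal{W}(-t)(w_s(t),\partial_t w_s(t))-\Phi_s\|_{\dot H^1\times L^2}\cleq\frac{\mu}{1+t}\|F(s)\|_{\dot H^{-1}}$ for $t\ge s$. Integrating in $s$ and setting $\overrightarrow{v_{nl,+}}:=\int_0^\infty\Phi_s\,ds$,
\begin{align*}
	\|\overrightarrow{v_{nl}}(t)-\mathcal{W}(t)\overrightarrow{v_{nl,+}}\|_{\dot H^1\times L^2}\le\int_t^\infty\|\Phi_s\|_{\dot H^1\times L^2}\,ds+\frac{\mu}{1+t}\int_0^t\|F(s)\|_{\dot H^{-1}}\,ds .
\end{align*}

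It remains to estimate the two norms of $F(s)$. From $\|F(s)\|_{L^2}=(1+s)^{-\frac{2\mu_1}{d-2}}\|v(s)\|_{L_x^{2p_1}}^{p_1}$ we get $\int_t^\infty\|F(s)\|_{L^2}\,ds\le(1+t)^{-\frac{2\mu_1}{d-2}}\|v\|_{L^{p_1}([t,\infty):L_x^{2p_1})}^{p_1}\cleq(1+t)^{-\frac{2\mu_1}{d-2}}o_t(1)$ for $t$ large (recall $o_t(1)=\|v\|_{L^{p_1}([t,\infty):L_x^{2p_1})}\to0$ and $p_1>1$). For the other norm, Sobolev embedding $L^{\frac{2d}{d+2}}\hookrightarrow\dot H^{-1}$ together with $\dot H^1\hookrightarrow L^{p_1+1}$ and the bound $M$ give $\|F(s)\|_{\dot H^{-1}}\cleq(1+s)^{-\frac{2\mu_1}{d-2}}\|v(s)\|_{L^{p_1+1}}^{p_1}\cleq M^{p_1}(1+s)^{-\frac{2\mu_1}{d-2}}$. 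Since $\mu_1>0$, this makes $\int_0^\infty(1+s)^{-1}\|F(s)\|_{\dot H^{-1}}\,ds<\infty$ (so $\overrightarrow{v_{nl,+}}$ is a well-defined element of $\dot H^1\times L^2$), yields $\int_t^\infty\|\Phi_s\|\,ds\cleq(1+t)^{-\frac{2\mu_1}{d-2}}o_t(1)+\mu(1+t)^{-\frac{2\mu_1}{d-2}}$, and yields $\frac{\mu}{1+t}\int_0^t\|F(s)\|_{\dot H^{-1}}\,ds\cleq\mu(1+t)^{-\frac{2\mu_1}{d-2}}$ when $\frac{2\mu_1}{d-2}<1$ and $\cleq\mu(1+t)^{-1}(1+\log(1+t))$ otherwise; in the latter regime $-\frac{2\mu_1}{d-2}\le-1<-\frac12+\re\nu$, so that term is absorbed by the $v_l$ bound. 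Hence $\|\overrightarrow{v_{nl}}(t)-\mathcal{W}(t)\overrightarrow{v_{nl,+}}\|\cleq\mu(1+t)^{-\frac{2\mu_1}{d-2}}+(1+t)^{-\frac{2\mu_1}{d-2}}o_t(1)$ up to terms already controlled by the $v_l$ estimate, and adding the $v_l$ estimate and using $(1+t)^a+(1+t)^b\ceq(1+t)^{\max\{a,b\}}$ gives precisely the stated inequality (with $\re\nu$ replaced by $0$ and an extra factor $1+\log(1+t)$ on the $\mu$-term when $\mu=1/4$).

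The step I expect to be the main obstacle is the uniform-in-$s$ comparison of the \eqref{KG}-propagator with the free wave flow, i.e. the smoothing estimate $\|\mathcal{E}_1(\tau,s)g\|_{L^2}\cleq\|g\|_{\dot H^{-1}}$ for $\tau\ge s\ge0$, together with the necessity of measuring the nonlinear forcing $F(s)$ in two different norms --- $L^2$, which is summed against the finite scattering norm, and $\dot H^{-1}$, which carries the decaying weight $(1+\tau)^{-2}$ coming from the scale-invariant mass --- in such a way that all the time integrals converge; this is exactly where the explicit (Bessel-function) description of $\mathcal{E}_0,\mathcal{E}_1$ from the linear theory and the decay prefactor $(1+s)^{-2\mu_1/(d-2)}$ in \eqref{NLKG} both enter, and where the hypothesis $\mu_1>0$ is essential (for $\mu_1=0$ the nonlinearity carries no decay and the method yields no rate). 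The remaining inputs --- the energy identity for $P_\mu$ with the resulting uniform $\dot H^1\times L^2$ bound, and the existence of the solution in the sense of Definition \ref{def} with finite scattering norm, for which $3\le d\le5$ is used --- are either standard or come from the earlier parts of the paper.
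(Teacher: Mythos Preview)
Your approach is correct but takes a genuinely different route from the paper, and it is worth comparing the two.

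\textbf{The paper's argument.} The paper does \emph{not} split $v=v_l+v_{nl}$. Instead, it first invokes Appendix~\ref{appB} (this is where the restriction $d\le 5$ enters) to pass from the $\mathcal{E}$-integral equation of Definition~\ref{def} to the \emph{wave} integral equation for the full nonlinear $v$:
\[
v(t)=\mathcal{W}_0(t)v_0+\mathcal{W}_1(t)v_1+\int_0^t\mathcal{W}_1(t-s)\Bigl(-\tfrac{\mu}{(1+s)^2}v(s)+F(s)\Bigr)\,ds .
\]
From here the scattering state and the rate follow in one stroke: the tail $\int_t^\infty\|F(s)\|_{L^2}\,ds$ gives the $(1+t)^{-\frac{2\mu_1}{d-2}}o_t(1)$ term exactly as you found, while the mass term is controlled by a single $L^2$ bound on the \emph{nonlinear} solution (Corollary~\ref{cor3.1}),
\[
\|v(t)\|_{L^2}\lesssim (1+t)^{\max\{\frac12+\re\nu,\,1-\frac{2\mu_1}{d-2}\}},
\]
which is obtained from Lemma~\ref{lem2.1} applied to the $\mathcal{E}$-Duhamel formula. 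Plugging this into $\mu\int_t^\infty(1+s)^{-2}\|v(s)\|_{L^2}\,ds$ produces the $\mu(1+t)^{\max\{-\frac12+\re\nu,\,-\frac{2\mu_1}{d-2}\}}$ term directly, with no further case analysis.

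\textbf{What you do differently.} You stay with the $\mathcal{E}$-formulation, peel off $v_l$ via Theorem~\ref{thm1.0}, and for each $s$ expand $w_s=\mathcal{E}_1(\cdot,s)F(s)$ again as a wave Duhamel. The ``obstacle'' you flag---the uniform smoothing bound $\|\mathcal{E}_1(\tau,s)g\|_{L^2}\lesssim\|g\|_{\dot H^{-1}}$---is in fact true for $\mu\ge 0$: the estimates of $E_1$ in Lemma~\ref{lemA.8} combine (as in the Remark following it) to give $|E_1(t,t_0,\xi)|\lesssim|\xi|^{-1}$ uniformly in $t\ge t_0\ge 0$. With this in hand your nested estimate goes through and recovers the stated rate, including the absorption of the $\frac{\mu}{1+t}\int_0^t\|F\|_{\dot H^{-1}}$ term when $\frac{2\mu_1}{d-2}\ge 1$ (since $-1<-\tfrac12+\re\nu$).

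\textbf{Trade-offs.} The paper's route is shorter and uses only the $L^2$--$L^2$ estimate for $\mathcal{E}_1$ already recorded in Lemma~\ref{lem2.1}; the price is the detour through Appendix~\ref{appB} to justify the wave Duhamel for the nonlinear $v$, which is where $d\le 5$ is used. Your route avoids Appendix~\ref{appB} entirely (the wave Duhamel you need is only for the \emph{linear} flow $w_s$, i.e.\ formula~\eqref{eq2.3}), at the cost of an extra smoothing estimate for $\mathcal{E}_1$ and the introduction of the $\dot H^{-1}$ norm of $F$ via Sobolev. A side benefit of your argument is that, in principle, it does not see the $d\le 5$ restriction at all.
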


\begin{remark}
\ 
\begin{enumerate}
\item Theorem \ref{thm1.3} means that the solution to \eqref{NLKG} with the finite space-time norm behaves like the free wave equation at infinite time. We also obtain the asymptotic order $\max\{ -\frac{1}{2} + \re\nu, -\frac{2\mu_1}{d-2}\}$. 
As seen in the linear case, the order $-\frac{1}{2} + \re\nu$ comes from the linear part. On the other hand, 
the order $-\frac{2\mu_1}{d-2}$ comes from the decay in front of the nonlinearity in \eqref{NLKG}. 
\item Theorem \ref{thm1.3} covers the first author's result \cite{Inu18}, in which he considered the case $\mu=0$. If $\mu >0$, then we could not get decay term $o_t(1)$, that is, it is not clear whether $\lim_{t \to \infty} (1+t)^{-\max\{ -\frac{1}{2} + \re\nu, -\frac{2\mu_1}{d-2}\} } \norm{\overrightarrow{v}(t) - \mathcal{W}(t)\overrightarrow{v_{+}}}_{\dot{H}^1 \times L^2} =0$ or not.
\item In Theorem \ref{thm1.1}, we assume $d \leq 6$ to use the Strichartz estimate in Proposition \ref{prop2.3}. On the other hand, we exclude $d=6$ in Theorem \ref{thm1.3} since we use the smoothness of the nonlinearity to obtain scattering. See Appendix \ref{appB}. 
\item Our proof is also applicable to the nonlinearity $\lambda |v|^{1+4/(d-2)}$. 
\end{enumerate}
\end{remark}


By retransforming $u=(1+t)^{-\mu_1/2}v$, we have the following result for \eqref{NLDW}. 
\begin{corollary}
\label{cor1.6}
Let $\mu_1> 0$, $\mu \geq 0$ and $3\leq d \leq 5$. If $u$ is a global solution  to \eqref{NLDW} and $u$ satisfies $\| (1+t)^{\mu_1/2}u \|_{L^{p_1}([0,\infty): L_x^{2p_1})} <\infty$, then there exists $\overrightarrow{v_{+}} \in \dot{H}^1 \times L^2$ such that
\begin{align*}
	&\norm{u(t) -(1+t)^{-\frac{\mu_1}{2}} (\mathcal{W}(t)\overrightarrow{v_{+}} )_1}_{\dot{H}^1}
	\\
	&\cleq  
	\begin{cases}
	\mu(1+t)^{\max\{ -\frac{1}{2} + \re\nu, -\frac{2\mu_1}{d-2}\}-\frac{\mu_1}{2} } + (1+t)^{-\frac{2\mu_1}{d-2}-\frac{\mu_1}{2}} o_t(1), & \mu \neq 1/4,
	\\
	\mu(1+t)^{\max\{-\frac{1}{2},-\frac{2\mu_1}{d-2}\}-\frac{\mu_1}{2}}(1+\log(1+t)) +(1+t)^{-\frac{2\mu_1}{d-2}-\frac{\mu_1}{2}}o_t(1), & \mu = 1/4,
	\end{cases}
\end{align*}
and 
\begin{align*}
	&\norm{\partial_t u(t) -(1+t)^{-\frac{\mu_1}{2}} (\mathcal{W}(t)\overrightarrow{v_{+}})_2 }_{L^2}
	\\
	&\cleq 
	\begin{cases}
	(\mu+\mu_1)(1+t)^{\max\{ -\frac{1}{2} + \re\nu, -\frac{2\mu_1}{d-2}\}-\frac{\mu_1}{2} } + (1+t)^{-\frac{2\mu_1}{d-2}-\frac{\mu_1}{2}} o_t(1), & \mu \neq 1/4,
	\\
	(\mu+\mu_1)(1+t)^{\max\{-\frac{1}{2},-\frac{2\mu_1}{d-2}\}-\frac{\mu_1}{2}}(1+\log(1+t)) +(1+t)^{-\frac{2\mu_1}{d-2}-\frac{\mu_1}{2}}o_t(1), & \mu = 1/4,
	\end{cases}
\end{align*}
where $o_t(1)=\| (1+t)^{\mu_1/2}u \|_{L^{p_1}([t,\infty): L_x^{2p_1})}$, which goes to $0$ as $t \to \infty$
\end{corollary}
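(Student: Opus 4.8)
The plan is to deduce Corollary \ref{cor1.6} directly from Theorem \ref{thm1.3} via the Liouville substitution $v:=(1+t)^{\mu_1/2}u$. Indeed $v$ solves \eqref{NLKG} (the computation given just before Theorem \ref{thm1.1}), and the hypothesis $\norm{(1+t)^{\mu_1/2}u}_{L^{p_1}([0,\infty):L_x^{2p_1})}<\infty$ is literally $\norm{v}_{L^{p_1}([0,\infty):L_x^{2p_1})}<\infty$. Hence Theorem \ref{thm1.3} supplies $\overrightarrow{v_{+}}\in\dot H^1\times L^2$ with the decay stated there for $\norm{\overrightarrow{v}(t)-\mathcal{W}(t)\overrightarrow{v_{+}}}_{\dot H^1\times L^2}$; this is the $\overrightarrow{v_{+}}$ in the corollary, and the two occurrences of $o_t(1)$ coincide since $o_t(1)=\norm{v}_{L^{p_1}([t,\infty):L_x^{2p_1})}$.

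Since $\nabla$ commutes with multiplication by $(1+t)^{-\mu_1/2}$, the first inequality is immediate:
\[
\norm{u(t)-(1+t)^{-\frac{\mu_1}{2}}(\mathcal{W}(t)\overrightarrow{v_{+}})_1}_{\dot H^1}=(1+t)^{-\frac{\mu_1}{2}}\norm{v(t)-(\mathcal{W}(t)\overrightarrow{v_{+}})_1}_{\dot H^1}\leq(1+t)^{-\frac{\mu_1}{2}}\norm{\overrightarrow{v}(t)-\mathcal{W}(t)\overrightarrow{v_{+}}}_{\dot H^1\times L^2},
\]
into which one inserts the bound of Theorem \ref{thm1.3}. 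For the second inequality, differentiating $u=(1+t)^{-\mu_1/2}v$ gives $\partial_t u=(1+t)^{-\mu_1/2}\partial_t v-\frac{\mu_1}{2}(1+t)^{-\mu_1/2-1}v$, so that
\[
\partial_t u(t)-(1+t)^{-\frac{\mu_1}{2}}(\mathcal{W}(t)\overrightarrow{v_{+}})_2=(1+t)^{-\frac{\mu_1}{2}}\left(\partial_t v(t)-(\mathcal{W}(t)\overrightarrow{v_{+}})_2\right)-\frac{\mu_1}{2}(1+t)^{-\frac{\mu_1}{2}-1}v(t).
\]
The first summand has $L^2$-norm at most $(1+t)^{-\mu_1/2}\norm{\overrightarrow{v}(t)-\mathcal{W}(t)\overrightarrow{v_{+}}}_{\dot H^1\times L^2}$, which by Theorem \ref{thm1.3} is precisely the $\mu(\cdots)+(\cdots)o_t(1)$ part of the claimed bound (the factor $(1+t)^{-\mu_1/2}$ shifting the exponents by $-\mu_1/2$). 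Everything thus reduces to an a priori bound on $\norm{v(t)}_{L^2}$: it suffices to prove $\norm{v(t)}_{L^2}\lesssim(1+t)^{1+\max\{-\frac12+\re\nu,\,-\frac{2\mu_1}{d-2}\}}$ when $\mu\neq1/4$ (resp.\ with an extra factor $1+\log(1+t)$ when $\mu=1/4$), for then $\frac{\mu_1}{2}(1+t)^{-\mu_1/2-1}\norm{v(t)}_{L^2}$ is dominated by the $\mu_1(\cdots)$ part of the right-hand side (note $-\frac{2\mu_1}{d-2}\leq\max\{-\frac12+\re\nu,-\frac{2\mu_1}{d-2}\}$, so even a nondecaying contribution would be absorbed).

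I would obtain this $L^2$-bound from Duhamel's formula $v=v_{\mathrm{lin}}+\int_0^t\mathcal{E}_1(t,s)\frac{\lambda}{(1+s)^{2\mu_1/(d-2)}}|v(s)|^{p_1-1}v(s)\,ds$, where $v_{\mathrm{lin}}$ is the \eqref{KG}-solution with data $(v_0,v_1)$. For $v_{\mathrm{lin}}$ the substitution $v_{\mathrm{lin}}=(1+t)^{1/2+\nu}w$ turns \eqref{KG} into the (possibly complex-coefficient) damped wave equation $\partial_t^2 w-\Delta w+\frac{1+2\nu}{1+t}\partial_t w=0$, and a real-part energy identity together with a low-frequency ODE analysis — or Wirth's $L^2$-$L^2$ bound \cite{Wir04} when $0\leq\mu<1/4$ — yields $\norm{w(t)}_{L^2}\lesssim\norm{v_0}_{H^1}+\norm{v_1}_{L^2}$ for $\mu\neq1/4$ and only a logarithmic loss at $\mu=1/4$, whence $\norm{v_{\mathrm{lin}}(t)}_{L^2}\lesssim(1+t)^{1/2+\re\nu}(\cdots)$, which lies within the target. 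For the nonlinear Duhamel term — here $\mu_1>0$ makes the weight $(1+s)^{-2\mu_1/(d-2)}$ genuinely decaying — I would use the $L^2$ mapping properties of $\mathcal{E}_1(t,s)$ underlying Proposition \ref{prop2.3}, together with the finiteness and tail-smallness of $\norm{v}_{L^{p_1}([0,\infty):L_x^{2p_1})}$, to bound it by $(1+t)^{1-2\mu_1/(d-2)}o_t(1)$, which also fits the target. Adding the two pieces gives the desired $L^2$-bound, and retransforming $u=(1+t)^{-\mu_1/2}v$ then finishes the proof.

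The main obstacle is precisely this sharp bound on $\norm{v(t)}_{L^2}$. Unlike $\norm{\overrightarrow{v}(t)}_{\dot H^1\times L^2}$, this quantity is governed by no conserved or monotone energy — already for the free wave it grows essentially linearly — so the rate $(1+t)^{1/2+\re\nu}$ for the scale-invariant Klein-Gordon flow has to be read off from the structure of \eqref{KG}, and this is where the hypothesis $\mu\geq0$ enters: it forces $\re\nu<1/2$ when $\mu>0$, the borderline case $\mu=1/4$ being responsible for the logarithmic factor. The other delicate point is to track the time weights carried by $\mathcal{E}_1(t,s)$ carefully enough that the nonlinear Duhamel contribution does not beat the exponent $1-\frac{2\mu_1}{d-2}$.
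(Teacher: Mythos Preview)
Your approach is correct and matches the paper's: apply the Liouville transform $v=(1+t)^{\mu_1/2}u$, invoke Theorem~\ref{thm1.3}, and handle the extra term $\tfrac{\mu_1}{2}(1+t)^{-\mu_1/2-1}v$ in $\partial_t u$ via an $L^2$-bound on $v$---this is exactly the argument of Corollary~\ref{cor1.0} transplanted to the nonlinear setting. The one inefficiency is that you rederive the $L^2$-bound on $v$ from scratch: that bound is precisely Corollary~\ref{cor3.1} (which in turn rests on Lemma~\ref{lem2.1}, the B\"ohme--Reissig estimate proved via Bessel functions in the appendix), so you should simply cite it rather than going through the substitution $v_{\mathrm{lin}}=(1+t)^{1/2+\nu}w$---that route is more delicate than it looks when $\mu\geq 1/4$, since the damping becomes complex and a pure energy identity only gives linear growth of $\|w\|_{L^2}$, forcing you back to the low-frequency Bessel/ODE analysis anyway.
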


\subsection{Idea of the proofs}

Our proof for the linear equation \eqref{KG} is based on the two expressions of the equation \eqref{KG}. One is to use the linear propagator $\mathcal{E}$. The other is to regard it as the wave equation with the inhomogeneous term $\mu(1+t)^{-2}v_l$. It is known that the $L^2$-norm of the free wave equation may grow like $1+t$. Then,  the $L^2$-norm of the inhomogeneous term $\mu(1+t)^{-2}v_l$ is not integrable for $t$. This is the main difficulty. To overcome this difficulty, we use the expression by the solution map $\mathcal{E}$. By using this formulation, B\"{o}hme and Reissig \cite{BoRe12} showed $v_l$ satisfies 
\begin{align*}
	\| v_l(t) \|_{L^2} \cleq 
	\begin{cases}
	(1+t)^{\frac{1}{2}+\re \nu}, & \mu \neq \frac{1}{4},
	\\
	(1+t)^{\frac{1}{2}} (1+\log(1+t)),& \mu =\frac{1}{4}.
	\end{cases}
\end{align*}
The order $\frac{1}{2}+\re \nu$ is less than $1$. Therefore, in fact,  $\mu(1+t)^{-2}v_l$ is integrable for $t$. This is the key point in the present  paper. 

Next we show the idea of the nonlinear problem. In the previous paper \cite{Inu18} by the first author, who considered the case $\mu=0$, the linear part is the free wave equation and thus the space-time estimates for the free wave equation, which is called the Strichartz estimates, are applicable. However, the Strichartz estimates for the free wave equation  are not sufficient  to show the small data global existence for \eqref{NLKG}. Instead, we will prove the Strichartz estimate for the propagator $\mathcal{E}$ of \eqref{KG} by using the $L^2$ estimate for $v_l$ mentioned above and apply it to show the small data global existence. After that, we discuss the scattering and the asymptotic order by using the Strichartz estimates for the free wave equation. 

\section{Preliminaries}

\subsection{Expression of the linear equation}
\label{sec2.1}

We consider the linear Klein-Gordon equation with time-dependent mass with the initial data given at $t=t_0\geq 0$.
\begin{align}
\label{eq2.1}
	\begin{cases}
	\displaystyle\partial_t^2 v_l - \Delta v_l +\frac{\mu}{(1+t)^2}v_l =0, & (t, x) \in (t_0,\infty) \times \mathbb{R}^d,
	\\
	(v_l(t_0), \partial_t v_l(t_0)) = (v_{l,0}, v_{l,1}), &  x \in \mathbb{R}^d.
	\end{cases}
\end{align}
For the expression of the solution, we do not need to assume $\mu \geq 0$. 
By the Fourier transform, we have
\begin{align}
\label{eq2.2}
	\frac{d^2}{dt^2} \widehat{v_l} +|\xi|^2 \widehat{v_l} +\frac{\mu}{(1+t)^2} \widehat{v_l} =0,
\end{align}
where $\widehat{f}$ denotes the spatial Fourier transform of $f$. 
Setting $\tau :=(1+t)|\xi|$ and $\widehat{v_l} (t) = \tau^{1/2} V_l(\tau)$, we obtain the Bessel differential equation
\begin{align*}
	\frac{d^2 V_l  }{d\tau^2} + \frac{1}{\tau} \frac{dV_l }{d\tau} + \left( 1- \frac{\nu^2}{\tau^2}\right)V_l =0,
\end{align*}
where we recall $\nu:=\frac{1}{2}\sqrt{1-4\mu}$ when $\mu \leq 1/4$ and $\nu:=\frac{i}{2}\sqrt{4\mu-1}$ when $\mu > 1/4$.
Then the fundamental solutions are given by the Bessel function (of the first kind) $J_{\nu}(\tau)$ and the Neumann function (the Bessel function of the second type) $Y_{\nu}(\tau)$, where the Bessel function $J_{\nu}$ and $Y_{\nu}$ are defined by 
\begin{align*}
	J_{\nu} (\tau) &:= \sum_{k=0}^{\infty} \frac{(-1)^{k}}{k! \Gamma(\nu+k+1)} \left( \frac{\tau}{2} \right)^{\nu+2k},
	\\
	Y_{\nu}(\tau)&:= \frac{J_{\nu}(\tau)\cos (\nu \pi) - J_{-\nu} (\tau)}{\sin (\nu \pi)} \text{ for } \nu \in  \mathbb{C} \setminus \mathbb{Z}.
\end{align*}
For a non-negative integer $n$, $Y_{n}$ is defined by $\lim_{\nu \to n} Y_{\nu}$ and for negative integer $-n$, $Y_{-n}:=(-1)^nY_{n}$. 
Therefore, the fundamental solutions of \eqref{eq2.2} are given by
\begin{align*}
	e_{+}(t,\xi)&:=((1+t)|\xi|)^{\frac{1}{2}} J_{\nu}((1+t)|\xi|)
	\text{ and }
	e_{-}(t,\xi):=((1+t)|\xi|)^{\frac{1}{2}} Y_{\nu}((1+t)|\xi|) 
\end{align*}

With the initial data $(\widehat{v_l}(t_0), \partial_t \widehat{v_l}(t_0)) = (\widehat{v_{l,0}}, \widehat{v_{l,1}})$, $\widehat{v_l}$ is given by 
\begin{align*}
	\widehat{v_l}(t,\xi) =E_{0}(t,t_0,\xi) \widehat{v_{l,0}}(\xi) +E_{1}(t,t_0,\xi)\widehat{v_{l,1}}(\xi)
\end{align*}
where 
\begin{align*}
	E_{0}(t,t_0,\xi)
	&:=\frac{e_{+}(t,\xi) \dot{e_{-}}(t_0,\xi) - \dot{e_{+}}(t_0,\xi) e_{-}(t,\xi)}{e_{+}(t_0,\xi) \dot{e_{-}}(t_0,\xi) - \dot{e_{+}}(t_0,\xi) e_{-}(t_0,\xi) },
	\\
	E_{1}(t,t_0,\xi)
	&:=\frac{e_{+}(t_0,\xi) e_{-}(t,\xi) - e_{+}(t,\xi) e_{-}(t_0,\xi)}{e_{+}(t_0,\xi) \dot{e_{-}}(t_0,\xi) - \dot{e_{+}}(t_0,\xi) e_{-}(t_0,\xi) }
\end{align*}
and $\dot{e_{+}}$ and $\dot{e_{-}}$ denote the time derivative of $e_{+}$ and $e_{-}$, respectively. 
We also have 
\begin{align*}
	\frac{d\widehat{v_l}}{dt}(t,\xi) =\dot{E_{0}}(t,t_0,\xi) \widehat{v_{l,0}}(\xi) +\dot{E_{1}}(t,t_0,\xi)\widehat{v_{l,1}}(\xi)
\end{align*}
where $\dot{E_0}$ and $\dot{E_1}$ denote the time derivative of $E_0$ and $E_1$, respectively. 
Therefore, the solution $v_l$ of \eqref{eq2.1} is given by 
\begin{align}
\label{eq2.3.0}
	v_l(t) = \mathcal{E}_{0}(t,t_0) v_{l,0} + \mathcal{E}_{1}(t,t_0) v_{l,1}.
\end{align}
where $\mathcal{E}_{i}(t,t_0) = \mathcal{F}^{-1} E_{i}(t,t_0,\xi) \mathcal{F}$ for $i=0,1$ and $\mathcal{F}, \mathcal{F}^{-1}$ are the spatial Fourier transform and its inverse, respectively. 
By the vector expression, we have
\begin{align*}
	\begin{pmatrix}
	v_l \\ \dot{v_l}
	\end{pmatrix}
	= \mathcal{E}(t,t_0)
	\begin{pmatrix}
	v_{l,0} \\ v_{l,1}
	\end{pmatrix}
\end{align*}
where 
\begin{align*}
	\mathcal{E}(t,t_0)=
	\begin{pmatrix}
	 \mathcal{E}_{0}(t,t_0) &  \mathcal{E}_{1}(t,t_0)
	 \\
	 \dot{\mathcal{E}_{0}}(t,t_0) &  \dot{\mathcal{E}_{1}}(t,t_0)
	\end{pmatrix}.
\end{align*}

\begin{remark}
Our expression is based on the Bessel functions. On the other hand, B\"{o}hme and Reissig \cite{BoRe12} give the expression of the solution to \eqref{eq2.1} by the hypergeometric functions of confluent type. Of course, these expressions are essentially same. 
\end{remark}

We have another expression of the solutions. 
Regard the time dependent mass as an inhomogeneous term, namely, 
\begin{align*}
	\partial_t^2 v_l - \Delta v_l =-\frac{\mu}{(1+t)^2} v_l.
\end{align*}
Then, we have the following expression.
\begin{align}
\label{eq2.3}
	v_l(t) = \mathcal{W}_0(t-t_0) v_l(t_0) + \mathcal{W}_1(t-t_0) \partial_t v_l (t_0) + \int_{t_0}^{t} \mathcal{W}_1(t-s) \frac{-\mu}{(1+s)^2} v_l(s)ds,
\end{align}
where $\mathcal{W}_0:= \cos (t|\nabla|)$ and $\mathcal{W}_1:=|\nabla|^{-1}\sin (t|\nabla|)$ are the propagators of the free wave equation. 
By the vector expression, we have
\begin{align*}
	\begin{pmatrix}
	v_l \\ \dot{v_l}
	\end{pmatrix}
	= \mathcal{W}(t-t_0)
	\begin{pmatrix}
	v_{l,0} \\ v_{l,1}
	\end{pmatrix}
	+ \int_{t_0}^{t}  \mathcal{W}(t-s) F(s,u(s)) ds
\end{align*}
where 
\begin{align*}
	\mathcal{W}(t):=
	\begin{pmatrix}
	 \mathcal{W}_{0}(t) &  \mathcal{W}_{1}(t)
	 \\
	 \dot{\mathcal{W}_{0}}(t) &  \dot{\mathcal{W}_{1}}(t)
	\end{pmatrix},
	\quad
	F(t,u) :=
	\begin{pmatrix}
	0
	\\
	-\frac{\mu}{(1+t)^2} u
	\end{pmatrix}.
\end{align*}


\subsection{The Strichartz estimates}

We recall the energy estimates for the solution to \eqref{eq2.1} by B\"{o}hme and Reissig \cite{BoRe12}.
\begin{lemma}[{\cite[Theorem 4]{BoRe12}}]
\label{lem2.1}
Let $\mu > 0$. Then, the solution $v_l$ of \eqref{eq2.1} satisfies the following estimates.
\begin{align*}
	&\| v_l(t) \|_{\dot{H}^1}  \cleq \| v_{l,0} \|_{H^1} +\| v_{l,1} \|_{L^2},
	\\
	&\| \partial_t v_l(t) \|_{L^2} \cleq \| v_{l,0} \|_{H^1} +\| v_{l,1} \|_{L^2},
\end{align*}
where the implicit constants do not depend on $t_0$ and $t$. 
Moreover, we have the following estimate.
\begin{align*}
	&\| v_l(t) \|_{L^2} \cleq
	\begin{cases}
	(1+t)^{\frac{1}{2}+\re\nu} (1+t_0)^{\frac{1}{2} - \re\nu} (\| v_{l,0} \|_{L^2}+\| v_{l,1} \|_{L^2}) & \text{ when } \mu \neq \frac{1}{4},
	\\
	(1+t)^{\frac{1}{2}} (1+t_0)^{\frac{1}{2}} \left(1+\log\frac{1+t}{1+t_0}\right) (\| v_{l,0} \|_{L^2}+\| v_{l,1} \|_{L^2}) & \text{ when } \mu =\frac{1}{4},
	\end{cases} 
\end{align*}
where the implicit constants are independent of $t_0$ and $t$. 
\end{lemma}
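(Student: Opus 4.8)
The plan is to handle the three inequalities by two different mechanisms. The bounds on $\norm{v_l(t)}_{\dot{H}^1}$ and $\norm{\partial_t v_l(t)}_{L^2}$ follow from a monotone modified energy: multiplying \eqref{eq2.1} by $\partial_t v_l$ and integrating in $x$, and noting that \eqref{eq2.1} carries no first-order time derivative so that the mass term can be absorbed, one sees that
\begin{align*}
	E(t):=\frac12\norm{\partial_t v_l(t)}_{L^2}^2+\frac12\norm{v_l(t)}_{\dot{H}^1}^2+\frac{\mu}{2(1+t)^2}\norm{v_l(t)}_{L^2}^2
\end{align*}
satisfies $E'(t)=-\dfrac{\mu}{(1+t)^3}\norm{v_l(t)}_{L^2}^2\le0$ whenever $\mu>0$. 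Hence $E(t)\le E(t_0)$, and since $\norm{v_l(t_0)}_{L^2}=\norm{v_{l,0}}_{L^2}$ and $\mu(1+t_0)^{-2}\cleq1$, I get $\norm{\partial_t v_l(t)}_{L^2}^2+\norm{v_l(t)}_{\dot{H}^1}^2\le2E(t_0)\cleq\norm{v_{l,0}}_{H^1}^2+\norm{v_{l,1}}_{L^2}^2$, uniformly in $t_0$ and $t$.

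For the $L^2$-bound I would argue on the Fourier side. By \eqref{eq2.3.0} and Plancherel, $\norm{v_l(t)}_{L^2}\le\sup_\xi|E_0(t,t_0,\xi)|\,\norm{v_{l,0}}_{L^2}+\sup_\xi|E_1(t,t_0,\xi)|\,\norm{v_{l,1}}_{L^2}$, so it suffices to estimate the multipliers $E_0,E_1$ pointwise in $\xi$. The key structural point is that the denominator $e_+(t_0,\xi)\dot{e_-}(t_0,\xi)-\dot{e_+}(t_0,\xi)e_-(t_0,\xi)$ is \emph{independent of $t_0$} --- the ODE \eqref{eq2.2} has no damping term, so the Wronskian of any two solutions is constant --- and, by the Bessel Wronskian identity $J_\nu(z)Y_\nu'(z)-J_\nu'(z)Y_\nu(z)=2/(\pi z)$, it equals $2|\xi|/\pi$. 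Thus $E_j$ is $|\xi|^{-1}$ times a bilinear expression in the values and time-derivatives of $e_\pm$ evaluated at $t$ and at $t_0$.

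Next I would partition $\xi$-space according to the sizes of $\tau:=(1+t)|\xi|$ and $\tau_0:=(1+t_0)|\xi|$ relative to $1$ and insert the classical Bessel asymptotics: for large argument $J_\nu(z),Y_\nu(z)=O(z^{-1/2})$ and $\frac{d}{dz}\bigl(z^{1/2}J_\nu(z)\bigr),\frac{d}{dz}\bigl(z^{1/2}Y_\nu(z)\bigr)=O(1)$; for small argument $J_\nu(z)\approx z^{\nu}$ and $Y_\nu(z)\approx z^{-\nu}$ when $\re\nu>0$, while $J_0(z)\approx1$ and $Y_0(z)\approx\log z$ in the borderline case $\mu=1/4$. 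On $\{\tau_0,\tau\gtrsim1\}$ this yields $|E_0|\cleq1$ and $|E_1|\cleq|\xi|^{-1}$; on $\{\tau_0,\tau\lesssim1\}$ it yields $|E_0|\cleq(\tau/\tau_0)^{1/2+\re\nu}$ and $|E_1|\cleq|\xi|^{-1}(\tau\tau_0)^{1/2}(\tau/\tau_0)^{\re\nu}$, with an extra factor $1+|\log(\tau/\tau_0)|$ when $\mu=1/4$; and the mixed region $\{\tau_0\lesssim1\lesssim\tau\}$ is handled by combining the two expansions. Using $\re\nu\in[0,1/2]$, so that $(1+t_0)^{1/2-\re\nu}\ge(1+t_0)^{-1/2-\re\nu}$ and $\tau_0\gtrsim(1+t_0)^{-1}$ in the large-argument region, each of these is dominated by $(1+t)^{1/2+\re\nu}(1+t_0)^{1/2-\re\nu}$, respectively by $(1+t)^{1/2}(1+t_0)^{1/2}\bigl(1+\log\tfrac{1+t}{1+t_0}\bigr)$ when $\mu=1/4$; feeding these suprema into the Plancherel bound above gives the claim.

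The step I expect to be the main obstacle is the bookkeeping across the transition zone $(1+t)|\xi|\sim1$ and, relatedly, the degenerate index $\nu=0$ (the case $\mu=1/4$), where $e_+$ and $e_-$ are no longer separated by distinct powers of $\tau$ and logarithmic corrections appear; one has to check that the cancellation already visible at the level of the Wronskian survives in $E_0$ and $E_1$, so that the $\log\tau$ terms coming from $e_-(t,\xi)$ and from $\dot{e_-}(t_0,\xi)$ combine into $\log\frac{1+t}{1+t_0}$ rather than producing a power loss. Everything else is routine once the Bessel estimates are assembled, and the argument is in essence that of B\"ohme and Reissig \cite{BoRe12}, phrased through Bessel functions instead of confluent hypergeometric functions.
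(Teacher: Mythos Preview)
Your proposal is correct. For the $L^2$-bound you follow essentially the same route as the paper (Appendix~A): reduce the denominator to $2|\xi|/\pi$ via the Bessel Wronskian, then estimate $E_0,E_1$ pointwise in $\xi$ by splitting into the three regions $\tau\ge\tau_0\ge N$, $\tau\ge N\ge\tau_0$, $N\ge\tau\ge\tau_0$ and inserting the large/small-argument Bessel asymptotics, with the extra care at $\nu=0$ that you correctly flag (the paper isolates this as Lemma~\ref{lemA.6}).

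Where you genuinely differ is in the first two inequalities. The paper does \emph{not} use your monotone energy
\[
E(t)=\tfrac12\norm{\partial_t v_l}_{L^2}^2+\tfrac12\norm{v_l}_{\dot H^1}^2+\tfrac{\mu}{2(1+t)^2}\norm{v_l}_{L^2}^2;
\]
instead it also handles $\norm{v_l}_{\dot H^1}$ and $\norm{\partial_t v_l}_{L^2}$ by estimating $|\xi|E_j$ and $\dot E_j$ pointwise through the same Bessel case analysis, and then checks separately (Remark after Lemma~\ref{lemA.8}) that the resulting bounds are uniform in $t_0$ when $\re\nu<1/2$. Your energy argument is shorter and more transparent for these two bounds, using only $\mu>0$ and no Fourier analysis. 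The paper's approach, by contrast, yields the finer multiplier bounds of Lemma~\ref{lemA.8} that are reused elsewhere, and carries over to $\mu<0$ (Corollary~\ref{corA.9}), where your $E(t)$ is no longer monotone.
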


\begin{remark}
Lemma \ref{lem2.1} will be used only when $\mu>0$. If $\mu=0$, then we do not use this since the mass term, i.e. the $L^2$-norm of the solution, does not appear. 
\end{remark}


We also recall the Strichartz estimates for the free wave equation. 
We say that $(q,r)$ is a wave admissible pair if $(q,r)$ satisifes $2\leq q,r \leq \infty$ and
\begin{align*}
	\frac{1}{q} \leq \frac{d-1}{2} \left( \frac{1}{2} - \frac{1}{r}\right)
	\text{ and }
	(q,r,d) \neq (2,\infty,3)
\end{align*}
We set $\gamma:= d(1/2-1/r)-1/q$ to denote the derivative loss of the wave propagator.

\begin{lemma}[\cite{GiVe95,KeTa98}]
\label{lem2.2}
Let $(q,r)$ be a wave admissible pair. Then, we have
\begin{align*}
	&\norm{\mathcal{W}_{0}(t) \phi}_{L^q(I:L^r)} \cleq \norm{\phi}_{\dot{H}^{\gamma}}, 
	\\
	&\norm{\mathcal{W}_{1}(t) \phi}_{L^q(I:L^r)} \cleq \norm{\phi}_{\dot{H}^{\gamma-1}}, 
	\\
	&\norm{\int_{t_0}^{t}\mathcal{W}_{1}(t-s) F(s)ds}_{L^q(I:L^r)} \cleq \norm{F}_{L_t^1(I:L^2)}, 
\end{align*}
where $I$ is an interval containing $t_0$ and the implicit constants are independent of $I$. 
\end{lemma}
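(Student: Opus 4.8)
The plan is to obtain Lemma~\ref{lem2.2} from the abstract Strichartz inequality of Keel and Tao \cite{KeTa98} applied to the unitary half-wave group $U(t) := e^{it|\nabla|}$, together with a Littlewood--Paley decomposition to recover the derivative loss $\gamma$. The two inputs required by the abstract machinery are the energy bound $\norm{U(t)}_{L^2 \to L^2} = 1$, which is immediate from Plancherel, and, writing $P_N$ for the Littlewood--Paley projection onto $|\xi| \sim N$, the frequency-localized dispersive estimate
\begin{align*}
	\norm{U(t) P_N f}_{L^\infty} \cleq N^{\frac{d+1}{2}} |t|^{-\frac{d-1}{2}} \norm{f}_{L^1} .
\end{align*}
By scaling it suffices to prove this at $N = 1$, where the kernel is $\int e^{i(x\cdot\xi + t|\xi|)} \chi(\xi)\, d\xi$; this is the classical wave-kernel bound, obtained by non-stationary phase when $|x|$ is far from $|t|$ and by the stationary-phase lemma, using the nonvanishing curvature of the spheres $\{|\xi| = \mathrm{const}\}$, when $|x| \sim |t|$ (see \cite{GiVe95}).

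Interpolating the $N=1$ dispersive estimate with the $L^2$ bound yields $\norm{U(t)U(s)^* P_1 f}_{L^r} \cleq |t-s|^{-\frac{d-1}{2}(\frac12-\frac1r)} \norm{f}_{L^{r'}}$, which is exactly the decay hypothesis of \cite{KeTa98} with $\sigma = \frac{d-1}{2}$. The abstract theorem then gives the homogeneous bound $\norm{U(t) P_1 f}_{L^q(\R : L^r)} \cleq \norm{f}_{L^2}$ for every pair $(q,r)$ with $2\le q,r\le\infty$, $\frac1q \le \frac{d-1}{2}(\frac12-\frac1r)$ and $(q,r,d)\neq(2,\infty,3)$ — the excluded triple being precisely the forbidden Keel--Tao endpoint $\sigma = 1$. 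Here the endpoint case $q = 2$ (which for $d\ge 4$ sits on the critical line $\frac1q = \frac{d-1}{2}(\frac12-\frac1r)$) is covered only because the Keel--Tao argument is bilinear, rather than an elementary $TT^*$ plus Hardy--Littlewood--Sobolev interpolation; for all non-endpoint admissible pairs, and for $d=3$, the softer argument already suffices.

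It remains to remove the frequency localization and the time cutoff. Rescaling the $N=1$ homogeneous estimate to a general dyadic block produces the factor $N^{\gamma}$ with $\gamma = d(\frac12-\frac1r) - \frac1q$ (and $N^{\gamma-1}$ after dividing by $|\nabla|$); decomposing $\mathcal{W}_0(t) = \frac12\big(e^{it|\nabla|}+e^{-it|\nabla|}\big)$ and $\mathcal{W}_1(t) = \frac{1}{2i}|\nabla|^{-1}\big(e^{it|\nabla|}-e^{-it|\nabla|}\big)$, summing the dyadic pieces with the Littlewood--Paley square-function inequality in $L^r_x$ (for $r<\infty$; the admissible pairs with $r=\infty$, which occur only for $d\ge 4$, are handled by the standard Besov-space refinement in the references) and Minkowski's inequality in $x$ when $q\ge r$ gives $\norm{\mathcal{W}_0(t)\phi}_{L^q(I:L^r)} \cleq \norm{\phi}_{\dot H^{\gamma}}$ and $\norm{\mathcal{W}_1(t)\phi}_{L^q(I:L^r)} \cleq \norm{\phi}_{\dot H^{\gamma-1}}$; passing from $\R$ to an arbitrary interval $I$ only shrinks the left-hand side, so all constants are $I$-independent. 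For the inhomogeneous estimate one notes first the non-retarded bound $\norm{\int_\R \mathcal{W}_1(t-s)F(s)\,ds}_{L^q(\R:L^r)} \cleq \norm{F}_{L^1(\R:L^2)}$, which follows from the homogeneous estimate for $\mathcal{W}_1$ and Minkowski's integral inequality in $s$, and then inserts the cutoff $s\in(t_0,t)$ via the Christ--Kiselev lemma (applicable since $q>1$). The only genuinely delicate point is the homogeneous endpoint in the middle step, i.e.\ the bilinear interpolation underlying the Keel--Tao theorem; since Lemma~\ref{lem2.2} otherwise only uses non-endpoint homogeneous estimates and the $L^1_tL^2_x$ inhomogeneous bound, both of which are elementary, no further endpoint difficulty arises.
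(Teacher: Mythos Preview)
The paper does not supply a proof of Lemma~\ref{lem2.2}; it is quoted directly from \cite{GiVe95,KeTa98}. Your sketch is a correct outline of the standard argument in those references (frequency-localized dispersion, the Keel--Tao abstract Strichartz theorem, Littlewood--Paley reassembly), so there is nothing in the paper to compare against.

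Two small remarks on the inhomogeneous part. First, as written, the third inequality of the lemma---and your derivation of it from the homogeneous $\mathcal{W}_1$ bound via Minkowski---is only literally correct when $\gamma=1$: the homogeneous estimate gives $\norm{\mathcal{W}_1(t)\phi}_{L^qL^r}\cleq\norm{\phi}_{\dot H^{\gamma-1}}$, so Minkowski produces $\norm{F}_{L^1_t\dot H^{\gamma-1}_x}$ rather than $\norm{F}_{L^1_tL^2_x}$. This is harmless here, since the paper only applies the inhomogeneous estimate with $\gamma=1$ (Proposition~\ref{prop2.3} and the proof of Theorem~\ref{thm1.1}), but your sentence ``which follows from the homogeneous estimate for $\mathcal{W}_1$'' should be read with that restriction. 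Second, the Christ--Kiselev lemma is not needed: because the dual time exponent is $1$, the truncation $s\in[t_0,t]$ is already handled by Minkowski directly (for fixed $s\ge t_0$ one has $\chi_{[t_0,t]}(s)=\chi_{[s,\infty)}(t)$, which only shrinks the $L^q_t$-norm before applying the homogeneous bound). Invoking Christ--Kiselev is valid but heavier than required.
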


Combining these estimates, we get the Strichartz estimates for the propagator $\mathcal{E}$. 

\begin{proposition}
\label{prop2.3}
Let $\mu \geq 0$ and $(q,r)$ be a wave admissible pair such that $\gamma=1$. We have the following estimates.
\begin{align*}
	&\norm{\mathcal{E}_{0}(t,t_0) \phi}_{L_t^q(I: L_x^r)}  \cleq \norm{\phi}_{\dot{H}^1} + \mu \norm{\phi}_{L^2},
	\\
	&\norm{\mathcal{E}_{1}(t,t_0) \phi}_{L_t^q (I:L_x^r)}  \cleq (1+\mu)\norm{\phi}_{L^2},
\end{align*}
where $I=[t_0,T)$ for any $T >t_0$ and the implicit constants do not depend on $T$ and $t_0$. 
\end{proposition}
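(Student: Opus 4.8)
The plan is to combine the two representations of the solution $v_l$ to \eqref{eq2.1} established in Section \ref{sec2.1}: formula \eqref{eq2.3.0} expresses $v_l = \mathcal{E}_0(t,t_0)\phi$ (or $\mathcal{E}_1(t,t_0)\phi$) as an actual solution of \eqref{KG}, while formula \eqref{eq2.3} rewrites the same function via Duhamel applied to the free wave propagator, treating $\mu(1+t)^{-2}v_l$ as an inhomogeneous term. Concretely, for the first estimate I take initial data $(v_{l,0},v_{l,1})=(\phi,0)$ at time $t_0$, so that $v_l(t)=\mathcal{E}_0(t,t_0)\phi$, and then apply the inhomogeneous Strichartz estimate of Lemma \ref{lem2.2} to \eqref{eq2.3}:
\begin{align*}
	\norm{\mathcal{E}_0(t,t_0)\phi}_{L_t^q(I:L_x^r)}
	&\cleq \norm{\mathcal{W}_0(t-t_0)\phi}_{L_t^q(I:L_x^r)} + \norm{\mathcal{W}_1(t-t_0)\cdot 0}_{L_t^q(I:L_x^r)}
	\\
	&\quad + \mu\norm{(1+s)^{-2} v_l(s)}_{L_s^1(I:L^2)}.
\end{align*}
The first term is bounded by $\norm{\phi}_{\dot{H}^1}$ since $\gamma=1$. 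For the second estimate I take $(v_{l,0},v_{l,1})=(0,\phi)$, so $v_l(t)=\mathcal{E}_1(t,t_0)\phi$, and the homogeneous term contributes $\norm{\phi}_{\dot{H}^{\gamma-1}}=\norm{\phi}_{L^2}$.

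The crux is therefore estimating the $L^1_t L^2_x$ norm of $(1+s)^{-2}v_l(s)$, which is exactly where the sharp growth bound of Lemma \ref{lem2.1} enters. When $\mu>0$ one has $\norm{v_l(s)}_{L^2}\cleq (1+s)^{1/2+\re\nu}(1+t_0)^{1/2-\re\nu}(\norm{v_{l,0}}_{L^2}+\norm{v_{l,1}}_{L^2})$, and since $\re\nu<1/2$ the exponent $1/2+\re\nu-2<-1$, so the time integral $\int_{t_0}^\infty (1+s)^{-3/2+\re\nu}\,ds$ converges with a bound independent of $t_0$ (after absorbing the $(1+t_0)^{1/2-\re\nu}$ factor against the lower limit of integration). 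In the first estimate $\norm{v_{l,0}}_{L^2}=\norm{\phi}_{L^2}$ and $v_{l,1}=0$, giving the $\mu\norm{\phi}_{L^2}$ term; in the second, $\norm{v_{l,1}}_{L^2}=\norm{\phi}_{L^2}$, giving the $\mu\norm{\phi}_{L^2}$ piece that combines with the homogeneous $\norm{\phi}_{L^2}$ into $(1+\mu)\norm{\phi}_{L^2}$. The borderline case $\mu=1/4$ uses the logarithmic variant of Lemma \ref{lem2.1}: there $\norm{v_l(s)}_{L^2}\cleq (1+s)^{1/2}(1+t_0)^{1/2}(1+\log\frac{1+s}{1+t_0})$, and $(1+s)^{-3/2}\log(1+s)$ is still integrable on $[t_0,\infty)$ with a $t_0$-uniform bound, so the same conclusion holds. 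The case $\mu=0$ is trivial since then $\mathcal{E}=\mathcal{W}$ and the inhomogeneous term vanishes.

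The main obstacle is bookkeeping the dependence on $t_0$: one must verify that, after the change of variables in the integral $\int_{t_0}^t (1+s)^{-2}(1+s)^{1/2+\re\nu}(1+t_0)^{1/2-\re\nu}\,ds$, the factor $(1+t_0)^{1/2-\re\nu}$ is genuinely killed by $\int_{t_0}^\infty(1+s)^{-3/2+\re\nu}ds \cleq (1+t_0)^{-1/2+\re\nu}$, so that the resulting constant is truly uniform in $t_0$ (and in $T$); the exponent $-1/2+\re\nu<0$ is what makes this work, and it is the only place the sign condition $\mu\ge 0$ (equivalently $\re\nu<1/2$) is used quantitatively. A minor point is that Lemma \ref{lem2.1} is stated for $\mu>0$, so the statement's inclusion of $\mu=0$ must be handled separately as noted above. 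Everything else is a direct application of Lemmas \ref{lem2.1} and \ref{lem2.2} to the two representation formulas.
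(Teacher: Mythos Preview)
Your proposal is correct and follows essentially the same approach as the paper: use the Duhamel representation \eqref{eq2.3} together with the free-wave Strichartz estimates of Lemma \ref{lem2.2}, and control the inhomogeneous term $\mu(1+s)^{-2}v_l$ in $L^1_tL^2_x$ via the $L^2$ growth bound of Lemma \ref{lem2.1}, checking that the factors of $(1+t_0)^{\pm(1/2-\re\nu)}$ cancel so the constants are uniform in $t_0$. The paper carries this out for general data $(v_{l,0},v_{l,1})$ and then specializes, whereas you specialize to $(\phi,0)$ and $(0,\phi)$ at the outset, but the argument is otherwise identical.
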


\begin{proof}
When $\mu=0$, $\mathcal{E}_{0} = \mathcal{W}_{0}$ and $\mathcal{E}_{1}=\mathcal{W}_{1}$ and thus the statement holds by Lemma \ref{lem2.2}. We consider $\mu>0$. 
Let $v_l$ be a solution of the linear equation \eqref{eq2.1}. 
By \eqref{eq2.3} and the Strichartz estimates for the wave equation, Lemma \ref{lem2.2}, it holds that
\begin{align*}
	\norm{v_l}_{L_t^q L_x^r(I)} \cleq \norm{v_{l,0}}_{\dot{H}^1}+ \norm{v_{l,1}}_{L^2} +\mu \norm{(1+t)^{-2}v_l}_{L_t^1L_x^2(I)},
\end{align*}
where the implicit constant does not depend on $T$ and $t_0$. 
When $\mu\neq1/4$, by Lemma \ref{lem2.1},  we have
\begin{align*}
	 \norm{(1+t)^{-2}v_l}_{L_t^1 L_x^2(I)}
	&\cleq \norm{(1+t)^{-\frac{3}{2} +\re\nu} }_{L_t^1(I)} (1+t_0)^{\frac{1}{2} - \re\nu} (\| v_{l,0} \|_{L^2}+\| v_{l,1} \|_{L^2}) 
	\\
	&\cleq (1+t_0)^{-\frac{1}{2} +\re\nu}(1+t_0)^{\frac{1}{2} - \re\nu}(\| v_{l,0} \|_{L^2}+\| v_{l,1} \|_{L^2})
	\\
	&\cleq \| v_{l,0} \|_{L^2}+\| v_{l,1} \|_{L^2}
\end{align*}
On the other hand, if $\mu=1/4$, then it holds from Lemma \ref{lem2.1} that
\begin{align*}
	 \norm{(1+t)^{-2}v_l}_{L_t^1 L_x^2(I)}
	&\cleq \norm{(1+t)^{-\frac{3}{2}} \left(1+\log \frac{1+t}{1+t_0}\right) }_{L_t^1(I)} (1+t_0)^{\frac{1}{2}} (\| v_{l,0} \|_{L^2}+\| v_{l,1} \|_{L^2}) 
	\\
	&\cleq (1+t_0)^{-\frac{1}{2}}(1+t_0)^{\frac{1}{2}}(\| v_{l,0} \|_{L^2}+\| v_{l,1} \|_{L^2})
	\\
	&\cleq \| v_{l,0} \|_{L^2}+\| v_{l,1} \|_{L^2}
\end{align*}

Therefore, in any case, we obtain
\begin{align*}
	\norm{v_l}_{L_t^q L_x^r(I)} 
	&\cleq  \norm{v_{l,0}}_{\dot{H}^1} + \norm{v_{l,1}}_{L^2} +\mu(\| v_{l,0} \|_{L^2}+\| v_{l,1} \|_{L^2})
\end{align*}
for a wave admissible pair $(q,r)$ satisfying $\gamma=1$. This implies the statements. 
\end{proof}
\section{Scattering for the linear equation}

We give the proofs of Theorem \ref{thm1.0} and Corollary \ref{cor1.0}. 
First, we show the existence of the limit of $\overrightarrow{v_l}(t)$ in $\dot{H}^1(\mathbb{R}^d) \times L^2(\mathbb{R}^d)$.

\begin{lemma}
Let $\mu \geq 0$. Then, there exists $\overrightarrow{v_{+}} \in \dot{H}^1(\mathbb{R}^d) \times L^2(\mathbb{R}^d)$ such that
\begin{align*}
	\norm{\overrightarrow{v_l}(t) - \mathcal{W}(t)\overrightarrow{v_{+}}}_{\dot{H}^1 \times L^2}
	\to 0
\end{align*}
as $t \to \infty$.  
\end{lemma}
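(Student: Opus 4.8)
The plan is to use the representation \eqref{eq2.3} of $v_l$ as a perturbation of the free wave equation, and to show that the Duhamel term converges in $\dot H^1\times L^2$ as $t\to\infty$, which amounts to verifying a Cauchy criterion. Write $\overrightarrow{v_l}(t)=\mathcal{W}(t)\overrightarrow{v_l}(0)+\int_0^t\mathcal{W}(t-s)F(s,v_l(s))\,ds$ with $F(s,v_l)=(0,-\mu(1+s)^{-2}v_l)$, and set
\begin{align*}
	\overrightarrow{v_+} := \overrightarrow{v_l}(0) + \int_0^\infty \mathcal{W}(-s) F(s,v_l(s))\,ds.
\end{align*}
Then $\overrightarrow{v_l}(t)-\mathcal{W}(t)\overrightarrow{v_+} = -\mathcal{W}(t)\int_t^\infty \mathcal{W}(-s)F(s,v_l(s))\,ds$, so by the unitarity of $\mathcal{W}(t)$ on $\dot H^1\times L^2$ (or simply the inhomogeneous Strichartz-type bound of Lemma \ref{lem2.2} at the endpoint $(\infty,2)$, i.e. the energy estimate for the wave equation) it suffices to show that $\int_t^\infty \|F(s,v_l(s))\|_{L^2}\,ds\to 0$ as $t\to\infty$, and in particular that $\int_0^\infty \mu(1+s)^{-2}\|v_l(s)\|_{L^2}\,ds<\infty$ so that $\overrightarrow{v_+}$ is well defined.

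The key step is therefore the time-integrability of $s\mapsto (1+s)^{-2}\|v_l(s)\|_{L^2}$. When $\mu=0$ this is trivial since $F\equiv 0$ and $\overrightarrow{v_+}=\overrightarrow{v_l}(0)$. When $\mu>0$, I would invoke Lemma \ref{lem2.1} with $t_0=0$: if $\mu\neq 1/4$ then $\|v_l(s)\|_{L^2}\cleq (1+s)^{1/2+\re\nu}(\|v_{l,0}\|_{L^2}+\|v_{l,1}\|_{L^2})$, so $(1+s)^{-2}\|v_l(s)\|_{L^2}\cleq (1+s)^{-3/2+\re\nu}$, and since $\re\nu<1/2$ (because $\mu>0$) the exponent $-3/2+\re\nu<-1$ is integrable on $[0,\infty)$; if $\mu=1/4$ then $\|v_l(s)\|_{L^2}\cleq (1+s)^{1/2}(1+\log(1+s))$, so the integrand is $\cleq (1+s)^{-3/2}(1+\log(1+s))$, which is again integrable. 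Hence $\int_t^\infty \|F(s,v_l(s))\|_{L^2}\,ds\to 0$, giving both the existence of $\overrightarrow{v_+}\in\dot H^1\times L^2$ and the convergence $\|\overrightarrow{v_l}(t)-\mathcal{W}(t)\overrightarrow{v_+}\|_{\dot H^1\times L^2}\to 0$.

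One point to be careful about is the use of the energy estimate for the free wave propagator $\mathcal{W}(t-s)$ acting on a source supported only in the second component: this is exactly the $q=\infty$, $r=2$ endpoint, which follows from conservation of energy for the wave equation (it is the standard estimate $\|\int_{t_0}^t\mathcal{W}_1(t-s)G(s)\,ds\|_{L^\infty_t(I:L^2_x)}\cleq\|G\|_{L^1_t(I:L^2_x)}$ together with $\|\int_{t_0}^t\dot{\mathcal{W}_1}(t-s)G(s)\,ds\|_{L^\infty_t(I:L^2_x)}\cleq\|G\|_{L^1_t(I:L^2_x)}$, and $\mathcal{W}_0$ does not enter since the first component of $F$ vanishes). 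The only genuine obstacle is the integrability just discussed, and that has already been reduced to Lemma \ref{lem2.1}; the rest is the routine Cauchy-sequence argument. The quantitative rate claimed in Theorem \ref{thm1.0} will then come from estimating $\int_t^\infty(1+s)^{-3/2+\re\nu}\,ds\cleq (1+t)^{-1/2+\re\nu}$ (resp. the logarithmic variant when $\mu=1/4$), but for the present lemma only the qualitative statement $\to 0$ is needed.
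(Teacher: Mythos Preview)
Your proof is correct and follows essentially the same approach as the paper: both reduce the question to the time-integrability of $\mu(1+s)^{-2}\|v_l(s)\|_{L^2}$ via Lemma~\ref{lem2.1}, and both exploit the unitarity of $\mathcal{W}$ on $\dot H^1\times L^2$. The only cosmetic difference is that the paper phrases the argument as a Cauchy criterion for $\mathcal{W}(-t)\overrightarrow{v_l}(t)$, whereas you define $\overrightarrow{v_+}$ explicitly at the outset and then verify convergence; the underlying estimate is identical.
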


\begin{proof}
If $\mu=0$, the statement is trivial. We only consider the case $\mu\neq 1/4$ and $\mu>0$. In the case of $\mu=1/4$, the statement also holds by a small modification.
For $0\leq \tau \leq t$, by Lemma \ref{lem2.1}, we get
\begin{align*}
	&\norm{\mathcal{W}(-t)\overrightarrow{v_l}(t)- \mathcal{W}(-\tau)\overrightarrow{v_l}(\tau)}_{\dot{H}^1\times L^2}
	\\
	&\cleq \norm{ \int_{\tau}^{t} \mathcal{W}_1(-s) \frac{\mu}{(1+s)^2} v_l(s)ds}_{\dot{H}^1}
	+\norm{ \int_{\tau}^{t} \mathcal{W}_0(-s) \frac{\mu}{(1+s)^2} v_l(s)ds}_{L^2}
	\\
	&\cleq \mu\int_{\tau}^{t} (1+s)^{-2} \norm{v_l(s)}_{L^2} ds
	\\
	&\cleq  \mu\int_{\tau}^{t} (1+s)^{-2+\frac{1}{2}+\re \nu}  ds (\| v_{l,0} \|_{L^2}+\| v_{l,1} \|_{L^2})
	\\
	&\to 0
\end{align*}
as $\tau \to \infty$ since $\re \nu < 1/2$ when $\mu >0$.
By the completeness of $\dot{H}^1\times L^2$, we get the limit $\overrightarrow{v_+}$.
\end{proof}

Next, we show the asymptotic order. 

\begin{proof}[Proof of Theorem \ref{thm1.0}]
When $\mu=0$, this is trivial. 
First, we consider the case of $\mu \neq 1/4$. It holds from Lemma \ref{lem2.1} that
\begin{align*}
	&\norm{\overrightarrow{v_l}(t)- \mathcal{W}(t)\overrightarrow{v_+}}_{\dot{H}^1\times L^2}
	\\
	&\cleq \mu\int_{t}^{\infty} (1+s)^{-2} \norm{v_l(s)}_{L^2} ds
	\\
	&\cleq  \mu  (1+t)^{-\frac{1}{2}+\re \nu}  (\| v_{l,0} \|_{L^2}+\| v_{l,1} \|_{L^2}).
\end{align*}
Next, in the case of $\mu = 1/4$, by the similar argument, we have
\begin{align*}
	&\norm{\overrightarrow{v_l}(t)- \mathcal{W}(t)\overrightarrow{v_+}}_{\dot{H}^1\times L^2}
	\\
	&\cleq \mu\int_{t}^{\infty} (1+s)^{-2} \norm{v_l(s)}_{L^2} ds
	\\
	&\cleq \mu\int_{t}^{\infty} (1+s)^{-2+\frac{1}{2}} (1+\log (1+s))  ds (\| v_{l,0} \|_{L^2}+\| v_{l,1} \|_{L^2})
	\\
	&\cleq \mu (1+t)^{-\frac{1}{2}} (1+\log(1+t)) (\| v_{l,0} \|_{L^2}+\| v_{l,1} \|_{L^2}).
\end{align*}
This completes the proof. 
\end{proof}

At last, we go back to \eqref{DW}. 

\begin{proof}[Proof of Corollary \ref{cor1.0}]
By retransforming $u_l=(1+t)^{-\mu_1/2}v_l$, it is easy to show that 
\begin{align*}
	&\norm{u_l(t) -(1+t)^{-\frac{\mu_1}{2}} (\mathcal{W}(t)\overrightarrow{v_{+}} )_1}_{\dot{H}^1}
	\\
	&\cleq \mu 
	(\norm{u_{l,0}}_{L^2}+\norm{u_{l,1}}_{L^2})
	\begin{cases}
	(1+t)^{-\frac{1}{2} + \re\nu - \frac{\mu_1}{2}}, & \mu \neq 1/4,
	\\
	(1+t)^{-\frac{1}{2} - \frac{\mu_1}{2}} \log(1+t), & \mu =1/4.
	\end{cases}
\end{align*}
We treat the time derivative of $u_l$. Since $\partial_t v_l = \frac{\mu_1}{2}(1+t)^{\frac{\mu_1}{2}-1}u_l + (1+t)^{\frac{\mu_1}{2}} \partial_t u_l$, we have
\begin{align*}
	&\norm{(1+t)^{\frac{\mu_1}{2}} \partial_t u_l(t) -( \mathcal{W}(t)\overrightarrow{v}_{+})_{2} }_{L^2} 
	\\
	&\cleq \norm{ \partial_t v_l(t) -( \mathcal{W}(t)\overrightarrow{v}_{+})_{2} }_{L^2}+ \frac{|\mu_1|}{2} (1+t)^{\frac{\mu_1}{2}-1} \norm{u_l(t)}_{L^2}.
\end{align*}
The first term is estimated by Theorem \ref{thm1.0}. For the second term, by Lemma \ref{lem2.1} and $u_l=(1+t)^{-\mu_1/2}v_l$, we get
\begin{align*}
	(1+t)^{\frac{\mu_1}{2}-1} \norm{u_l(t)}_{L^2}
	&=(1+t)^{-1} \norm{ v_l(t)}_{L^2}
	\\
	&\cleq (\norm{u_{l,0}}_{L^2}+ \norm{u_{l,1}}_{L^2}) 
	\begin{cases}
	 (1+t)^{-\frac{1}{2}+\re \nu}, & \mu\neq 1/4,
	 \\
	 (1+t)^{-\frac{1}{2}} \log(1+t), & \mu=1/4.
	\end{cases}
\end{align*}
Therefore, we get
\begin{align*}
	&\norm{(1+t)^{\frac{\mu_1}{2}} \partial_t u_l(t) -( \mathcal{W}(t)\overrightarrow{v}_{+})_{2} }_{L^2} 
	\\
	&\cleq (\mu+|\mu_1|)(\norm{u_{l,0}}_{L^2}+ \norm{u_{l,1}}_{L^2}) 
	\begin{cases}
	 (1+t)^{-\frac{1}{2}+\re \nu}, & \mu\neq 1/4,
	 \\
	 (1+t)^{-\frac{1}{2}} \log(1+t), & \mu=1/4.
	\end{cases}
\end{align*}
This implies the desired statement.
\end{proof}
\section{Small data scattering for the nonlinear equation}

We give the proofs of Theorem \ref{thm1.1}, Theorem \ref{thm1.3}, and Corollary \ref{cor1.6}.

First, we show the local well-posedness and the small data global existence for \eqref{NLKG}.

\begin{proof}[Proof of Theorem \ref{thm1.1}]
Now, $(p_1,2p_1)$ is a wave admissible pair satisfying $\gamma=1$ since $3\leq d \leq 6$. 
We set 
\begin{align*}
	\Phi[v](t):=\mathcal{E}_{0}(t,0) v_0 + \mathcal{E}_{1}(t,0) v_1 + \int_{0}^{t} \mathcal{E}_{1}(t,s) \frac{\lambda}{(1+s)^{\frac{2\mu_1}{d-2}}} |v(s)|^{\frac{4}{d-2}} v(s) ds,
\end{align*}
and
\begin{align*}
	X(a,T)&:=\{ v : [0,T) \times \mathbb{R}^d\to \mathbb{R} : \| v \|_{X} \leq a\},
	\\
	\| v \|_{X}&:=\| v \|_{L_t^{p_1}L_x^{2p_1}(0,T)}.
\end{align*}
We show that $\Phi$ is a contraction mapping on $X(a,T)$ for some $a>0$. 

By the Strichartz estimates for $\mathcal{E}$, i.e. Proposition \ref{prop2.3}, and $\mu_1\geq 0$, we get
\begin{align*}
	\norm{\Phi[v]}_{X}  
	&\cleq \delta 
	+  \int_{0}^{T} \norm{ \chi_{[0,t]}(s) \mathcal{E}_{1}(t,s) \frac{\lambda}{(1+s)^{\frac{2\mu_1}{d-2}}} |v(s)|^{\frac{4}{d-2}} v(s)}_{X} ds
	\\
	&\cleq \delta
	+  \int_{0}^{T} \frac{1}{(1+s)^{\frac{2\mu_1}{d-2}}} \norm{ |v(s)|^{\frac{4}{d-2}} v(s)}_{L_x^2} ds
	\\
	&\cleq \delta
	+\norm{ |v|^{\frac{4}{d-2}} v}_{L_t^1L_x^2(0,T)}
	\\
	&\cleq \delta
	+\norm{v}_{X}^{p_1}
	\\
	&\leq C\delta
	+Ca^{p_1}.
\end{align*}
If we take $\delta$ and $a$ satisfying $Ca^{p_1-1}<1/2$ and $C\delta < a/2$, then $\| \Phi[v] \|_{X} \leq a$ and thus $\Phi$ is a mapping on $X(a,T)$. Take $v$ and $\tilde{v}$ from $X(a,T)$. Then, 
\begin{align*}
	\norm{\Phi[v] - \Phi[\tilde{v}]}_{X}  
	&\cleq \int_{0}^{T} \norm{ \chi_{[0,t]}(s) \mathcal{E}_{1}(t,s) \frac{\lambda}{(1+s)^{\frac{2\mu_1}{d-2}}} (|v(s)|^{\frac{4}{d-2}}v(s)-|\tilde{v}(s)|^{\frac{4}{d-2}} \tilde{v}(s))} _{X} ds
	\\
	&\cleq  \int_{0}^{T} \frac{1}{(1+s)^{\frac{2\mu_1}{d-2}}} \norm{ (|v(s)|^{\frac{4}{d-2}}v(s)-|\tilde{v}(s)|^{\frac{4}{d-2}} \tilde{v}(s))}_{L_x^2} ds
	\\
	&\cleq \norm{ |v(s)|^{\frac{4}{d-2}}v(s)-|\tilde{v}(s)|^{\frac{4}{d-2}} \tilde{v}(s)}_{L_t^1L_x^2(0,T)}
	\\
	&\cleq (\norm{v}_{X}^{p_1-1} + \norm{\tilde{v}}_{X}^{p_1-1})\norm{v-\tilde{v}}_{X}
	\\
	&\leq Ca^{p_1-1}\norm{v-\tilde{v}}_{X}.
\end{align*}
If $Ca^{p_1-1}<1/2$, then $\Phi$ is a contraction mapping on $X(a,T)$ and thus there exists a unique $v \in X(a,T)$ such that $v=\Phi[v]$. Moreover, $v(t)$ belongs to $H^1$ for each $t\in [0,T)$. Indeed, by Lemma \ref{lem2.1} and $\mu_1\geq 0$, we have
\begin{align*}
	\norm{v(t)}_{\dot{H}^1} 
	&\cleq \norm{v_0}_{H^1} + \norm{v_1}_{L^2} 
	+  \int_{0}^{T} \norm{\chi_{[0,t]}(s) \mathcal{E}_{1}(t,s) \frac{\lambda}{(1+s)^{\frac{2\mu_1}{d-2}}} |v(s)|^{\frac{4}{d-2}} v(s)}_{\dot{H}^1} ds
	\\
	&\cleq \norm{v_0}_{H^1} + \norm{v_1}_{L^2} 
	+  \int_{0}^{T} \frac{\lambda}{(1+s)^{\frac{2\mu_1}{d-2}}} \norm{ |v(s)|^{\frac{4}{d-2}} v(s)}_{L^2} ds
	\\
	&\cleq \norm{v_0}_{H^1} + \norm{v_1}_{L^2} 
	+\norm{v}_{X}^{p_1}.
\end{align*}
For the $L^2$-norm, we have the following. If $\mu \neq 1/4$, then it holds that
\begin{align*}
	\norm{v(t)}_{L^2} 
	&\cleq (1+t)^{\frac{1}{2}+\re\nu}(\norm{v_0}_{H^1} + \norm{v_1}_{L^2} )
	\\
	&\quad +  \int_{0}^{T} \norm{\chi_{[0,t]}(s) \mathcal{E}_{1}(t,s) \frac{\lambda}{(1+s)^{\frac{2\mu_1}{d-2}}} |v(s)|^{\frac{4}{d-2}} v(s)}_{L^2} ds
	\\
	&\cleq (1+t)^{\frac{1}{2}+\re\nu}(\norm{v_0}_{H^1} + \norm{v_1}_{L^2} )
	\\
	&\quad +  (1+t)^{\frac{1}{2}+\re\nu} \int_{0}^{T}\chi_{[0,t]}(s) \frac{(1+s)^{\frac{1}{2}-\re\nu}}{(1+s)^{\frac{2\mu_1}{d-2}}}  \norm{ |v(s)|^{\frac{4}{d-2}} v(s)}_{L^2} ds
	\\
	&\cleq  (1+t)^{\frac{1}{2}+\re\nu}(\norm{v_0}_{H^1} + \norm{v_1}_{L^2} )
	+  (1+t)^{\max\{\frac{1}{2}+\re\nu, 1-\frac{2\mu_1}{d-2}\}} \norm{v}_{X}^{p_1}
	\\
	&\cleq   (1+t)^{\max\{\frac{1}{2}+\re\nu, 1-\frac{2\mu_1}{d-2}\}}(\norm{v_0}_{H^1} + \norm{v_1}_{L^2} +  \norm{v}_{X}^{p_1}).
\end{align*}
In the similar way, if $\mu=1/4$, then we have
\begin{align*}
		\norm{v(t)}_{L^2} 
	&\cleq (1+t)^{\frac{1}{2}}(1+\log (1+t))(\norm{v_0}_{H^1} + \norm{v_1}_{L^2} )
	\\
	&\quad +  (1+t)^{\frac{1}{2}} \int_{0}^{T}\chi_{[0,t]}(s) \frac{(1+s)^{\frac{1}{2}}}{(1+s)^{\frac{2\mu_1}{d-2}}}  \left(1+\log \frac{1+t}{1+s}\right) \norm{ |v(s)|^{\frac{4}{d-2}} v(s)}_{L^2} ds
	\\
	&\cleq  (1+t)^{\frac{1}{2}}(1+\log (1+t))(\norm{v_0}_{H^1} + \norm{v_1}_{L^2} )
	\\
	&\quad +  (1+t)^{\max\{\frac{1}{2}, 1-\frac{2\mu_1}{d-2}\}} (1+\log (1+t))\norm{v}_{X}^{p_1}
	\\
	&\cleq   (1+t)^{\max\{\frac{1}{2}, 1-\frac{2\mu_1}{d-2}\}}(1+\log (1+t))(\norm{v_0}_{H^1} + \norm{v_1}_{L^2} +  \norm{v}_{X}^{p_1}).
\end{align*}
In any case, $v(t) \in H^1$ and we also obtain $\partial_t v(t)  \in L^2$ in the same way, . 
\end{proof}

\begin{remark}
When $\mu_1<0$, the local well-posedness also holds and, however, the global existence is not clear.  
\end{remark}

\begin{proof}[Proof of Corollary \ref{cor1.4}]
By the Strichartz estimates, Proposition \ref{prop2.3}, we have
\begin{align*}
	\| \mathcal{E}_{0}(t,0) v_0 + \mathcal{E}_{1}(t,0) v_1\|_{L^{p_1}([0,\infty): L_x^{2p_1})} \leq C(\norm{v_0}_{H^1} + \norm{v_1}_{L^2})
\end{align*}
and thus, if $\norm{v_0}_{H^1} + \norm{v_1}_{L^2} < \delta/C$, then we can take $T=\infty$ in Theorem \ref{thm1.1}. Therefore, we get the small data global existence. 
\end{proof}

\begin{corollary}
\label{cor3.1}
Let $v$ be a global solution to \eqref{NLKG} satisfying $\norm{v}_{L_t^{p_1}L_x^{2p_1}(0,\infty)}<\infty$.
Then it is valid that
\begin{align*}
	\| v(t) \|_{L^2} \cleq 
	\begin{cases}
	(1+t)^{\max\{\frac{1}{2}+\re\nu, 1-\frac{2\mu_1}{d-2}\}} &\text{ if } \mu\neq 1/4,
	\\
	(1+t)^{\max\{\frac{1}{2}, 1-\frac{2\mu_1}{d-2}\}}(1+\log (1+t))&\text{ if } \mu= 1/4,
	\end{cases}
\end{align*}
for all $t\in [0,\infty)$, where the implicit constant depends on $\norm{v_0}_{H^1}$, $\norm{v_1}_{L^2}$, and $\norm{v}_{L_t^{p_1}L_x^{2p_1}}$ and is independent of time. 
\end{corollary}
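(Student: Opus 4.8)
The plan is to observe that this is, in substance, the very $L^2$ bound already produced in the course of the proof of Theorem \ref{thm1.1}; the only genuinely new ingredient is that it is now asserted on all of $[0,\infty)$, which is legitimate precisely because we now assume the global space-time bound $\norm{v}_{L_t^{p_1}L_x^{2p_1}(0,\infty)}<\infty$ rather than only a finite-time one. Concretely, one would start from the Duhamel representation with the propagator $\mathcal{E}$,
\begin{align*}
	v(t) = \mathcal{E}_0(t,0)v_0 + \mathcal{E}_1(t,0)v_1 + \int_0^t \mathcal{E}_1(t,s)\,\frac{\lambda}{(1+s)^{\frac{2\mu_1}{d-2}}}\,|v(s)|^{\frac{4}{d-2}}v(s)\,ds ,
\end{align*}
and apply the $L^2$ estimate of Lemma \ref{lem2.1} (valid for $\mu>0$; when $\mu=0$ one has $\mathcal{E}_i=\mathcal{W}_i$, $\re\nu=1/2$, and the same bound follows from $\norm{\mathcal{W}_1(t-s)\psi}_{L^2}\cleq(1+|t-s|)\norm{\psi}_{L^2}$) to each term, together with the elementary identity $\norm{|v(s)|^{\frac{4}{d-2}}v(s)}_{L_x^2}=\norm{v(s)}_{L_x^{2p_1}}^{p_1}$.

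For $\mu\neq1/4$ this would yield
\begin{align*}
	\norm{v(t)}_{L^2}
	&\cleq (1+t)^{\frac12+\re\nu}\l(\norm{v_0}_{H^1}+\norm{v_1}_{L^2}\r)
	+ (1+t)^{\frac12+\re\nu}\int_0^t (1+s)^{\alpha}\,\norm{v(s)}_{L_x^{2p_1}}^{p_1}\,ds ,
\end{align*}
with $\alpha:=\tfrac12-\re\nu-\tfrac{2\mu_1}{d-2}$. The next step is to dispose of the $s$-integral according to the sign of $\alpha$: if $\alpha\leq0$ then $(1+s)^{\alpha}\leq1$, so the integral is controlled by $\norm{v}_{L_t^{p_1}L_x^{2p_1}(0,\infty)}^{p_1}$ and one gets $\norm{v(t)}_{L^2}\cleq(1+t)^{\frac12+\re\nu}$, which is the claimed exponent because $\alpha\leq0$ is equivalent to $\max\{\tfrac12+\re\nu,\,1-\tfrac{2\mu_1}{d-2}\}=\tfrac12+\re\nu$; if $\alpha>0$ then $(1+s)^{\alpha}\leq(1+t)^{\alpha}$ on $[0,t]$, so the integral is $\cleq(1+t)^{\alpha}\norm{v}_{L_t^{p_1}L_x^{2p_1}(0,\infty)}^{p_1}$ and one gets $\norm{v(t)}_{L^2}\cleq(1+t)^{\frac12+\re\nu+\alpha}=(1+t)^{1-\frac{2\mu_1}{d-2}}$, which is again the claimed exponent since $\alpha>0$ forces $\max\{\tfrac12+\re\nu,\,1-\tfrac{2\mu_1}{d-2}\}=1-\tfrac{2\mu_1}{d-2}$. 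Together the two cases give the bound for $\mu\neq1/4$, with implicit constant depending only on $\norm{v_0}_{H^1}$, $\norm{v_1}_{L^2}$ and $\norm{v}_{L_t^{p_1}L_x^{2p_1}(0,\infty)}$, and independent of $t$.

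For $\mu=1/4$ the same scheme would apply with the logarithmic version of Lemma \ref{lem2.1}: one bounds $1+\log\frac{1+t}{1+s}\leq1+\log(1+t)$ for $0\leq s\leq t$, factors $1+\log(1+t)$ out of the integral, and repeats the two-case argument now with $\alpha=\tfrac12-\tfrac{2\mu_1}{d-2}$, arriving at $\norm{v(t)}_{L^2}\cleq(1+t)^{\max\{\frac12,\,1-\frac{2\mu_1}{d-2}\}}(1+\log(1+t))$. I do not expect a serious obstacle: the whole matter reduces to Lemma \ref{lem2.1}, and the only points needing a little care are the casework on the sign of $\alpha$ and the verification that, when $\alpha\leq0$, the weight $(1+s)^{\alpha}$ indeed lets the time integral be absorbed into the finite global space-time norm — which is exactly where the hypothesis $\norm{v}_{L_t^{p_1}L_x^{2p_1}(0,\infty)}<\infty$ enters, in place of the finite-$T$ truncation used for Theorem \ref{thm1.1}.
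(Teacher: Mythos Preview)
Your proposal is correct and follows exactly the approach the paper takes: the paper's own proof simply says ``As the last in the proof of Theorem \ref{thm1.1}, we get the desired estimate,'' and the computation you spell out---Duhamel with $\mathcal{E}$, Lemma \ref{lem2.1} for each piece, then the casework on the sign of $\alpha=\tfrac12-\re\nu-\tfrac{2\mu_1}{d-2}$ to obtain the $\max$---is precisely that computation, now with $T=\infty$ justified by the assumed global space-time bound. Your extra remark handling $\mu=0$ via $\mathcal{E}_i=\mathcal{W}_i$ is a harmless addition the paper leaves implicit.
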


\begin{proof}
As the last in the proof of Theorem \ref{thm1.1}, we get the desired estimate. 
\end{proof}

Next, we prove the scattering and its asymptotic order for \eqref{NLKG}. 

\begin{proof}[Proof of Theorem \ref{thm1.3}]
If $v$ is the global solution to \eqref{NLKG} satisfying $\|v\|_{L_t^{p_1}L_x^{2p_1}(0,\infty)}< \infty$, then $v$ also satisfies 
the following integral equation, where we use $d \leq 5$ (see Appendix \ref{appB} for the details). 
\begin{align*}
	v(t) &= \mathcal{W}_0(t) v_0 + \mathcal{W}_1(t) v_1
	\\
	& + \int_{0}^{t} \mathcal{W}_1(t-s) \left( \frac{-\mu}{(1+s)^2}v(s)+  \frac{\lambda}{(1+s)^{\frac{2\mu_1}{d-2}}} |v(s)|^{\frac{4}{d-2}} v(s) \right)ds.
\end{align*}
Let $\overrightarrow{v}=(v,\partial_t v)$. Then, we have
\begin{align*}
	\mathcal{W}(-t)\overrightarrow{v}(t) &= \overrightarrow{v}(0)+ \int_{0}^{t} \mathcal{W}(-s)\overrightarrow{N}(s, v(s)) ds,
\end{align*}
where 
\begin{align*}
	\overrightarrow{N}(s,v):=
	\begin{pmatrix}
	0
	\\
	\frac{-\mu}{(1+s)^2} v(s)+  \frac{\lambda}{(1+s)^{\frac{2\mu_1}{d-2}}} |v(s)|^{\frac{4}{d-2}} v(s)
	\end{pmatrix}.
\end{align*}
For $0\leq \tau \leq t$, it holds from $\mu_1> 0$, which implies $1-2\mu_1/(d-2) <1$, that
\begin{align*}
	&\norm{\mathcal{W}(-t)\overrightarrow{v}(t)- \mathcal{W}(-\tau)\overrightarrow{v}(\tau)}_{\dot{H}^1\times L^2}
	\\
	&\cleq \norm{ \int_{\tau}^{t} \mathcal{W}_1(-s) \frac{\mu}{(1+s)^2} v(s)ds}_{\dot{H}^1}
	+\norm{ \int_{\tau}^{t} \mathcal{W}_0(-s) \frac{\mu}{(1+s)^2} v(s)ds}_{L^2}
	\\
	& \quad +\norm{ \int_{\tau}^{t} \mathcal{W}_1(-s) \frac{ |v(s)|^{\frac{4}{d-2}} v(s) }{(1+s)^{\frac{2\mu_1}{d-2}}}ds}_{\dot{H}^1}
	+\norm{ \int_{\tau}^{t} \mathcal{W}_0(-s) \frac{ |v(s)|^{\frac{4}{d-2}} v(s) }{(1+s)^{\frac{2\mu_1}{d-2}}}ds}_{L^2}
	\\
	&\cleq  \mu \int_{\tau}^{t} (1+s)^{-2} \norm{v(s)}_{L^2} ds
	+\int_{\tau}^{t} \frac{1}{(1+s)^{\frac{2\mu_1}{d-2}}}\norm{  |v(s)|^{\frac{4}{d-2}} v(s) }_{L^2}ds
	\\
	&\cleq  \mu \norm{ (1+s)^{-2} v}_{L_s^1L^2(\tau,t)} 
	+\norm{v}_{L^{p_1}L^{2p_1}(\tau,t)}^{p_1}
	\\
	&\to 0
\end{align*}
as $\tau,t\to \infty$ since $\mu \|(1+t)^{-2} v\|_{L_t^1L^2(0,\infty)}+\|v\|_{L^{p_1}L^{2p_1}(0,\infty)}^{p_1}<\infty$ by the assumption and Corollary \ref{cor3.1}. 

Thus, $\mathcal{W}(-t)\overrightarrow{v}(t)$ converges to some $\overrightarrow{v_+}$ in $\dot{H}^1 \times L^2$. Here, $\overrightarrow{v_+}$ is given by 
\begin{align*}
	\overrightarrow{v_+}= \overrightarrow{v}(0)+ \int_{0}^{\infty} \mathcal{W}(-s)\overrightarrow{N}(s, v(s)) ds.
\end{align*}
We set 
$\alpha:=\max\{\frac{1}{2}+\re\nu, 1-\frac{2\mu_1}{d-2}\}$. We consider the case $\mu \neq1/4$. 
Since $\mathcal{W}$ is a unitary operator on $\dot{H}^1 \times L^2$, by Lemma \ref{lem2.2} and Corollary \ref{cor3.1}, we obtain
\begin{align*}
	&\norm{\overrightarrow{v}(t) - \mathcal{W}(t)\overrightarrow{v_{+}}}_{\dot{H}^1 \times L^2}
	\\
	&=\norm{ \int_{t}^{\infty} \mathcal{W}(t-s)\overrightarrow{N}(s, v(s)) ds}_{\dot{H}^1 \times L^2}
	\\
	&\cleq \mu \int_{t}^{\infty} (1+s)^{-2} \norm{v(s)}_{L^2} ds
	+\int_{t}^{\infty} \frac{1}{(1+s)^{\frac{2\mu_1}{d-2}}}\norm{  |v(s)|^{\frac{4}{d-2}} v(s) }_{L^2}ds
	\\
	&\cleq \mu \int_{t}^{\infty} (1+s)^{-2+\alpha} ds 
	+(1+t)^{-\frac{2\mu_1}{d-2}} \int_{t}^{\infty} \norm{v}_{L_x^{2p_1}}^{p_1}ds
	\\
	&\cleq \mu (1+t)^{-1+\alpha} 
	+(1+t)^{-\frac{2\mu_1}{d-2}} \norm{v}_{L^{p_1}L^{2p_1}(t,\infty)}^{p_1}
	\\
	&= \mu (1+t)^{\max\{-\frac{1}{2}+\re \nu, -\frac{2\mu_1}{d-2}\}} 
	+(1+t)^{-\frac{2\mu_1}{d-2}} o_t(1),
\end{align*}
where $o_t(1)=\norm{v}_{L^{p_1}L^{2p_1}(t,\infty)}^{p_1}$. 
In the case of $\mu=1/4$, by the same argument, we have
\begin{align*}
	&\norm{\overrightarrow{v}(t) - \mathcal{W}(t)\overrightarrow{v_{+}}}_{\dot{H}^1 \times L^2}
	\\
	&\cleq \mu (1+t)^{\max\{-\frac{1}{2},-\frac{2\mu_1}{d-2}\}}(1+\log(1+t)) 
	+(1+t)^{-\frac{2\mu_1}{d-2}}o_t(1).
\end{align*}
This completes the proof. 
\end{proof}

\begin{proof}[Proof of Corollary {\ref{cor1.6}}]
Combining the argument in Corollary \ref{cor1.0} and the result in Theorem \ref{thm1.3}, we get the statement. Thus, we omit the details. 
\end{proof}


%

\appendix

\section{Energy estimates for the linear solution}

For the readers' convenience, we give the proof of Lemma \ref{lem2.1} since the independence of the initial time $t_0$ plays an important role to obtain the Strichartz estimates for $\mathcal{E}$. 
By the Plancherel theorem and the H\"{o}lder estimate, it holds that
\begin{align*}
	\| v_l(t) \|_{\dot{H}^{s}} 
	&\leq  \| |\xi|^s E_{0}(t,t_0,\xi) \widehat{v_{l,0}}(\xi)\|_{L_{\xi}^2} 
	+\| |\xi|^s E_{1}(t,t_0,\xi)\widehat{v_{l,1}}(\xi)\|_{L_{\xi}^2},
	\\
	\| \partial_t v_l(t) \|_{L^2} 
	&\leq  \| \dot{E_{0}}(t,t_0,\xi) \widehat{v_{l,0}}(\xi)\|_{L_{\xi}^2} 
	+\| \dot{E_{1}}(t,t_0,\xi)\widehat{v_{l,1}}(\xi)\|_{L_{\xi}^2},
\end{align*}
for $s=0,1$. 
Therefore, it is enough to estimate $E_{j}(t,t_0,\xi)$ and $\dot{E_{j}}(t,t_0,\xi)$ for $j=0,1$

\subsection{Some lemmas for the Bessel functions}
To estimate $E_{j}$ and $\dot{E_{j}}$, we use well-known estimates for $J_{\nu}$ and $Y_{\nu}$ as follows. 
See \cite{Wat}. 

\begin{lemma}
\label{lemA.1}
Let $\nu \in \mathbb{C}$. Then, there exists $N>0$ such that the estimates 
\begin{align*}
	|J_{\nu}(\tau)| \leq C\tau^{-\frac{1}{2}},
	\quad \text{ and } \quad
	|Y_{\nu}(\tau)| \leq C\tau^{-\frac{1}{2}}
\end{align*}
hold for $\tau \geq N$. 
\end{lemma}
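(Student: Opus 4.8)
The statement to prove is Lemma~\ref{lemA.1}: for any $\nu \in \mathbb{C}$, there is $N > 0$ with $|J_\nu(\tau)| \le C\tau^{-1/2}$ and $|Y_\nu(\tau)| \le C\tau^{-1/2}$ for $\tau \ge N$.

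\textbf{Approach.} The plan is to invoke the classical large-argument asymptotic expansions of the Bessel and Neumann functions, which are the standard tool from Watson's treatise \cite{Wat}. Recall that for fixed $\nu \in \mathbb{C}$ and $\tau \to \infty$ along the positive real axis one has
\begin{align*}
	J_\nu(\tau) &= \sqrt{\frac{2}{\pi \tau}} \left( \cos\left( \tau - \frac{\nu\pi}{2} - \frac{\pi}{4} \right) + O(\tau^{-1}) \right),
	\\
	Y_\nu(\tau) &= \sqrt{\frac{2}{\pi \tau}} \left( \sin\left( \tau - \frac{\nu\pi}{2} - \frac{\pi}{4} \right) + O(\tau^{-1}) \right),
\end{align*}
where the implied constants depend only on $\nu$. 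Here the trigonometric functions of the complex-shifted argument $\tau - \nu\pi/2 - \pi/4$ stay bounded as $\tau \to \infty$ through real values, since their imaginary part is the fixed quantity $-\operatorname{Im}(\nu)\pi/2$; hence $|\cos(\tau - \nu\pi/2 - \pi/4)| \le \cosh(\operatorname{Im}(\nu)\pi/2)$ and similarly for $\sin$. The main step is therefore just reading off: from the asymptotics, there exists $N = N(\nu)$ so that for $\tau \ge N$ the bracketed factor is bounded by a constant $C = C(\nu)$, which gives $|J_\nu(\tau)|, |Y_\nu(\tau)| \le C \tau^{-1/2}$.

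\textbf{Details and care.} First I would state precisely which asymptotic expansion I use and cite the relevant section of \cite{Wat} (the asymptotic expansions for Bessel functions of large argument; these are valid for all complex order $\nu$, with error terms depending on $\nu$). Then I would note that it suffices to prove the bound for $J_\nu$ and $J_{-\nu}$, since for $\nu \notin \mathbb{Z}$ the definition $Y_\nu = (J_\nu \cos(\nu\pi) - J_{-\nu})/\sin(\nu\pi)$ gives the bound for $Y_\nu$ directly (with a constant also depending on $1/|\sin(\nu\pi)|$), and the case $\nu \in \mathbb{Z}$ follows by the continuity $Y_n = \lim_{\nu \to n} Y_\nu$ together with the fact that the Neumann-function asymptotics above hold uniformly for $\nu$ in a neighborhood of $n$ — alternatively one just quotes the integer-order asymptotics directly. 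The bound $|\cos z| \le \cosh(\operatorname{Im} z)$ for the relevant $z = \tau - \nu\pi/2 - \pi/4$ is elementary and I would include it in one line.

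\textbf{Main obstacle.} There is no serious obstacle: the result is a routine consequence of standard special-function asymptotics. The only point requiring a little attention is the uniformity/continuity of the constant in $\nu$ near integer values (so that the $Y_n$ case is covered cleanly), and being explicit that all constants $C$, $N$ are allowed to depend on $\nu$ — which is exactly what the statement permits, since in the paper $\nu$ is a fixed function of the fixed parameter $\mu$. I would keep the proof to a few lines, citing \cite{Wat} for the asymptotic expansions and handling the elementary boundedness of the trigonometric factors by hand.
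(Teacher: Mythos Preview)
Your proposal is correct and matches the paper's approach: the paper's proof is simply a direct citation to the large-argument asymptotics in Watson \cite[7.21, (1) \& (2), p.~199]{Wat}, which are exactly the expansions you write out. Your additional remarks (boundedness of $\cos(\tau-\nu\pi/2-\pi/4)$ via $\cosh(\operatorname{Im}(\nu)\pi/2)$, and handling $Y_\nu$ for integer $\nu$) are fine elaborations but not needed beyond the citation.
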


\begin{proof}
See \cite[7.21, (1) \& (2) in p.199]{Wat}.
\end{proof}

\begin{lemma}
\label{lemA.2}
Let $R>0$ be arbitrarily fixed and $\tau \leq R$. Then, we have
\begin{align*}
	|J_{\nu}(\tau)| \leq C \tau^{\re \nu}
\end{align*}
for $\nu \in \mathbb{C}$. 
\end{lemma}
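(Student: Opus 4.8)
The plan is to read off the bound directly from the power series defining $J_{\nu}$. Writing
\[
	J_{\nu}(\tau) = \left(\frac{\tau}{2}\right)^{\nu} g_{\nu}(\tau),
	\qquad
	g_{\nu}(\tau) := \sum_{k=0}^{\infty} \frac{(-1)^{k}}{k!\,\Gamma(\nu+k+1)} \left(\frac{\tau}{2}\right)^{2k},
\]
which is valid for every $\nu\in\C$ and every $\tau>0$ (with the usual convention that a summand is $0$ when $\nu+k+1$ is a non-positive integer, so that the negative-integer case $J_{-n}=(-1)^{n}J_{n}$ is included), it suffices to show that $g_{\nu}$ is bounded on $[0,R]$. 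Indeed, for $0<\tau\le R$ one has $|(\tau/2)^{\nu}| = (\tau/2)^{\re\nu} = 2^{-\re\nu}\tau^{\re\nu}$, since $\tau$ is real and positive so that $\log(\tau/2)$ is real, and hence $|J_{\nu}(\tau)| \le 2^{-\re\nu}\,\tau^{\re\nu}\,\sup_{[0,R]}|g_{\nu}|$.

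To see that $g_{\nu}$ is bounded on $[0,R]$ I would note that its series has infinite radius of convergence: the (nonzero) coefficients $c_{k} := (-1)^{k}/(k!\,\Gamma(\nu+k+1))$ satisfy $|c_{k+1}/c_{k}| = 1/\bigl((k+1)\,|\nu+k+1|\bigr) \to 0$, so by the ratio test the series converges uniformly on compact subsets of $\R$; in particular $g_{\nu}$ is continuous on $[0,R]$, hence bounded there, say by $M=M(\nu,R)$. Setting $C := 2^{-\re\nu}M$, which depends only on $\nu$ and $R$ as in the statement, we obtain $|J_{\nu}(\tau)| \le C\,\tau^{\re\nu}$ for $0<\tau\le R$; at $\tau=0$ the inequality is vacuous when $\re\nu<0$ and trivially true when $\re\nu\ge 0$, so it holds for all $\tau\le R$.

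I do not expect a serious obstacle here; the only points that deserve a line of care are that $\Gamma$ has no zeros, so every denominator occurring with a nonzero numerator is nonzero (the at most one index $k$ with $\nu+k+1=0$ is harmless, the preceding summands being set to zero), and that $C$ is allowed to depend on $\nu$ and $R$. One could alternatively invoke the small-argument asymptotics $J_{\nu}(\tau)\sim(\tau/2)^{\nu}/\Gamma(\nu+1)$ as $\tau\to0^{+}$ from Watson together with continuity of $J_{\nu}$ on $(0,R]$, but the self-contained series argument above treats all $\nu\in\C$ uniformly (in particular the negative-integer case, where that leading asymptotic degenerates) and is shorter.
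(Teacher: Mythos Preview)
Your argument is correct and is precisely the approach the paper has in mind: the paper's entire proof is the sentence ``This estimate comes from the definition of $J_{\nu}$,'' and you have simply written out the details of that remark by factoring $(\tau/2)^{\nu}$ out of the defining series and bounding the remaining entire function on $[0,R]$. One small wording point: in your parenthetical about negative integers it is not that ``at most one'' term vanishes but that the first $n$ terms with $k\le n-1$ do when $\nu=-n$, since $1/\Gamma$ has zeros at all non-positive integers; this does not affect the argument.
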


\begin{proof}
This estimate comes from the definition of $J_{\nu}$.
\end{proof}

\begin{lemma}
\label{lemA.3}
We have
\begin{align*}
	|Y_{\nu}(\tau)| \leq C \tau^{-|\re\nu|}. 
\end{align*}
for $\nu \in \mathbb{C} \setminus \mathbb{Z}$. 
It is valid that 
\begin{align*}
	|Y_{n}(\tau)| \cleq \tau^{-n}
\end{align*}
for $n \in \mathbb{Z}_{\geq 1}$. 
Moreover, for $n \in \mathbb{Z}_{\geq 0}$, we also have
\begin{align*}
	Y_{-n}(\tau)&= \frac{2}{\pi} J_{-n}(\tau) \log \tau + f_{-n}(\tau)
\end{align*}
where $|f_{-n}|\cleq \tau^{-n}$ for $\tau \leq R$.
\end{lemma}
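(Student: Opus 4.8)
The plan is to reduce every assertion to the classical power‑series representations of the Neumann functions near the origin (for which see Watson \cite{Wat}) and to read off the leading term. As in Lemma \ref{lemA.2}, all the estimates below are understood on a fixed bounded interval $0<\tau\le R$, so the only point demanding any care is uniformity on that interval rather than mere behaviour as $\tau\to0$.

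For $\nu\in\C\setminus\Z$ I would start from $Y_\nu=\frac{\cos(\nu\pi)J_\nu-J_{-\nu}}{\sin(\nu\pi)}$ and apply Lemma \ref{lemA.2} both to $\nu$ and to $-\nu$, which gives $|J_\nu(\tau)|\cleq\tau^{\re\nu}$ and $|J_{-\nu}(\tau)|\cleq\tau^{-\re\nu}$ for $\tau\le R$; hence $|Y_\nu(\tau)|\cleq|\csc(\nu\pi)|\,(\tau^{\re\nu}+\tau^{-\re\nu})$. Since $R$ is fixed and $-|\re\nu|$ is the more negative of the exponents $\pm\re\nu$, on $0<\tau\le R$ one has $\tau^{\re\nu}+\tau^{-\re\nu}\cleq\tau^{-|\re\nu|}$, which is the first claimed bound. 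When $\nu$ is purely imaginary (the case $\mu>1/4$, for which indeed $\nu\notin\Z$), this is just the boundedness of $Y_\nu$ near the origin, consistent with $(\tau/2)^{\pm\nu}$ having modulus one.

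For integer order $n\ge1$ I would invoke the standard expansion of $Y_n$ about $\tau=0$, namely $Y_n(\tau)=\frac{2}{\pi}J_n(\tau)\log(\tau/2)-\frac{1}{\pi}\sum_{k=0}^{n-1}\frac{(n-k-1)!}{k!}(\tau/2)^{2k-n}-\frac{(\tau/2)^n}{\pi}\sum_{k=0}^{\infty}\bigl(\psi(k+1)+\psi(n+k+1)\bigr)\frac{(-\tau^2/4)^k}{k!\,(n+k)!}$. The finite sum has most singular term a multiple of $(\tau/2)^{-n}$, the logarithmic term is $O(\tau^n|\log\tau|)$ by Lemma \ref{lemA.2}, and the last series is $O(\tau^n)$; on $\tau\le R$ each of these is $\cleq\tau^{-n}$, which yields $|Y_n(\tau)|\cleq\tau^{-n}$.

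Finally, for $Y_{-n}$ with $n\ge0$ I would use $Y_{-n}=(-1)^nY_n$ together with $J_{-n}=(-1)^nJ_n$, so it suffices to split off the $\log\tau$ part of $Y_n$. Substituting $\log(\tau/2)=\log\tau-\log2$ in the expansion above (resp. in the analogous expansion of $Y_0$) rewrites $Y_n(\tau)=\frac{2}{\pi}J_n(\tau)\log\tau+(-1)^nf_{-n}(\tau)$, where $f_{-n}$ collects the finite singular sum, the remaining convergent power series, and the term $-\frac{2\log2}{\pi}J_n(\tau)$; by the previous paragraph $|f_{-n}(\tau)|\cleq\tau^{-n}$ for $\tau\le R$, and for $n=0$ the finite sum is absent so this is merely boundedness. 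The whole argument is elementary bookkeeping with known series; the only mildly delicate point, as already flagged, is upgrading asymptotics at $\tau=0$ to estimates uniform on $0<\tau\le R$, which is harmless because $R$ is fixed and is what forces the slightly lossy comparisons $\tau^{\pm\re\nu}\cleq\tau^{-|\re\nu|}$ and $\tau^{n}\cleq\tau^{-n}$ used above.
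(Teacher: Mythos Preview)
Your proof is correct and follows essentially the same approach as the paper: for $\nu\in\C\setminus\Z$ you use the definition of $Y_\nu$ together with Lemma \ref{lemA.2}, exactly as the paper indicates, and for integer order you invoke the classical series expansion of $Y_n$ from Watson \cite{Wat}, which is precisely the reference the paper cites. The only difference is that you have written out the bookkeeping that the paper leaves to the reader and to the citation.
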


\begin{proof}
We apply the definition of $Y_{\nu}$ and the above lemma for the estimates when $\nu \in \mathbb{C} \setminus \mathbb{Z}$. 
For the statements when $n \in \mathbb{Z}$, see \cite[3.582, (4) in p.73]{Wat}.
\end{proof}

\subsection{Preliminaries}
\label{appA.2}
We recall $\nu:=\frac{1}{2}\sqrt{1-4\mu}$ when $\mu \leq 1/4$ and $\nu:=\frac{i}{2}\sqrt{4\mu-1}$ when $\mu > 1/4$ and $\tau = (1+t)|\xi|$. 

Now, we have
\begin{align*}
	\dot{e_{+}} = |\xi| \left(\frac{1}{2} \tau^{-\frac{1}{2}} J_{\nu}(\tau) + \tau^{\frac{1}{2}} \dot{J_{\nu}}(\tau)\right),
	\quad
	\dot{e_{-}} = |\xi| \left(\frac{1}{2} \tau^{-\frac{1}{2}} Y_{\nu}(\tau) + \tau^{\frac{1}{2}} \dot{Y_{\nu}}(\tau)\right).
\end{align*} 
Thus, we obtain
\begin{align*}
	e_{+} \dot{e_{-}} - \dot{e_{+}} e_{-}
	&=\tau^{\frac{1}{2}} J_{\nu}  |\xi| \left(\frac{1}{2} \tau^{-\frac{1}{2}} Y_{\nu} + \tau^{\frac{1}{2}} \dot{Y_{\nu}}\right)
	-\tau^{\frac{1}{2}} Y_{\nu}  |\xi| \left(\frac{1}{2} \tau^{-\frac{1}{2}} J_{\nu} + \tau^{\frac{1}{2}} \dot{J_{\nu}}\right)
	\\
	&=\tau  |\xi| \left(J_{\nu}\dot{Y_{\nu}} - Y_{\nu}\dot{J_{\nu}}\right)
	\\
	&=\tau |\xi| \left( \frac{2}{\pi \tau}\right)
	\\
	&= \frac{2}{\pi}|\xi|
\end{align*}
where we used the Wronskian of $J_{\nu}$ and $Y_{\nu}$ (see \cite[3.63 (1) in p.76]{Wat}).

\begin{lemma}
\label{lemA.4}
There exists $N >0$ such that 
\begin{align*}
	&|e_{\pm}(t,\xi)| \cleq 1,
	\\
	&|\dot{e_{\pm}}(t,\xi)| \cleq |\xi|.
\end{align*}
hold for $(1+t)|\xi| \geq N$,
\end{lemma}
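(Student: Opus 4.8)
The plan is to reduce everything to the large-argument bounds of Lemma \ref{lemA.1}, using the factorizations $e_{\pm}(t,\xi) = \tau^{1/2} Z_\nu(\tau)$ with $\tau = (1+t)|\xi|$ and $Z_\nu \in \{J_\nu, Y_\nu\}$. For the bound on $e_{\pm}$ this is immediate: with $N$ the threshold furnished by Lemma \ref{lemA.1} one has $|Z_\nu(\tau)| \cleq \tau^{-1/2}$ for $\tau \geq N$, hence $|e_{\pm}(t,\xi)| = \tau^{1/2}|Z_\nu(\tau)| \cleq 1$ whenever $(1+t)|\xi| \geq N$.

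For the derivative I would start from the identity recorded in Subsection \ref{appA.2},
\begin{align*}
	\dot{e_{\pm}}(t,\xi) = |\xi|\left( \tfrac{1}{2}\tau^{-\frac{1}{2}} Z_\nu(\tau) + \tau^{\frac{1}{2}} \dot{Z_\nu}(\tau) \right),
\end{align*}
so that it suffices to bound the bracket by an absolute constant on $\{\tau \geq N\}$. The first term is $O(\tau^{-1})$ by Lemma \ref{lemA.1}, hence harmless. For the second term I need $|\dot{Z_\nu}(\tau)| \cleq \tau^{-1/2}$ at large argument; I would obtain this from the standard recurrences $\dot{J_\nu} = \tfrac12(J_{\nu-1} - J_{\nu+1})$ and $\dot{Y_\nu} = \tfrac12(Y_{\nu-1} - Y_{\nu+1})$ (valid for every complex order, integers included) together with Lemma \ref{lemA.1} applied to the functions of order $\nu \pm 1$, after enlarging $N$ so that Lemma \ref{lemA.1} holds simultaneously for the three orders $\nu-1,\nu,\nu+1$. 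Then $\tau^{1/2}|\dot{Z_\nu}(\tau)| \cleq 1$, and adding the two contributions yields $|\dot{e_{\pm}}(t,\xi)| \cleq |\xi|(\tau^{-1}+1) \cleq |\xi|$ on $\{(1+t)|\xi| \geq N\}$.

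I do not expect a genuine obstacle here. The only point needing mild care is that Lemma \ref{lemA.1} attaches its threshold to a fixed order, so one should derive the argument-derivative bound through the recurrence relations (staying within the Watson toolbox already invoked) rather than by differentiating an asymptotic expansion term by term, and then take the maximum of the thresholds associated with $\nu-1,\nu,\nu+1$ (enlarging it to be $\geq 1$ if one wants $\tau^{-1}\leq 1$ directly). Throughout, the implicit constants are permitted to depend on the fixed parameter $\nu$.
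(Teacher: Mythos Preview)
Your argument is correct and follows essentially the same route as the paper: the paper also uses Lemma~\ref{lemA.1} directly for $|e_\pm|$, and for $|\dot{e_\pm}|$ expands via $\dot{e_\pm}=|\xi|\bigl(\tfrac12\tau^{-1/2}Z_\nu+\tau^{1/2}\dot Z_\nu\bigr)$ and reduces $\dot Z_\nu$ to $Z_{\nu\pm1}$ through the recurrence $\dot J_\nu=\tfrac12(J_{\nu-1}-J_{\nu+1})$ (with the analogous formula for $Y_\nu$ implicit in ``the other cases are treated similarly''). Your remark about enlarging $N$ to accommodate the orders $\nu-1,\nu,\nu+1$ makes explicit a point the paper leaves tacit.
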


\begin{proof}
We only estimate $e_{+}$. 
The other cases are treated similarly. 
Now, there exists $N>0$ such that $|J_{\nu}(\tau)| \leq C\tau^{-\frac{1}{2}}$ if $\tau \geq N$.
Therefore, the first inequality comes from
\begin{align*}
	|e_{+}(t,\xi)| =\tau^{\frac{1}{2}} |J_{\nu}(\tau)| \cleq \tau^{\frac{1}{2}} \tau^{-\frac{1}{2}} \ceq 1.
\end{align*}
We prove the second inequality. 
Since $\dot{J_{\nu}}= \frac{1}{2}(J_{\nu-1}- J_{\nu+1})$ for $\nu \in \mathbb{C}$,  
we obtain
\begin{align*}
	|\dot{e_{+}}(t,\xi)| 
	&\leq |\xi| \left(\frac{1}{2} \tau^{-\frac{1}{2}} |J_{\nu}| + \tau^{\frac{1}{2}} |\dot{J_{\nu}}|\right)
	\\
	&\cleq |\xi| \left\{ \tau^{-\frac{1}{2}} |J_{\nu}| + \tau^{\frac{1}{2}}(|J_{\nu-1}|+|J_{\nu+1}|)\right\}
	\\
	&\cleq |\xi| \left( \tau^{-\frac{1}{2}-\frac{1}{2}} + \tau^{\frac{1}{2} -\frac{1}{2}}+ \tau^{\frac{1}{2} -\frac{1}{2}}\right)
	\\
	&\cleq |\xi|.
\end{align*}
\end{proof}

\begin{lemma}
\label{lemA.5}
Let $R>0$. 
If $\mu \neq 1/4$, i.e., $\nu \neq 0$, then we have the following estimates.  
\begin{align*}
	&|e_{\pm}(t,\xi)| \cleq ((1+t)|\xi|)^{\frac{1}{2} \pm \re \nu},
	\\
	&|\dot{e_{\pm}}(t,\xi)| \cleq |\xi|((1+t)|\xi|)^{-\frac{1}{2} \pm \re \nu}
\end{align*}
for $(1+t)|\xi|\leq R$. 
If  $\mu = 1/4$, i.e., $\nu = 0$, then we have the following estimates.
\begin{align*}
	&|e_{+}(t,\xi)| \cleq ((1+t)|\xi|)^{\frac{1}{2}},  
	\\
	&|e_{-}(t,\xi)| \cleq ((1+t)|\xi|)^{\frac{1}{2}}(1+|\log((1+t)|\xi|)|),
	\\
	&|\dot{e_{+}}(t,\xi)| \cleq |\xi|((1+t)|\xi|)^{-\frac{1}{2} },
	\\
	&|\dot{e_{-}}(t,\xi)| \cleq |\xi|((1+t)|\xi|)^{-\frac{1}{2} }(1+|\log((1+t)|\xi|)|)
\end{align*}
 for $(1+t)|\xi|\leq R$. 
\end{lemma}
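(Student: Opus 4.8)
The plan is to prove the small-$\tau$ estimates of Lemma \ref{lemA.5} directly from the series expansions of the Bessel and Neumann functions, exactly as in the proofs of Lemmas \ref{lemA.2} and \ref{lemA.3}, and then differentiate using the recurrence relations $\dot J_\nu = \tfrac12(J_{\nu-1}-J_{\nu+1})$ and $\dot Y_\nu = \tfrac12(Y_{\nu-1}-Y_{\nu+1})$, combined with the explicit formulas for $\dot e_\pm$ recorded just above the statement. Throughout I write $\tau=(1+t)|\xi|$ and work in the regime $\tau\le R$.

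First I would treat the case $\mu\neq 1/4$, i.e.\ $\nu\neq 0$. Since $e_+(t,\xi)=\tau^{1/2}J_\nu(\tau)$, Lemma \ref{lemA.2} gives $|e_+|\lesssim \tau^{1/2}\tau^{\re\nu}=\tau^{1/2+\re\nu}$. For $e_-=\tau^{1/2}Y_\nu(\tau)$, when $\nu\notin\Z$ Lemma \ref{lemA.3} gives $|Y_\nu(\tau)|\lesssim \tau^{-|\re\nu|}$; since we only use this for $\mu\ge 0$ we have $\re\nu\ge 0$ (recall $\nu=\tfrac12\sqrt{1-4\mu}\in[0,1/2)$ when $0\le\mu<1/4$, and $\nu$ is purely imaginary, hence $\re\nu=0$, when $\mu>1/4$), so $|Y_\nu(\tau)|\lesssim \tau^{-\re\nu}$ and $|e_-|\lesssim \tau^{1/2-\re\nu}$; the integer cases $\nu=1,2,\dots$ are handled by the bound $|Y_n(\tau)|\lesssim\tau^{-n}$ in Lemma \ref{lemA.3}, which is even better. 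For the derivatives I would plug the recurrences into
\begin{align*}
	\dot e_+ = |\xi|\Bigl(\tfrac12\tau^{-1/2}J_\nu + \tau^{1/2}\cdot\tfrac12(J_{\nu-1}-J_{\nu+1})\Bigr),
	\quad
	\dot e_- = |\xi|\Bigl(\tfrac12\tau^{-1/2}Y_\nu + \tau^{1/2}\cdot\tfrac12(Y_{\nu-1}-Y_{\nu+1})\Bigr),
\end{align*}
and bound each term by Lemmas \ref{lemA.2}--\ref{lemA.3}: the dominant contribution to $\dot e_+$ is $|\xi|\,\tau^{-1/2}\tau^{\re\nu}=|\xi|\tau^{-1/2+\re\nu}$ (the $J_{\nu\pm1}$ terms give $|\xi|\tau^{1/2}\tau^{\re\nu\mp1}$, which is no larger for $\tau\le R$ since $\re\nu<1$), and similarly for $\dot e_-$ the worst term is $|\xi|\tau^{-1/2}\tau^{-\re\nu}=|\xi|\tau^{-1/2-\re\nu}$, using $|Y_{\nu\pm1}(\tau)|\lesssim\tau^{-\re\nu\mp1}$ (and the integer-index cases again only improve matters).

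For the case $\mu=1/4$, i.e.\ $\nu=0$, the bound $|e_+|\lesssim\tau^{1/2}$ is immediate from $|J_0(\tau)|\lesssim 1$ (Lemma \ref{lemA.2} with $\re\nu=0$). For $e_-$ I would use the representation $Y_0(\tau)=\tfrac{2}{\pi}J_0(\tau)\log\tau + f_0(\tau)$ from Lemma \ref{lemA.3} (the $n=0$ case), with $|f_0(\tau)|\lesssim 1$ for $\tau\le R$, giving $|Y_0(\tau)|\lesssim 1+|\log\tau|$ and hence $|e_-|\lesssim\tau^{1/2}(1+|\log\tau|)$. For the derivatives, $\dot e_+ = |\xi|(\tfrac12\tau^{-1/2}J_0 + \tau^{1/2}\dot J_0)$ with $\dot J_0 = -J_1$ and $|J_1(\tau)|\lesssim\tau$, so $|\dot e_+|\lesssim |\xi|(\tau^{-1/2}+\tau^{3/2})\lesssim|\xi|\tau^{-1/2}$; for $\dot e_-$ I would differentiate the log-representation, using $\dot Y_0=-Y_1$ together with $|Y_1(\tau)|\lesssim\tau^{-1}$, to get $|\dot e_-|\lesssim|\xi|\tau^{-1/2}(1+|\log\tau|)$. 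The main obstacle — really the only subtlety — is bookkeeping the behavior of $Y_\nu$ near $\nu=0$ and at integer $\nu$: one must be careful that the estimate $|Y_\nu(\tau)|\lesssim\tau^{-|\re\nu|}$ degenerates exactly at $\nu=0$ into the logarithmic bound, which is why the case split in the statement is unavoidable; everything else is a direct substitution into the formulas for $e_\pm$ and $\dot e_\pm$ already derived.
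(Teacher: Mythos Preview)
Your approach is exactly the paper's: apply Lemma~\ref{lemA.2} (resp.\ Lemma~\ref{lemA.3}) to bound $J_\nu$ (resp.\ $Y_\nu$) for small $\tau$, then handle $\dot e_\pm$ via the recurrences $\dot J_\nu=\tfrac12(J_{\nu-1}-J_{\nu+1})$ and $\dot Y_\nu=\tfrac12(Y_{\nu-1}-Y_{\nu+1})$. The paper in fact only writes out the $e_+$ case for $\nu\neq0$ and declares the rest ``similar,'' so you have supplied more detail than the original.

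One slip to fix: your claimed bounds $|J_{\nu\pm1}|\lesssim\tau^{\re\nu\mp1}$ and $|Y_{\nu-1}|\lesssim\tau^{-\re\nu+1}$ have the signs flipped. Lemma~\ref{lemA.2} gives $|J_{\nu\pm1}|\lesssim\tau^{\re\nu\pm1}$, and Lemma~\ref{lemA.3} gives $|Y_{\nu-1}|\lesssim\tau^{-|\re\nu-1|}=\tau^{\re\nu-1}$ when $\re\nu\in[0,1)$ (which is your regime). With the correct bounds the contributions to $\dot e_\pm$ are $|\xi|\tau^{1/2}\tau^{\re\nu-1}=|\xi|\tau^{-1/2+\re\nu}$ from $J_{\nu-1}$ and $|\xi|\tau^{1/2}\tau^{\re\nu-1}=|\xi|\tau^{-1/2+\re\nu}$ from $Y_{\nu-1}$; these are still dominated by $|\xi|\tau^{-1/2\pm\re\nu}$ (using $\re\nu\ge0$), so your final estimates are unaffected. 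Also, the restriction to $\mu\ge0$ is unnecessary: by the paper's definition of $\nu$ one has $\re\nu\ge0$ for all $\mu\in\R$.
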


\begin{proof}
We only estimate $e_{+}$ in the case of $\mu \neq 1/4$, i.e., $\nu \neq 0$. 
The other cases are treated similarly. 
The first inequality is
\begin{align*}
	|e_{+}(t,\xi)| =\tau^{\frac{1}{2}} |J_{\nu}(\tau)| \cleq \tau^{\frac{1}{2}} \tau^{\re \nu}.
\end{align*}
Since $\dot{J_{\nu}}= \frac{1}{2}(J_{\nu-1}- J_{\nu+1})$ for $\nu \in \mathbb{C}$,  
we obtain the second inequality as follows. 
\begin{align*}
	|\dot{e_{+}}(t,\xi)| 
	&\leq |\xi| \left(\frac{1}{2} \tau^{-\frac{1}{2}} |J_{\nu}| + \tau^{\frac{1}{2}} |\dot{J_{ \nu}}|\right)
	\\
	&\cleq |\xi| \left\{ \tau^{-\frac{1}{2}} |J_{\nu}| + \tau^{\frac{1}{2}}(|J_{\nu-1}|+|J_{\nu+1}|)\right\}
	\\
	&\cleq |\xi| \left( \tau^{-\frac{1}{2}+\re \nu} + \tau^{\frac{1}{2}+\re \nu-1}+ \tau^{\frac{1}{2}+\re \nu+1}\right)
	\\
	&\cleq |\xi| \left( \tau^{-\frac{1}{2}+\re \nu} + \tau^{\frac{3}{2}+ \re \nu}\right)
	\\
	&\cleq |\xi| \tau^{-\frac{1}{2}+\re \nu}.
\end{align*}
\end{proof}

\begin{lemma}
\label{lemA.6}
Let $R>0$ and  $\tau:=(1+t)|\xi|$, $\tau_0:=(1+t_0)|\xi|$, and $\tau,\tau_0 \leq R$. 
If $\mu = 1/4$, i.e., $\nu = 0$, then we have the following estimates.  
\begin{align*}
	&|e_{+}(t,\xi) \dot{e_{-}}(t_0,\xi) - \dot{e_{+}}(t_0,\xi) e_{-}(t,\xi)|
	\cleq \tau^{\frac{1}{2}} \tau_{0}^{-\frac{1}{2}} \left(1+ \left| \log \frac{\tau}{\tau_0} \right| \right),
	\\
	&|e_{+}(t_0,\xi) e_{-}(t,\xi) - e_{+}(t,\xi) e_{-}(t_0,\xi)|
	\cleq \tau^{\frac{1}{2}} \tau_{0}^{\frac{1}{2}} \left(1+ \left| \log \frac{\tau}{\tau_0} \right| \right),
	\\
	&|\dot{e_{+}}(t,\xi) \dot{e_{-}}(t_0,\xi) - \dot{e_{+}}(t_0,\xi) \dot{e_{-}}(t,\xi)|
	\cleq \tau^{-\frac{1}{2}} \tau_{0}^{-\frac{1}{2}} \left(1+ \left| \log \frac{\tau}{\tau_0} \right| \right).
\end{align*}
\end{lemma}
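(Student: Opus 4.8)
The plan is to exploit that at $\nu=0$ the Bessel functions $J_0,Y_0$ are as simple as possible: $J_0$ is entire, and the only singular behaviour of $Y_0$ as $\tau\to0$ is a single logarithm. By the $n=0$ case of Lemma~\ref{lemA.3},
\begin{align*}
	Y_0(\tau)=\frac{2}{\pi}J_0(\tau)\log\tau+f_0(\tau),
\end{align*}
with $f_0$ entire (a power series in $\tau^2$), so $f_0$ and $f_0'$ are bounded on $[0,R]$. In terms of $e_{\pm}$ this reads, suppressing $\xi$,
\begin{align*}
	e_{-}(t)=\frac{2}{\pi}\log\big((1+t)|\xi|\big)\,e_{+}(t)+h_{+}(t),\qquad h_{+}(t):=\big((1+t)|\xi|\big)^{\frac12}f_0\big((1+t)|\xi|\big),
\end{align*}
and, since $\frac{d}{dt}\log((1+t)|\xi|)=\frac{1}{1+t}$, differentiating in $t$,
\begin{align*}
	\dot{e_{-}}(t)=\frac{2}{\pi}\log\big((1+t)|\xi|\big)\,\dot{e_{+}}(t)+\frac{2}{\pi}\,\frac{1}{1+t}\,e_{+}(t)+\dot{h_{+}}(t).
\end{align*}
As in the proof of Lemma~\ref{lemA.5}, the auxiliary function $h_{+}$ obeys, for $(1+t)|\xi|\le R$, the same bounds as $e_{+}$, namely $|h_{+}(t)|\cleq((1+t)|\xi|)^{1/2}$ and $|\dot{h_{+}}(t)|\cleq|\xi|((1+t)|\xi|)^{-1/2}$, since $h_{+}(t)=\tau^{1/2}f_0(\tau)$ with $f_0,f_0'$ bounded on $[0,R]$.

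Next I would substitute these identities, at times $t$ and $t_0$, into the three combinations in the statement (which are precisely the numerators of $E_0$, $E_1$ and $\dot{E_0}$). The key point is that the terms proportional to $\log((1+t)|\xi|)$ and $\log((1+t_0)|\xi|)$ enter with opposite signs and collapse to the single factor $\log\frac{(1+t)|\xi|}{(1+t_0)|\xi|}=\log\frac{1+t}{1+t_0}=\log\frac{\tau}{\tau_0}$, which is $|\xi|$-independent and hence bounded. For instance,
\begin{align*}
	e_{+}(t)\dot{e_{-}}(t_0)-\dot{e_{+}}(t_0)e_{-}(t)
	=\frac{2}{\pi}\,\frac{1}{1+t_0}\,e_{+}(t)e_{+}(t_0)
	-\frac{2}{\pi}\Big(\log\frac{\tau}{\tau_0}\Big)e_{+}(t)\dot{e_{+}}(t_0)
	+\Big(e_{+}(t)\dot{h_{+}}(t_0)-\dot{e_{+}}(t_0)h_{+}(t)\Big),
\end{align*}
and the rewritings of $e_{+}(t_0)e_{-}(t)-e_{+}(t)e_{-}(t_0)$ and of $\dot{e_{+}}(t)\dot{e_{-}}(t_0)-\dot{e_{+}}(t_0)\dot{e_{-}}(t)$ have the same shape: a bounded multiple of $\log(\tau/\tau_0)$ times a product of two factors of $e_{+}$-type (that is, $e_{+}$, $\dot{e_{+}}$, $h_{+}$ or $\dot{h_{+}}$, each at $t$ or $t_0$), plus a difference of two such products carrying no logarithm, plus at most two further terms in which a factor $\frac{1}{1+t}$ or $\frac{1}{1+t_0}$ replaces the logarithm. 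I would then bound every piece directly by the elementary estimates $|e_{+}|\cleq\tau^{1/2}$, $|\dot{e_{+}}|\cleq|\xi|\tau^{-1/2}$ of Lemma~\ref{lemA.5}, the same bounds for $h_{+},\dot{h_{+}}$, and $\frac{1}{1+t}=|\xi|/\tau$, $\frac{1}{1+t_0}=|\xi|/\tau_0$; only the logarithmic piece contributes the factor $1+|\log(\tau/\tau_0)|$, and summing gives the three inequalities of Lemma~\ref{lemA.6}.

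The only genuinely non-routine step is the cancellation just described: one must verify that the logarithmic singularity of $Y_0$ reaches the numerators of $E_0,E_1,\dot{E_0}$ only through the difference $\log\tau-\log\tau_0$. The brute-force estimate --- bounding $e_{-}(t)$ by $\tau^{1/2}(1+|\log\tau|)$, $\dot{e_{-}}(t_0)$ by $|\xi|\tau_0^{-1/2}(1+|\log\tau_0|)$, and then applying the triangle inequality --- would leave the factor $1+|\log\tau|+|\log\tau_0|$, which is \emph{not} bounded as $|\xi|\to0$ and would therefore be useless for the subsequent $L^2$ and Strichartz estimates for $\mathcal{E}$; making the decomposition $e_{-}=\frac{2}{\pi}(\log\tau)e_{+}+h_{+}$ explicit is exactly what replaces it by the admissible, $|\xi|$-independent factor $\log(\tau/\tau_0)$. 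All the remaining estimates reduce to Lemmas~\ref{lemA.4} and~\ref{lemA.5}.
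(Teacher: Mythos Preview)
Your proposal is correct and takes essentially the same approach as the paper: both use the decomposition $Y_0(\tau)=\frac{2}{\pi}J_0(\tau)\log\tau+f_0(\tau)$ from Lemma~\ref{lemA.3} to exhibit the cancellation $\log\tau-\log\tau_0=\log(\tau/\tau_0)$, then bound the remaining terms by the elementary $\tau^{\pm1/2}$ estimates. The only difference is organizational---you perform the substitution once at the level of $e_\pm$ and differentiate, while the paper first expands $e_+(t)\dot{e_-}(t_0)-\dot{e_+}(t_0)e_-(t)$ in terms of $J_0,Y_0,\dot{J_0},\dot{Y_0}$ and then substitutes in each piece separately---but the content is identical.
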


\begin{proof}
We only prove the first inequality. The others can be shown in the similar way. We have
\begin{align*}
	&|e_{+}(t,\xi) \dot{e_{-}}(t_0,\xi) - \dot{e_{+}}(t_0,\xi) e_{-}(t,\xi)|
	\\
	&=
	\left| \tau^{\frac{1}{2}} J_{0}(\tau) \left(\frac{1}{2}\tau_0^{-\frac{1}{2}}Y_{0}(\tau_0)+\tau_0^{\frac{1}{2}}\dot{Y_0}(\tau_0)\right)
	-\left(\frac{1}{2}\tau_0^{-\frac{1}{2}}J_{0}(\tau_0)+\tau_0^{\frac{1}{2}}\dot{J_0}(\tau_0)\right) \tau^{\frac{1}{2}} Y_{0}(\tau)\right|
	\\
	&\cleq  \tau^{\frac{1}{2}} \tau_{0}^{-\frac{1}{2}}
	\left| J_{0}(\tau) Y_{0}(\tau_0)
	-J_{0}(\tau_0) Y_{0}(\tau)\right|
	+ \tau^{\frac{1}{2}} \tau_{0}^{\frac{1}{2}}
	\left| J_{0}(\tau) \dot{Y_{0}}(\tau_0)
	-\dot{J_{0}}(\tau_0) Y_{0}(\tau)\right|.
\end{align*}
For the first term, by Lemma \ref{lemA.3}, we obtain
\begin{align*}
	\left| J_{0}(\tau) Y_{0}(\tau_0)-J_{0}(\tau_0) Y_{0}(\tau)\right|
	&\cleq \left| J_{0}(\tau) J_{0}(\tau_0) (\log\tau_0  -  \log\tau ) \right| +1
	\\
	&\cleq  \left| \log \frac{\tau}{\tau_0} \right| +1.
\end{align*}
Since we have $\dot{J_0}=J_{-1}$ and $\dot{Y_0}=Y_{-1}$, for the second term, it holds from  Lemma \ref{lemA.3} that
\begin{align*}
	\left| J_{0}(\tau) \dot{Y_{0}}(\tau_0)-\dot{J_{0}}(\tau_0) Y_{0}(\tau)\right|
	&\cleq \left| J_{0}(\tau) J_{-1}(\tau_0) \log\tau_0-J_{-1}(\tau_0) Y_{0}(\tau)\right| +\tau_0^{-1}
	\\
	&\cleq | J_{0}(\tau) J_{-1}(\tau_0)| \left| \log \frac{\tau}{\tau_0} \right|+\tau_0^{-1}
	\\
	&\cleq \tau_0^{-1}  \left( \left| \log \frac{\tau}{\tau_0} \right|+1\right),
\end{align*}
where we also use Lemma \ref{lemA.2}. Combining these estimates, we get
\begin{align*}
	|e_{+}(t,\xi) \dot{e_{-}}(t_0,\xi) - \dot{e_{+}}(t_0,\xi) e_{-}(t,\xi)|
	\cleq \tau^{\frac{1}{2}} \tau_{0}^{-\frac{1}{2}} \left(1+ \left| \log \frac{\tau}{\tau_0} \right| \right).
\end{align*}
\end{proof}


\subsection{Estimates in the case of $\mu \neq 1/4$}
\label{appA.3}

We first consider $\mu \neq 1/4$, i.e., $\nu \neq 0$. 

Take large positive number $N$ such that the above estimate in Lemma \ref{lemA.4} holds if $\tau \geq N$.  
We set $\tau_0=(1+t_0)|\xi|$. 

\noindent{\bf Case 1-1.} $\tau \geq \tau_0 \geq N$.

By Lemma \ref{lemA.4} and $(e_{+} \dot{e_{-}} - \dot{e_{+}} e_{-})(t_0) \ceq |\xi|$, we get
\begin{align*}
	|E_{0}(t,t_0,\xi)| 
	\cleq |\xi|^{-1} (|e_{+}(t,\xi)||\dot{e_{-}}(t_0,\xi)| + |\dot{e_{+}}(t_0,\xi)||e_{-}(t,\xi)|) 
	\cleq 1
\end{align*}
and 
\begin{align*}
	|E_{1}(t,t_0,\xi)| 
	&\cleq |\xi|^{-1} (|e_{+}(t_0,\xi)||e_{-}(t,\xi)| + |e_{+}(t,\xi)||e_{-}(t_0,\xi)|) 
	\\
	&\cleq |\xi|^{-1}
	\cleq  (1+t)^{\frac{1}{2} + \re \nu } (1+t_0)^{\frac{1}{2} - \re \nu } .
\end{align*}
Moreover, we also have
\begin{align*}
	|\dot{E_{0}}(t,t_0,\xi)| 
	\cleq |\xi|
	\text{ and }
	|\dot{E_{1}}(t,t_0,\xi)| 
	\cleq 1.
\end{align*}

%
\noindent{\bf Case 1-2.} $\tau \geq N \geq \tau_0$. 
%
%

In this case, we have $|\xi| \leq N$ and
\begin{align*}
	|E_{0}(t,t_0,\xi)| 
	&\cleq |\xi|^{-1} (|e_{+}(t,\xi)||\dot{e_{-}}(t_0,\xi)| + |\dot{e_{+}}(t_0,\xi)||e_{-}(t,\xi)|) 
	\\
	&\cleq   \tau_0^{-\frac{1}{2} - \re \nu}
	+  \tau_0^{-\frac{1}{2} + \re \nu}
	\\
	&\cleq   \tau_0^{-\frac{1}{2} - \re \nu}
	\\
	&\ceq   (1+t_0)^{-\frac{1}{2} -\re \nu} |\xi|^{-\frac{1}{2} - \re \nu}
	\\
	&\cleq 
	\begin{cases}
	|\xi|^{-\frac{1}{2} -\re \nu}
	\\
	(1+t_0)^{-\frac{1}{2} - \re \nu} (1+t)^{\frac{1}{2} + \re \nu} 
	\end{cases},
\end{align*}
where we use $(1+t_0)^{-1}\leq 1$, $|\xi|^{-1} \cleq 1+t$ since $\tau \geq N$, and $1+t \geq 1+t_0$. 
We also estimate $E_1$ as follows.
\begin{align*}
	|E_{1}(t,t_0,\xi)| 
	&\cleq |\xi|^{-1} (|e_{+}(t_0,\xi)||e_{-}(t,\xi)| + |e_{+}(t,\xi)||e_{-}(t_0,\xi)|) 
	\\
	&\cleq |\xi|^{-1} (|e_{+}(t_0,\xi)| + |e_{-}(t_0,\xi)|)
	\\
	&\cleq |\xi|^{-1} (  \tau_0^{\frac{1}{2} + \re \nu}  +  \tau_0^{\frac{1}{2} - \re \nu}  )
	\\
	&\cleq |\xi|^{-1} \tau_0^{\frac{1}{2} - \re \nu}
	\\
	&\ceq   
	 (1+t_0)^{\frac{1}{2} - \re \nu}  |\xi|^{-\frac{1}{2}- \re \nu}
	\\
	&\cleq  
	(1+t_0)^{\frac{1}{2} - \re \nu} (1+t)^{\frac{1}{2}+ \re \nu}.
\end{align*}
We also have 
\begin{align*}
	|\dot{E_{0}}(t,t_0,\xi)| 
	&\cleq |\xi|^{-1} (|\dot{e_{+}}(t,\xi)||\dot{e_{-}}(t_0,\xi)| + |\dot{e_{+}}(t_0,\xi)||\dot{e_{-}}(t,\xi)|) 
	\\
	&\cleq |\xi|(|\dot{e_{-}}(t_0,\xi)| + |\dot{e_{+}}(t_0,\xi)|)
	\\
	&\cleq |\xi|\tau_0^{-\frac{1}{2} - \re \nu}
	+ |\xi| \tau_0^{-\frac{1}{2} + \re \nu} 
	\\
	&\cleq |\xi|\tau_0^{-\frac{1}{2} - \re \nu}
	\\
	&\ceq (1+t_0)^{-\frac{1}{2} - \re \nu} |\xi|^{\frac{1}{2} - \re \nu}
	\\
	&\cleq (1+t_0)^{-\frac{1}{2} - \re \nu} (1+t)^{-\frac{1}{2} + \re \nu}.
\end{align*}
Moreover, we have
\begin{align*}
	|\dot{E_{1}}(t,t_0,\xi)| 
	&\cleq |\xi|^{-1} (|e_{+}(t_0,\xi)||\dot{e_{-}}(t,\xi)| + |\dot{e_{+}}(t,\xi)||e_{-}(t_0,\xi)|) 
	\\
	&\cleq  \tau_0^{\frac{1}{2}+\re \nu} + \tau_0^{\frac{1}{2}-\re \nu}
	\\
	&\cleq \tau_0^{\frac{1}{2}-\re \nu}
	\\
	&\ceq 
	 (1+t_0)^{\frac{1}{2}-\re \nu}|\xi|^{\frac{1}{2}-\re \nu}
	\\
	&\cleq 
	(1+t_0)^{\frac{1}{2}-\re \nu}(1+t)^{-\frac{1}{2}+\re \nu}.
\end{align*}
\noindent{\bf Case 1-3.} $N \geq \tau \geq \tau_0$. 

We have
\begin{align*}
	|E_{0}(t,t_0,\xi)| 
	&\cleq |\xi|^{-1} (|e_{+}(t,\xi)||\dot{e_{-}}(t_0,\xi)| + |\dot{e_{+}}(t_0,\xi)||e_{-}(t,\xi)|) 
	\\
	&\cleq |\xi|^{-1}( \tau^{\frac{1}{2}+\re \nu}
	|\xi|\tau_0^{-\frac{1}{2} - \re \nu} 
	+|\xi|\tau_0^{-\frac{1}{2} + \re \nu} \tau^{\frac{1}{2} - \re \nu})
	\\
	&\ceq \tau^{\frac{1}{2}+\re \nu}
	\tau_0^{-\frac{1}{2} - \re \nu}
	+\tau_0^{-\frac{1}{2} + \re \nu} \tau^{\frac{1}{2} - \re \nu}
	\\
	&\cleq \tau^{\frac{1}{2}+\re \nu}
	\tau_0^{-\frac{1}{2} - \re \nu}
	\\
	&\ceq (1+t)^{\frac{1}{2}+\re \nu} (1+t_0)^{-\frac{1}{2} - \re \nu}
	\\
	&\cleq (1+t)^{\frac{1}{2}+\re \nu}  
	\\
	&\cleq |\xi|^{-\frac{1}{2}-\re \nu}.
\end{align*}

We also estimate $E_1$ as follows.
\begin{align*}
	|E_{1}(t,t_0,\xi)| 
	&\cleq |\xi|^{-1} (|e_{+}(t_0,\xi)||e_{-}(t,\xi)| + |e_{+}(t,\xi)||e_{-}(t_0,\xi)|) 
	\\
	&\cleq |\xi|^{-1} (  \tau_0^{\frac{1}{2} + \re \nu} \tau^{\frac{1}{2} - \re \nu}  + \tau^{\frac{1}{2} + \re \nu} \tau_0^{\frac{1}{2} - \re \nu}  )
	\\
	&\cleq |\xi|^{-1} \tau^{\frac{1}{2} + \re \nu} \tau_0^{\frac{1}{2} - \re \nu} 
	\\
	&\cleq 
	\begin{cases}
	|\xi|^{-1}\tau_0^{\frac{1}{2} - \re \nu} 
	\\
	(1+t)^{\frac{1}{2} + \re \nu} (1+t_0)^{\frac{1}{2} - \re \nu}
	\end{cases}
	\\
	& \ceq
	\begin{cases}
	 (1+t_0)^{\frac{1}{2} - \re \nu}  |\xi|^{-\frac{1}{2}- \re \nu}
	 \\
	(1+t_0)^{\frac{1}{2} - \re \nu}(1+t)^{\frac{1}{2} + \re \nu}
	\end{cases}.
\end{align*}

We also have 
\begin{align*}
	|\dot{E_{0}}(t,t_0,\xi)| 
	&\cleq |\xi|^{-1} (|\dot{e_{+}}(t,\xi)||\dot{e_{-}}(t_0,\xi)| + |\dot{e_{+}}(t_0,\xi)||\dot{e_{-}}(t,\xi)|) 
	\\
	&\cleq |\xi|(\tau^{-\frac{1}{2} + \re \nu}\tau_0^{-\frac{1}{2} - \re \nu}
	+ \tau_0^{-\frac{1}{2} + \re \nu}\tau^{-\frac{1}{2} - \re \nu} )
	\\
	& \cleq |\xi|\tau^{-\frac{1}{2} + \re \nu}\tau_0^{-\frac{1}{2} - \re \nu}
	\\
	&\cleq (1+t_0)^{-\frac{1}{2} - \re \nu} (1+t)^{-\frac{1}{2} + \re \nu}.
\end{align*}

Moreover, we have
\begin{align*}
	|\dot{E_{1}}(t,t_0,\xi)| 
	&\cleq |\xi|^{-1} (|e_{+}(t_0,\xi)||\dot{e_{-}}(t,\xi)| + |\dot{e_{+}}(t,\xi)||e_{-}(t_0,\xi)|) 
	\\
	&\cleq \tau^{-\frac{1}{2} - \re \nu} \tau_0^{\frac{1}{2}+\re \nu} + \tau^{-\frac{1}{2} + \re \nu} \tau_0^{\frac{1}{2}-\re \nu}
	\\
	&\cleq  \tau^{-\frac{1}{2} + \re \nu} \tau_0^{\frac{1}{2}-\re \nu}
	\\
	&\cleq 
	(1+t_0)^{\frac{1}{2}-\re \nu}(1+t)^{-\frac{1}{2}+\re \nu}.
\end{align*}

\subsection{Estimates in the case of $\mu=1/4$}
\label{appA.4}

We next consider the case of $\mu = 1/4$, i.e., $\nu = 0$. In this case, we have the logarithmic growth. 

\noindent{\bf Case 2-1.} $\tau \geq \tau_0 \geq N$.

In this case, similar estimates to Case1-1. are valid.   

\noindent{\bf Case 2-2.} $\tau \geq N \geq \tau_0$. 

We have
\begin{align*}
	|E_{0}(t,t_0,\xi)| 
	&\cleq |\xi|^{-1} (|e_{+}(t,\xi)||\dot{e_{-}}(t_0,\xi)| + |\dot{e_{+}}(t_0,\xi)||e_{-}(t,\xi)|) 
	\\
	&\cleq   \tau_0^{-\frac{1}{2}} (1+|\log \tau_0|)
	+  \tau_0^{-\frac{1}{2}}
	\\
	&\cleq   \tau_0^{-\frac{1}{2}}(1+|\log \tau_0|)
	\\
	&\ceq   (1+t_0)^{-\frac{1}{2}} |\xi|^{-\frac{1}{2}} ( 1+|\log ((1+t_0)|\xi|)| )
	\\
	&\cleq 
	\begin{cases}
	|\xi|^{-\frac{1}{2} -\varepsilon}
	\\
	(1+t_0)^{-\frac{1}{2}} (1+t)^{\frac{1}{2}} \left(  1+\log\frac{1+t}{1+t_0} \right)
	\end{cases} 
\end{align*}
for sufficiently small $\varepsilon>0$,
where we use the following lemma. 
\begin{lemma}
\label{lemA.7}
Let $t \geq t_0 \geq0$ and $(1+t)|\xi| \geq N \geq (1+t_0)|\xi|$ for some positive number $N$. We have
\begin{align*}
	1+|\log ( (1+t_0)|\xi| )|
	\cleq 
	\begin{cases}
	(1+\log(1+t_0)) |\xi|^{-\varepsilon}
	\\
	1+\log\frac{1+t}{1+t_0}
	\end{cases}
\end{align*}
for small $\varepsilon>0$. 
\end{lemma}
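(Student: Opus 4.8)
The plan is to reduce the inequality to elementary bounds on the logarithm, organised around a dichotomy on the size of $\tau_0:=(1+t_0)|\xi|$. Two preliminary remarks do most of the work. First, since $1+t_0\geq 1$, the hypothesis $\tau_0\leq N$ forces $|\xi|\leq N$, so in particular $|\xi|^{-\varepsilon}\cgeq 1$ with a constant depending only on $N$ and $\varepsilon$. Second, both candidate right-hand sides are bounded below by $1$: $1+\log(1+t_0)\geq 1$ because $t_0\geq 0$, and $1+\log\frac{1+t}{1+t_0}\geq 1$ because $t\geq t_0$. Hence, whenever $1+|\log\tau_0|$ is bounded by an absolute constant, both asserted inequalities are immediate.

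First I would dispose of the case $\tau_0\geq 1$. Then $1\leq \tau_0\leq N$, so $|\log\tau_0|=\log\tau_0\leq \log N$ and $1+|\log\tau_0|\cleq 1$; combined with the two remarks above (using $|\xi|^{-\varepsilon}\cgeq 1$ for the first bound) this settles both inequalities.

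It remains to treat $\tau_0<1$. Here $|\log\tau_0|=-\log\tau_0=-\log(1+t_0)-\log|\xi|$, and since $\log(1+t_0)\geq 0$ this is at most $\log(1/|\xi|)$; moreover $\tau_0<1$ gives $0<|\xi|<1/(1+t_0)\leq 1$. For the first estimate I would now apply the elementary inequality $\log(1/s)\leq \varepsilon^{-1}s^{-\varepsilon}$ for $0<s\leq 1$ with $s=|\xi|$, which gives $1+|\log\tau_0|\cleq |\xi|^{-\varepsilon}\leq (1+\log(1+t_0))|\xi|^{-\varepsilon}$, the implicit constant depending on $\varepsilon$. For the second estimate I would instead use the complementary hypothesis $\tau=(1+t)|\xi|\geq N$, i.e. $1/|\xi|\leq (1+t)/N$, hence $-\log|\xi|\leq \log(1+t)-\log N$; together with $|\log\tau_0|=-\log(1+t_0)-\log|\xi|$ this yields $1+|\log\tau_0|\leq 1-\log N+\log\frac{1+t}{1+t_0}\cleq 1+\log\frac{1+t}{1+t_0}$, the implicit constant depending only on $N$ (using $\log\frac{1+t}{1+t_0}\geq 0$).

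There is no genuine obstacle in the argument; the only place that needs a little care is the small-$|\xi|$ regime, where the potentially large term $-\log|\xi|$ is absorbed either into a small power $|\xi|^{-\varepsilon}$ (for the first bound) or into $\log(1+t)$ via the lower bound $|\xi|\geq N/(1+t)$ furnished by $\tau\geq N$ (for the second bound). Everything else is bookkeeping of constants depending on $N$ and $\varepsilon$.
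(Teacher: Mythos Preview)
Your proof is correct and follows essentially the same route as the paper: both arguments observe $|\xi|\leq N$, split on whether $\tau_0=(1+t_0)|\xi|$ is above or below $1$, absorb $-\log|\xi|$ into $|\xi|^{-\varepsilon}$ for the first bound, and use $|\xi|^{-1}\cleq 1+t$ (from $\tau\geq N$) for the second. The only cosmetic difference is that the paper handles the first inequality in one line via $1+|\log((1+t_0)|\xi|)|\cleq (1+\log(1+t_0))(1+|\log|\xi||)$ without explicitly splitting on $\tau_0$, whereas you carry the case split through both estimates; the content is the same.
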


\begin{proof}
By the assumption, we have $|\xi|\leq N$. 
The first inequality comes from
\begin{align*}
	1+|\log ((1+t_0)|\xi|)|
	\cleq (1+\log(1+t_0)) (1+\left|\log|\xi|\right|)
	\cleq (1+\log(1+t_0)) |\xi|^{-\varepsilon}.
\end{align*}
It also holds that 
\begin{align*}
	1+|\log ((1+t_0)|\xi|)|
	&= 
	\begin{cases}
	1+\log((1+t_0)|\xi|) & \text{ if } (1+t_0)|\xi| \geq 1
	\\
	1-\log((1+t_0)|\xi|) & \text{ if } (1+t_0)|\xi| < 1
	\end{cases}
	\\
	&= 
	\begin{cases}
	1+\log((1+t_0)|\xi|) & \text{ if } (1+t_0)|\xi| \geq 1
	\\
	1+\log(((1+t_0)|\xi|)^{-1}) & \text{ if } (1+t_0)|\xi| < 1
	\end{cases}
	\\
	&\cleq 
	\begin{cases}
	1 & \text{ if } (1+t_0)|\xi| \geq 1
	\\
	1+\log\frac{1+t}{1+t_0} & \text{ if }  (1+t_0)|\xi| < 1
	\end{cases}
	\\
	&\cleq 1+\log\frac{1+t}{1+t_0}
\end{align*}
since $|\xi|^{-1} \cleq (1+t)$ by the assumption. 
\end{proof}

We also estimate $E_1$ as follows.
\begin{align*}
	|E_{1}(t,t_0,\xi)| 
	&\cleq |\xi|^{-1} (|e_{+}(t_0,\xi)||e_{-}(t,\xi)| + |e_{+}(t,\xi)||e_{-}(t_0,\xi)|) 
	\\
	&\cleq |\xi|^{-1} (|e_{+}(t_0,\xi)| + |e_{-}(t_0,\xi)|)
	\\
	&\cleq |\xi|^{-1} (  \tau_0^{\frac{1}{2}}  +  \tau_0^{\frac{1}{2}} (1+|\log \tau_0|)  )
	\\
	&\cleq |\xi|^{-1} \tau_0^{\frac{1}{2}}(1+|\log \tau_0|)
	\\
	&\cleq   
	\begin{cases}
	|\xi|^{-1} 
	\\
	(1+t_0)^{\frac{1}{2}} (1+t)^{\frac{1}{2}}\left(  1+\log\frac{1+t}{1+t_0} \right)
	\end{cases},
\end{align*}
where we use $\tau_0^{\frac{1}{2}}(1+|\log \tau_0|) \cleq 1$ for $\tau_0 \leq N$ and Lemma \ref{lemA.7}.
We also have 
\begin{align*}
	|\dot{E_{0}}(t,t_0,\xi)| 
	&\cleq |\xi|^{-1} (|\dot{e_{+}}(t,\xi)||\dot{e_{-}}(t_0,\xi)| + |\dot{e_{+}}(t_0,\xi)||\dot{e_{-}}(t,\xi)|) 
	\\
	&\cleq |\xi|(|\dot{e_{-}}(t_0,\xi)| + |\dot{e_{+}}(t_0,\xi)|)
	\\
	&\cleq |\xi|\tau_0^{-\frac{1}{2}} (1+|\log \tau_0|)
	+ |\xi| \tau_0^{-\frac{1}{2}} 
	\\
	&\cleq |\xi|\tau_0^{-\frac{1}{2}}(1+|\log \tau_0|)
	\\
	&\cleq (1+t_0)^{-\frac{1}{2}} |\xi|^{\frac{1}{2}}\left(  1+\log\frac{1+t}{1+t_0} \right)
	\\
	&\cleq (1+t_0)^{-\frac{1}{2}} (1+t)^{-\frac{1}{2}}\left(  1+\log\frac{1+t}{1+t_0} \right).
\end{align*}
Moreover, we have
\begin{align*}
	|\dot{E_{1}}(t,t_0,\xi)| 
	&\cleq |\xi|^{-1} (|e_{+}(t_0,\xi)||\dot{e_{-}}(t,\xi)| + |\dot{e_{+}}(t,\xi)||e_{-}(t_0,\xi)|) 
	\\
	&\cleq  \tau_0^{\frac{1}{2}} + \tau_0^{\frac{1}{2}}(1+|\log \tau_0|)
	\\
	&\cleq \tau_0^{\frac{1}{2}}(1+|\log \tau_0|)
	\\
	&\ceq 
	 (1+t_0)^{\frac{1}{2}}|\xi|^{\frac{1}{2}}\left(  1+\log\frac{1+t}{1+t_0} \right)
	\\
	&\cleq 
	(1+t_0)^{\frac{1}{2}}(1+t)^{-\frac{1}{2}}\left(  1+\log\frac{1+t}{1+t_0} \right)
	\\
	&\cleq 1,
\end{align*}
where we use $s^{-1/2}\log s \leq C$ for $s \geq 1$ in the last inequality.

\noindent{\bf Case 2-3.} $N \geq \tau \geq \tau_0$. 

By Lemma \ref{lemA.6}, we have
%
\begin{align*}
	|E_{0}(t,t_0,\xi)| 
	&\cleq |\xi|^{-1} (|e_{+}(t,\xi)\dot{e_{-}}(t_0,\xi)- \dot{e_{+}}(t_0,\xi) e_{-}(t,\xi)|) 
	\\
	&\cleq \tau^{\frac{1}{2}}\tau_{0}^{-\frac{1}{2}} \left(1+ \log \frac{\tau}{\tau_0}\right)
	\\
	&\cleq (1+t)^{\frac{1}{2}}(1+t_0)^{-\frac{1}{2}} \left(1+ \log \frac{1+t}{1+t_0}\right)
	\\
	&\cleq |\xi|^{-\frac{1}{2}-\varepsilon}
\end{align*}
for small $\varepsilon>0$.

We also estimate $E_1$ as follows.
\begin{align*}
	|E_{1}(t,t_0,\xi)| 
	&\cleq |\xi|^{-1} (|e_{+}(t_0,\xi)e_{-}(t,\xi) - e_{+}(t,\xi)e_{-}(t_0,\xi)|) 
	\\
	&\cleq |\xi|^{-1}\tau_0^{\frac{1}{2}} \tau^{\frac{1}{2}}  \left(1+ \log \frac{\tau}{\tau_0}\right)
	\\
	&\cleq 
	\begin{cases}
	|\xi|^{-1}
	\\
	(1+t)^{\frac{1}{2}} (1+t_0)^{\frac{1}{2}} \left(1+ \log \frac{1+t}{1+t_0}\right)
	\end{cases},
\end{align*}
where we use $\tau_0^{\frac{1}{2}} \tau^{\frac{1}{2}}  \left(1+ \log \frac{\tau}{\tau_0}\right) \cleq 1$ for $\tau_0 \leq \tau \leq N$. 
We also have 
\begin{align*}
	|\dot{E_{0}}(t,t_0,\xi)| 
	&\cleq |\xi|^{-1} (|\dot{e_{+}}(t,\xi)\dot{e_{-}}(t_0,\xi)-\dot{e_{+}}(t_0,\xi)\dot{e_{-}}(t,\xi)|) 
	\\
	&\cleq |\xi|\tau^{-\frac{1}{2}}\tau_0^{-\frac{1}{2}} \left(1+ \log \frac{\tau}{\tau_0}\right)
	\\
	&\cleq (1+t_0)^{-\frac{1}{2}} (1+t)^{-\frac{1}{2}}\left(1+ \log \frac{1+t}{1+t_0}\right).
\end{align*}
Moreover, we have
\begin{align*}
	|\dot{E_{1}}(t,t_0,\xi)| 
	&\cleq |\xi|^{-1} (|e_{+}(t_0,\xi) \dot{e_{-}}(t,\xi)-\dot{e_{+}}(t,\xi)e_{-}(t_0,\xi)|) 
	\\
	&\cleq \tau^{-\frac{1}{2} } \tau_0^{\frac{1}{2}} \left(1+ \log \frac{\tau}{\tau_0}\right)
	\\
	&\cleq 
	(1+t_0)^{\frac{1}{2}}(1+t)^{-\frac{1}{2}} \left(1+ \log \frac{1+t}{1+t_0}\right)
	\\
	&\cleq 1.
\end{align*}

Combining these estimates, we get the following.
\begin{lemma}
\label{lemA.8}
For any $t \geq t_0 \geq 0$ and any $\xi \in \mathbb{R}^d$, the following estimates are valid. 
If $\mu \neq 1/4$, i.e., $\nu\neq0$, then 
\begin{align*}
	|E_{0}(t,t_0,\xi)| 
	&\cleq 
	\begin{cases}
	1+|\xi|^{-\frac{1}{2}-\re\nu}
	\\
	(1+t)^{\frac{1}{2}+\re \nu} (1+t_0)^{-\frac{1}{2} - \re\nu}
	\end{cases},
	\\
	|E_{1}(t,t_0,\xi)| 
	&\cleq 
	\begin{cases}
	|\xi|^{-1} + (1+t_0)^{\frac{1}{2} - \re \nu} |\xi|^{-\frac{1}{2}-\re\nu}
	\\
	(1+t)^{\frac{1}{2}+\re\nu} (1+t_0)^{\frac{1}{2} - \re\nu}
	\end{cases},
	\\
	|\dot{E_{0}}(t,t_0,\xi)| &\cleq 
	\begin{cases}
	|\xi| +(1+t_0)^{-\frac{1}{2} - \re\nu}|\xi|^{\frac{1}{2}-\re\nu}
	\\
	|\xi| + (1+t)^{-\frac{1}{2}+\re\nu} (1+t_0)^{-\frac{1}{2} - \re\nu}
	\end{cases},
	\\
	|\dot{E_{1}}(t,t_0,\xi)| 
	&\cleq (1+t)^{-\frac{1}{2}+\re\nu} (1+t_0)^{\frac{1}{2} - \re\nu}.
\end{align*}
If $\mu =1/4$, i.e., $\nu = 0$, then
\begin{align*}
	|E_{0}(t,t_0,\xi)| 
	&\cleq 
	\begin{cases}
	1+|\xi|^{-\frac{1}{2}-\varepsilon}
	\\
	(1+t)^{\frac{1}{2}} (1+t_0)^{-\frac{1}{2}} \left(1+\log\frac{1+t}{1+t_0}\right)
	\end{cases},
	\\
	|E_{1}(t,t_0,\xi)| 
	&\cleq 
	\begin{cases}
	|\xi|^{-1}
	\\
	(1+t)^{\frac{1}{2}} (1+t_0)^{\frac{1}{2}}\left(1+\log\frac{1+t}{1+t_0}\right)
	\end{cases},
	\\
	|\dot{E_{0}}(t,t_0,\xi)| &\cleq 
	1+|\xi|,
	\\
	|\dot{E_{1}}(t,t_0,\xi)| 
	&\cleq 1
\end{align*}
for arbitrarily small $\varepsilon>0$. 
\end{lemma}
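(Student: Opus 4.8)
The statement is the bookkeeping summary of the regime-by-regime analysis already carried out in Subsections~\ref{appA.3} and~\ref{appA.4}, so the plan is to assemble those pointwise bounds into a single uniform form; no genuinely new estimate is needed, and the work lies in organizing the alternatives correctly.

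First I would fix once and for all the constant $N$ supplied by Lemma~\ref{lemA.4} and, writing $\tau=(1+t)|\xi|$ and $\tau_0=(1+t_0)|\xi|$, note that $t\geq t_0$ forces $\tau\geq\tau_0$, so only the three regimes $\tau\geq\tau_0\geq N$, $\tau\geq N\geq\tau_0$, and $N\geq\tau\geq\tau_0$ occur. In each of them the formulas expressing $E_0,E_1,\dot E_0,\dot E_1$ as ratios of products of $e_{\pm}$, $\dot e_{\pm}$ evaluated at $t$ and at $t_0$ over the constant denominator $\frac{2}{\pi}|\xi|$ (the Wronskian computation of Subsection~\ref{appA.2}) reduce everything to the Bessel bounds of Lemmas~\ref{lemA.1}--\ref{lemA.3}; the resulting inequalities are exactly those displayed in Cases~1-1 through~1-3 when $\mu\neq1/4$ and in Cases~2-1 through~2-3 when $\mu=1/4$.

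The second step is to take, for each of the four symbols, the pointwise maximum of the three regime bounds and simplify. Here I would repeatedly use: $1+t_0\leq 1+t$; that $\tau\geq N$ gives $|\xi|^{-1}\cleq 1+t$ and $\tau_0\geq N$ gives $|\xi|^{-1}\cleq 1+t_0$ (so that a negative power of $|\xi|$ may be traded for a power of $1+t$ or $1+t_0$), while $\tau\leq N$ gives $|\xi|\cleq 1$; and the sign information $\re\nu\in[0,1/2]$ for $\mu\geq0$, with $\re\nu<1/2$ exactly when $\mu>0$ and $\re\nu=0$ exactly when $\mu\geq1/4$, so that the exponents $\frac{1}{2}\pm\re\nu$ and $-\frac{1}{2}\pm\re\nu$ have fixed sign and the bounds combine monotonically. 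The bracketed pairs of alternatives in the statement are retained on purpose: one regime yields a bound carrying a negative power of $|\xi|$ but no growth in $t$ (useful at high frequency), another yields a frequency-uniform bound that grows polynomially in $1+t$ (useful at low frequency), and the $L^2$ estimates of Lemma~\ref{lem2.1} will split the $\xi$-integral and use whichever alternative is favourable on each piece.

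The only place where care is genuinely required---and hence the main obstacle---is the degenerate case $\mu=1/4$, i.e.\ $\nu=0$, where $J_0$ and $Y_0$ are no longer linked by the $J_{\pm\nu}$ identity and $Y_0$ carries a logarithm. In the regime $N\geq\tau\geq\tau_0$, estimating $e_+(t,\xi)\dot e_-(t_0,\xi)$ and $\dot e_+(t_0,\xi)e_-(t,\xi)$ separately would produce the product of a $\log\tau$ and a $\log\tau_0$, which is too large; instead one must use the sharper bounds of the combined quantities in Lemma~\ref{lemA.6}, which exhibit the cancellation and leave only a single factor $1+|\log(\tau/\tau_0)|$, and then apply Lemma~\ref{lemA.7} to rewrite $1+|\log((1+t_0)|\xi|)|$ either as $|\xi|^{-\eps}$ (absorbing the harmless $\log(1+t_0)$) or as $1+\log\frac{1+t}{1+t_0}$. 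One must also check that $\dot E_0$ and $\dot E_1$ stay bounded by $1+|\xi|$ and by $1$ respectively, which uses $s^{-1/2}\log s\cleq 1$ for $s\geq1$ together with $|\xi|^{-1}\cleq 1+t$ when $\tau\geq N$. Collecting these observations produces the two displays of the lemma.
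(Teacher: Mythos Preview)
Your proposal is correct and matches the paper's approach exactly: the paper itself proves Lemma~\ref{lemA.8} simply by stating ``Combining these estimates, we get the following,'' referring to the case-by-case bounds already established in Subsections~\ref{appA.3} and~\ref{appA.4}. Your write-up is in fact more explicit than the paper about how the three regimes are merged and why Lemmas~\ref{lemA.6} and~\ref{lemA.7} are needed in the $\nu=0$ case, but the underlying argument is identical.
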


From Lemma \ref{lemA.8}, we obtain the energy estimates in Lemma \ref{lem2.1} when $\mu >0$, i.e., $\re \nu <1/2$. 

\begin{remark}
We find that the estimate of the $\dot{H}^1$-norm $\| v_l(t) \|_{\dot{H}^1}$ is independent of $t_0$ when $\re \nu <1/2$. In Lemma \ref{lemA.8}, for example, we have
\begin{align*}
	|E_{1}(t,t_0,\xi)| 
	\cleq  |\xi|^{-1} + (1+t_0)^{\frac{1}{2} - \re \nu} |\xi|^{-\frac{1}{2}-\re\nu}.
\end{align*}
It seems to depend on $t_0$. However, in the proof, $(1+t_0)^{\frac{1}{2} - \re \nu} |\xi|^{-\frac{1}{2}-\re\nu}$ appears only in the case of $(1+t_0)|\xi| \leq N$. In this case, $|\xi|(1+t_0)^{\frac{1}{2} - \re \nu} |\xi|^{-\frac{1}{2}-\re\nu} =(1+t_0)^{\frac{1}{2} - \re \nu} |\xi|^{\frac{1}{2}-\re\nu} \leq N^{\frac{1}{2} - \re \nu}$ since $\frac{1}{2} - \re \nu \geq 0$. Therefore, $|E_{1}(t,t_0,\xi)| \leq C$ and $C$ is independent of $t_0$. The similar independence can be checked for $E_1$, $\dot{E_0}$, and  $\dot{E_1}$ when $\re \nu <1/2$.
\end{remark}

Moreover, when $\mu < 0$, i.e., $\re \nu > 1/2$, the $\dot{H}^1$-norm of the linear solution may be unbounded as follows. 
\begin{corollary}
\label{corA.9}
Let $\mu \in (-\infty,0)$. Then, the solution $v_l$ of \eqref{KG} satisfies the following estimates.
\begin{align*}
	&\| v_l(t) \|_{\dot{H}^1} +\| \partial_t v_l(t) \|_{L^2} \cleq_{t_0} (1+t)^{-\frac{1}{2}+\re\nu} (\| v_{l,0} \|_{H^1} +\| v_{l,1} \|_{L^2}),
\end{align*}
where the implicit constant depends on $t_0$ and is independent of $t$. 
Moreover, we have the following estimate.
\begin{align*}
	&\| v_l(t) \|_{L^2} \cleq
	(1+t)^{\frac{1}{2}+\re\nu} (1+t_0)^{\frac{1}{2} - \re\nu} (\| v_{l,0} \|_{L^2}+\| v_{l,1} \|_{L^2})
\end{align*}
where the implicit constants are independent of $t_0$ and $t$. 
\end{corollary}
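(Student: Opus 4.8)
The plan is to derive both bounds exactly as Lemma~\ref{lem2.1} was obtained in the Appendix: apply the Plancherel theorem to $\widehat{v_l}(t,\xi)=E_0(t,t_0,\xi)\widehat{v_{l,0}}(\xi)+E_1(t,t_0,\xi)\widehat{v_{l,1}}(\xi)$ and to $\widehat{\partial_t v_l}(t,\xi)=\dot E_0(t,t_0,\xi)\widehat{v_{l,0}}(\xi)+\dot E_1(t,t_0,\xi)\widehat{v_{l,1}}(\xi)$, and insert the pointwise multiplier estimates of Lemma~\ref{lemA.8}. Since $\mu<0<1/4$, the parameter $\nu=\tfrac12\sqrt{1-4\mu}$ is real with $\nu>1/2$, so $\re\nu=\nu>1/2$ and we are in the case $\nu\neq0$ of Lemma~\ref{lemA.8}; all the bounds there are therefore available without change. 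The only structural difference from the regime $\mu>0$ treated there is the sign reversal $\tfrac12-\re\nu<0$, so that the factors $(1+t)^{-\frac12+\re\nu}$ now grow in $t$ while $(1+t_0)^{\frac12-\re\nu}\le1$.

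For the $L^2$-estimate I would use only the globally valid forms of the Lemma~\ref{lemA.8} bounds, namely $|E_0(t,t_0,\xi)|\cleq(1+t)^{\frac12+\re\nu}(1+t_0)^{-\frac12-\re\nu}$ and $|E_1(t,t_0,\xi)|\cleq(1+t)^{\frac12+\re\nu}(1+t_0)^{\frac12-\re\nu}$. Because $1+t_0\ge1$ gives $(1+t_0)^{-\frac12-\re\nu}\le(1+t_0)^{\frac12-\re\nu}$, Plancherel yields at once
\[
  \|v_l(t)\|_{L^2}\le\|E_0\widehat{v_{l,0}}\|_{L^2}+\|E_1\widehat{v_{l,1}}\|_{L^2}
  \cleq(1+t)^{\frac12+\re\nu}(1+t_0)^{\frac12-\re\nu}\big(\|v_{l,0}\|_{L^2}+\|v_{l,1}\|_{L^2}\big),
\]
with constant independent of $t$ and $t_0$; this is the second claim.

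For the first estimate I would split $\R^d_\xi$ into the three regions used in the proof of Lemma~\ref{lemA.8}: with $N$ as in Lemma~\ref{lemA.4}, these are $\{(1+t_0)|\xi|\ge N\}$, $\{(1+t)|\xi|\ge N\ge(1+t_0)|\xi|\}$ and $\{(1+t)|\xi|\le N\}$ (recall $(1+t)|\xi|\ge(1+t_0)|\xi|$ since $t\ge t_0$). On the first, high-frequency set the bounds of Lemma~\ref{lemA.8} reduce to the free-wave ones, producing the contribution $\|v_{l,0}\|_{\dot H^1}+\|v_{l,1}\|_{L^2}$. On the two low-frequency sets one cannot use the multiplier $|\xi|^{-\frac12-\re\nu}$ appearing in Lemma~\ref{lemA.8}, since it is no longer locally square-integrable once $\re\nu>1/2$; instead, on those sets the relevant case bounds convert every surviving negative power of $|\xi|$ into a power of $1+t$ — one uses $|\xi|\le N/(1+t_0)$ and, where a factor $|\xi|^{\frac12-\re\nu}$ remains, the inequality $(1+t)|\xi|\ge N$ together with $\tfrac12-\re\nu<0$ to bound it by $N^{\frac12-\re\nu}(1+t)^{-\frac12+\re\nu}$ — leaving a contribution $\cleq(1+t)^{-\frac12+\re\nu}(1+t_0)^{\frac12-\re\nu}\big(\|v_{l,0}\|_{L^2}+\|v_{l,1}\|_{L^2}\big)\le(1+t)^{-\frac12+\re\nu}\big(\|v_{l,0}\|_{L^2}+\|v_{l,1}\|_{L^2}\big)$. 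Since $\re\nu>1/2$ gives $(1+t)^{-\frac12+\re\nu}\ge1$, the two pieces combine into $(1+t)^{-\frac12+\re\nu}\big(\|v_{l,0}\|_{H^1}+\|v_{l,1}\|_{L^2}\big)$, and the identical computation handles $\|\partial_t v_l(t)\|_{L^2}$.

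The one point requiring care — it is essentially bookkeeping rather than a genuine obstacle — is to select, in each of the three regimes, whichever of the two Lemma~\ref{lemA.8} bounds trades negative powers of $|\xi|$ for bounded, or at worst $(1+t)^{-\frac12+\re\nu}$-growing, powers of $1+t$ and not for growing powers of $1+t_0$; the inputs $t\ge t_0$, $1+t_0\ge1$ and $\re\nu>1/2$ are exactly what make this work. This also explains the two features of the first estimate that differ from Lemma~\ref{lem2.1}: the appearance of the full $H^1$-norm of the data (forced by the bounded-frequency part, where the multiplier behaves like $|\xi|^{-\frac12-\re\nu}$) and the slowly growing prefactor $(1+t)^{-\frac12+\re\nu}$. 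Nothing beyond Lemmas~\ref{lemA.4} and~\ref{lemA.8} is needed.
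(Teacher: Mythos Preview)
Your proposal is correct and follows the paper's own route exactly: redo the region-by-region multiplier analysis behind Lemma~\ref{lemA.8}, and use the sign reversal $\tfrac12-\re\nu<0$ to trade the surviving factors $|\xi|^{\frac12-\re\nu}$ for $(1+t)^{-\frac12+\re\nu}$. One small correction to your bookkeeping: on the lowest-frequency zone $\{(1+t)|\xi|\le N\}$ the bound needed for the surplus factor of $|\xi|$ (coming from the $\dot H^1$-weight) is $|\xi|\le N/(1+t)$ from $\tau\le N$, not $|\xi|\le N/(1+t_0)$---with that fix the calculation matches the paper's $|\xi||E_0(t,t_0,\xi)|\cleq(1+t)^{-\frac12+\re\nu}$ and the rest goes through as you describe.
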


\begin{proof}
We note that $\re\nu >1/2$ when $\mu<0$. 
As in Appendix \ref{appA.4}, we have estimates such as
\begin{align*}
	|\xi||E_0(t,t_0,\xi)| 
	&\cleq |\xi|^{\frac{1}{2}-\re\nu} \cleq (1+t)^{-\frac{1}{2}+\re\nu} 
\end{align*}
when $\tau \geq N  \geq \tau_0$, and 
\begin{align*}
	|\xi||E_0(t,t_0,\xi)| 
	&\cleq |\xi| \tau^{\frac{1}{2}+\re\nu}  \tau_0^{-\frac{1}{2}-\re\nu}
	\cleq \tau |\xi| \tau^{-\frac{1}{2}+\re\nu}  \tau_0^{-\frac{1}{2}-\re\nu}
	\\
	&\cleq N |\xi| (1+t)^{-\frac{1}{2}+\re\nu} (1+t_0)^{-\frac{1}{2}-\re\nu} |\xi|^{-1}
	\cleq (1+t)^{-\frac{1}{2}+\re\nu}
\end{align*}
when $N\geq \tau \geq  \tau_0$. The similar estimates hold for $E_1$, $\dot{E_0}$, and $\dot{E_1}$ and thus the statement holds.
\end{proof}

\begin{remark}
Lemma \ref{lem2.1} and Corollary \ref{corA.9} imply the following estimates for the scale-invariant damping \eqref{DW} with $\mu_2=0$, which were obtained by Wirth \cite[Theorem 3.4]{Wir04}.
\begin{align*}
	&\norm{u_l(t)}_{\dot{H}^1} + \norm{\partial_t u_l(t)}_{L^2} \cleq 
	\begin{cases}
	(1+t)^{-\frac{\mu_1}{2}} & \text{ if }0< \mu_1 \leq 2,
	\\
	(1+t)^{-1} &  \text{ if } \mu_1>2,
	\end{cases}
	\\
	&\norm{u_l(t)}_{L^2} \cleq 
	\begin{cases}
	(1+t)^{1-\mu_1} & \text{ if }0< \mu_1<1,
	\\
	1+\log(1+t) & \text{ if } \mu_1 =1,
	\\
	1 &  \text{ if } \mu_1>1.
	\end{cases}
\end{align*}
We note that $\re \nu =\frac{1}{2}|\mu_1-1|$ when $\mu_2=0$ and we recall that $u_l=(1+t)^{-\mu_1/2}v_l$ and $\partial_t u_l =(-\mu_1/2)(1+t)^{-\mu_1/2-1}v_l+(1+t)^{-\mu_1/2} \partial_t v_l$. Therefore, Lemma \ref{lem2.1} and Corollary \ref{corA.9} are extension of the estimates by Wirth \cite[Theorem 3.4]{Wir04}. 
\end{remark}


\section{Solutions}
\label{appB}


\begin{definition}
\label{def}
We say that $v$ is a solution of \eqref{NLKG} on $I=[0,T)$ if $v \in C(I:H^1(\mathbb{R}^d))$, $v \in L^{p_1}(I:L^{2p_1}(\mathbb{R}^d))$, $(v(0),\partial_t v(0))=(v_0,v_1)$, and $v$ satisfies 
\begin{align*}
	v(t) = \mathcal{E}_{0}(t,0) v_0 + \mathcal{E}_{1}(t,0) v_1 + \int_{0}^{t} \mathcal{E}_{1}(t,s) \frac{\lambda}{(1+s)^{\frac{2\mu_1}{d-2}}} |v(s)|^{\frac{4}{d-2}} v(s) ds.
\end{align*}
\end{definition}

\begin{definition}
We say that $v$ is a distributional solution of \eqref{NLKG} on an interval $[0,T)$ if $v$ satisfies $\partial_t^2 v - \Delta v +\mu (1+t)^{-2} v = \lambda(1+t)^{-\frac{2\mu_1}{d-2}} |v|^{\frac{4}{d-2}}v$ in $\mathcal{D}'((0,T) \times \mathbb{R}^d)$. 
\end{definition}

\begin{definition}
We say that $v$ is a solution of the wave integral equation of \eqref{NLKG} on an interval $I \ni0$ if $v$ satisfies $(v,v_t) \in C(I:H^1 \times L^2)$, $v \in L^{p_1}(I:L^{2p_1})$, and
\begin{align}
	v(t) &= \mathcal{W}_0(t) v_0 + \mathcal{W}_1(t)v_1
	+ \int_{0}^{t} \mathcal{W}_1(t-s) \frac{-\mu}{(1+s)^2} v(s)ds 
	\\ \notag
	& + \int_{0}^{t} \mathcal{W}_1(t-s)  \frac{\lambda}{(1+s)^{\frac{2\mu_1}{d-2}}} |v(s)|^{\frac{4}{d-2}} v(s) ds,
	\\
	&(v(0),v_t(0))=(v_0,v_1).
\end{align}
\end{definition}

\begin{lemma}
Let $3 \leq d \leq5$. 
If $v$ is a global solution of \eqref{NLKG} in the sense of Definition \ref{def}, then $v$ is the distributional solution to \eqref{NLKG}.
\end{lemma}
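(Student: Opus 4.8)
The plan is to split $v$ into its linear and Duhamel parts and to verify the equation for each on the Fourier side, where the propagators $\mathcal{E}_0,\mathcal{E}_1$ act as the explicit multipliers $E_0,E_1$ introduced in Section \ref{sec2.1}. Writing $G(s):=\lambda(1+s)^{-\frac{2\mu_1}{d-2}}|v(s)|^{\frac{4}{d-2}}v(s)$, Definition \ref{def} reads $v(t)=v_L(t)+v_N(t)$ with $v_L(t):=\mathcal{E}_0(t,0)v_0+\mathcal{E}_1(t,0)v_1$ and $v_N(t):=\int_0^t\mathcal{E}_1(t,s)G(s)\,ds$. First I would record that $G\in L^1_{\mathrm{loc}}([0,\infty):L^2)$: indeed $\|G(s)\|_{L^2}\le(1+s)^{-\frac{2\mu_1}{d-2}}\|v(s)\|_{L^{2p_1}}^{p_1}$, the weight is bounded on compact time intervals, and $v\in L^{p_1}_{\mathrm{loc}}([0,\infty):L^{2p_1})$ by hypothesis. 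The restriction $3\le d\le5$ enters here because then $p_1-1=\tfrac{4}{d-2}\ge\tfrac{4}{3}>1$, so $z\mapsto|z|^{p_1-1}z$ is $C^1$ with H\"{o}lder-continuous derivative, a fact used throughout this appendix. By Fubini, for a.e.\ $\xi$ the scalar function $s\mapsto\widehat{G}(s,\xi)$ lies in $L^1_{\mathrm{loc}}([0,\infty))$.

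For the linear part I would use that, by the construction in Section \ref{sec2.1} and the Wronskian identity $e_+\dot e_--\dot e_+e_-=\tfrac{2}{\pi}|\xi|$ computed in \S\ref{appA.2}, for each $\xi\neq0$ the functions $t\mapsto E_0(t,0,\xi)$ and $t\mapsto E_1(t,0,\xi)$ are the solutions of the ordinary differential equation \eqref{eq2.2}, i.e.\ $\ddot y+\big(|\xi|^2+\mu(1+t)^{-2}\big)y=0$, with Cauchy data $(1,0)$ and $(0,1)$ at $t=0$; hence $\widehat{v_L}(\cdot,\xi)$ solves \eqref{eq2.2} with data $(\widehat{v_0},\widehat{v_1})(\xi)$. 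I would also record the slightly more general fact that, for each fixed $s$, $t\mapsto E_1(t,s,\xi)$ solves \eqref{eq2.2} with $E_1(s,s,\xi)=0$ and $\partial_t E_1(t,s,\xi)|_{t=s}=1$, the relation $\partial_t^2 E_1=-\big(|\xi|^2+\mu(1+t)^{-2}\big)E_1$ being just \eqref{eq2.2} applied to the numerator of $E_1$. Testing $v_L$ against $\phi\in\mathcal{D}((0,\infty)\times\R^d)$, using Plancherel in $x$ and integrating by parts twice in $t$ (legitimate since $\widehat{v_L}(\cdot,\xi)$ is smooth on $(0,\infty)$ for $\xi\neq0$ and $\phi$ is supported away from $t=0$ and from $t=\infty$), gives $\partial_t^2 v_L-\Delta v_L+\mu(1+t)^{-2}v_L=0$ in $\mathcal{D}'((0,\infty)\times\R^d)$.

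The Duhamel part is the main step. On the Fourier side $\widehat{v_N}(t,\xi)=\int_0^t E_1(t,s,\xi)\widehat{G}(s,\xi)\,ds$. Using $E_1(t,t,\xi)=0$ and $\partial_t E_1(t,s,\xi)|_{t=s}=1$, the variation-of-parameters formula for a second order linear ODE with an $L^1_{\mathrm{loc}}$ forcing should give, for a.e.\ $\xi$, that $z(t):=\widehat{v_N}(t,\xi)$ lies in $W^{2,1}_{\mathrm{loc}}$ with
\begin{align*}
	\dot z(t)=\int_0^t\partial_t E_1(t,s,\xi)\widehat{G}(s,\xi)\,ds,\qquad \ddot z(t)+\big(|\xi|^2+\mu(1+t)^{-2}\big)z(t)=\widehat{G}(t,\xi)\quad\text{for a.e. }t.
\end{align*}
Then, testing $v_N$ against $\phi\in\mathcal{D}((0,\infty)\times\R^d)$, applying Plancherel in $x$, swapping the $t$- and $\xi$-integrals by Fubini, integrating by parts twice in $t$ (valid since $z(\cdot,\xi)\in W^{2,1}_{\mathrm{loc}}$ and $\phi$ is compactly supported in $t\in(0,\infty)$), and inserting the ODE for $z$, I would conclude $\partial_t^2 v_N-\Delta v_N+\mu(1+t)^{-2}v_N=G$ in $\mathcal{D}'((0,\infty)\times\R^d)$. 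Adding the two parts yields exactly $\partial_t^2 v-\Delta v+\mu(1+t)^{-2}v=\lambda(1+t)^{-\frac{2\mu_1}{d-2}}|v|^{\frac{4}{d-2}}v$ in $\mathcal{D}'$, i.e.\ $v$ is a distributional solution.

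The step I expect to be the main obstacle is justifying the two differentiations under the integral sign that produce the formulas for $\dot z$ and $\ddot z$: with only $\widehat{G}(\cdot,\xi)\in L^1_{\mathrm{loc}}$ in time one cannot differentiate naively. I would handle this by approximating $G$ in $L^1_{\mathrm{loc}}([0,\infty):L^2)$ by functions smooth in $t$ --- for which the identities are classical --- and passing to the limit using the locally uniform (in $\xi$, and locally in $t,t_0$) bounds on $E_1$ and $\partial_t E_1$ from Lemma \ref{lemA.8}, together with dominated convergence; equivalently one invokes the standard regularity of the variation-of-parameters integral at Lebesgue points of $\widehat{G}(\cdot,\xi)$. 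Everything else is bookkeeping. As an alternative to the whole Fourier-side argument, one can first convert the $\mathcal{E}$-Duhamel formula into the wave integral equation via the identity $\mathcal{E}_1(t,s)=\mathcal{W}_1(t-s)-\mu\int_s^t\mathcal{W}_1(t-\sigma)(1+\sigma)^{-2}\mathcal{E}_1(\sigma,s)\,d\sigma$ and Fubini, and then read off the distributional equation directly from the wave integral equation; this is the route used in the proof of Theorem \ref{thm1.3}.
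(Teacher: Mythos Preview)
Your proof is correct, but the route differs from the paper's. The paper also splits $v=v_L+v_N$ and shows \eqref{eqB.5} for the Duhamel part, but instead of working on the Fourier side it first \emph{approximates the solution} by smooth solutions $v_n$ arising from smooth data $(v_{0,n},v_{1,n})\to(v_0,v_1)$; for smooth $v_n$ one can differentiate $v_N$ classically, using $\mathcal{E}_1(t,t)=0$, $\dot{\mathcal{E}}_1(t,t)=1$ to obtain $\partial_t^2 v_N=N(t,v)+\int_0^t\partial_t^2\big(\mathcal{E}_1(t,s)N(s,v)\big)ds$, and then integrates by parts against $\phi$. The restriction $3\le d\le5$ is invoked precisely to guarantee that $G(t,s,x):=\mathcal{E}_1(t,s)N(s,v)$ is twice differentiable (and implicitly for the stability of the approximation $v_n\to v$), since the nonlinearity is then $C^1$.

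Your Fourier-side argument bypasses the solution approximation entirely: you reduce everything to the one-dimensional variation-of-parameters formula for the ODE \eqref{eq2.2} with an $L^1_{\mathrm{loc}}$ forcing $\widehat{G}(\cdot,\xi)$, and the only approximation you need is of the forcing $G$ in $L^1_{\mathrm{loc}}(L^2)$, which is elementary. This is cleaner, and in fact your argument does not genuinely use $d\le5$ at all---the $C^1$ smoothness of the nonlinearity that you flag is not needed for the distributional identity, only $G\in L^1_{\mathrm{loc}}(L^2)$, which already follows from $v\in L^{p_1}_{\mathrm{loc}}(L^{2p_1})$. So your approach is slightly more general; the paper's approach has the advantage of staying in physical space and being closer to the classical computation.
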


\begin{proof}
We will prove that, for any $\phi \in C_0^{\infty}((0,T) \times \mathbb{R}^d)$, 
\begin{align}
\label{eqB.4}
	\int_{(0,T)\times \mathbb{R}^d} v \left( \partial_t^2 -\Delta + \frac{\mu}{(1+t)^2}\right) \phi dtdx
	=\int_{(0,T)\times \mathbb{R}^d}N(t,v) \phi dtdx
\end{align}
where we set $N(t,v):= \lambda(1+t)^{-\frac{2\mu_1}{d-2}} |v(t)|^{\frac{4}{d-2}} v(t)$. We may assume that $v$ is smooth by an approximation. (Indeed, let $(v_{0,n},v_{1,n}) \in C_{0}^{\infty}(\mathbb{R}^d)$ satisfy $\| v_{0,n} -v_0 \|_{H^1} + \| v_{1,n}-v_1 \|_{L^2} \to 0$ as $n \to \infty$ and $v_n$ be the solution of the integral equation starting from $(v_{0,n},v_{1,n})$. Then, $v_n$ is smooth and $\| v_{n} -v \|_{L_t^{\infty}(K:H^1)} + \| \partial_t v_{n}-\partial_t v \|_{L_t^{\infty}(K:L^2)} \to 0$ as $n \to \infty$ for an arbitrary fixed compact interval $K$.)
Setting 
\begin{align*}
	v_{L}:= \mathcal{E}_{0}(t,0) v_0 + \mathcal{E}_{1}(t,0) v_1
	\quad v_{N}:= \int_{0}^{t} \mathcal{E}_{1}(t,s)N(s,v) ds.
\end{align*}
It is trivial that
\begin{align*}
	\int_{(0,T)\times \mathbb{R}^d} v_{L} \left( \partial_t^2 -\Delta + \frac{\mu}{(1+t)^2}\right) \phi dtdx =0.
\end{align*}
Thus, it is enough to show that
\begin{align}
\label{eqB.5}
	\int_{(0,T)\times \mathbb{R}^d} v_{N} \left( \partial_t^2 -\Delta + \frac{\mu}{(1+t)^2}\right) \phi dtdx
	=\int_{(0,T)\times \mathbb{R}^d}N(t,v) \phi dtdx.
\end{align}
Now, we have
\begin{align*}
	\partial_t v_{N} 
	=\int_{0}^{t}  \partial_t (\mathcal{E}_{1}(t,s) N(s,v) ) ds
\end{align*}
and 
\begin{align*}
	\partial_t^2 v_{N} 
	&=N(t,v) + \int_{0}^{t}   \partial_t^2 (\mathcal{E}_{1}(t,s) N(s,v) )  ds 
\end{align*}
since $\mathcal{E}_{1}(t,t)=0$ and $ \dot{\mathcal{E}_{1}}(t,t)=1$. From the above calculation and integration by parts, we have
\begin{align*}
	&\int_{(0,T)\times \mathbb{R}^d} v_{N} \left( \partial_t^2 -\Delta + \frac{\mu}{(1+t)^2}\right) \phi dtdx
	\\
	&=\int_{(0,T)\times \mathbb{R}^d} N(t,v) \phi  dtdx
	+\int_{(0,T)\times \mathbb{R}^d}  \phi \int_{0}^{t} \left( \partial_t^2 -\Delta + \frac{\mu}{(1+t)^2}\right) G(t,s,x) ds dtdx,
\end{align*}
where we set $G(t,s,x):=\mathcal{E}_{1}(t,s) N(s,v)$, which is twice differentiable since $3\leq d \leq 5$, and thus the second term is zero. We get \eqref{eqB.5}. 
%
\end{proof}

\begin{lemma}
Assume that $v$ is a distributional solution of \eqref{NLKG} on an interval $I=[0,T)$ and $v \in C(I : H^1 \times L^2)$ and $v \in L^{p_1}(I:L^{2p_1})$. Then $v$ is also a solution of the wave integral equation of \eqref{NLKG} on $I$.
\end{lemma}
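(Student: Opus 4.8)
The plan is to regard \eqref{NLKG} as the free wave equation with the forcing
\[
	g(t):=-\frac{\mu}{(1+t)^2}v(t)+\frac{\lambda}{(1+t)^{\frac{2\mu_1}{d-2}}}|v(t)|^{\frac{4}{d-2}}v(t),
\]
and to extract the wave integral equation from uniqueness of distributional solutions with $H^1\times L^2$ Cauchy data. Put $v_0:=v(0)$, $v_1:=\partial_t v(0)$, which make sense because $(v,\partial_t v)\in C(I:H^1\times L^2)$. First I would note that $g\in L^1_{\mathrm{loc}}(I:L^2)$: the linear term is in $C(I:L^2)$ since $v\in C(I:L^2)$, while $\big\||v(t)|^{\frac{4}{d-2}}v(t)\big\|_{L^2}=\|v(t)\|_{L^{2p_1}}^{p_1}$ is in $L^1(I)$ by the assumption $v\in L^{p_1}(I:L^{2p_1})$. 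By hypothesis $\partial_t^2 v-\Delta v=g$ in $\mathcal{D}'((0,T)\times\mathbb{R}^d)$.

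Next I would compare $v$ with the Duhamel function
\[
	\tilde v(t):=\mathcal{W}_0(t)v_0+\mathcal{W}_1(t)v_1+\int_0^t\mathcal{W}_1(t-s)g(s)\,ds .
\]
Since $(v_0,v_1)\in H^1\times L^2$ and $g\in L^1_{\mathrm{loc}}(I:L^2)$, the standard theory of the wave equation (readily checked via the Fourier multipliers $\cos(t|\nabla|)$ and $|\nabla|^{-1}\sin(t|\nabla|)$) gives $\tilde v\in C(I:H^1)$, $\partial_t\tilde v\in C(I:L^2)$, shows that $\partial_t^2\tilde v-\Delta\tilde v=g$ in $\mathcal{D}'((0,T)\times\mathbb{R}^d)$, and, using $\mathcal{W}_0(0)=\dot{\mathcal{W}_1}(0)=\mathrm{Id}$ and $\mathcal{W}_1(0)=\dot{\mathcal{W}_0}(0)=0$, that $(\tilde v(0),\partial_t\tilde v(0))=(v_0,v_1)$. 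Hence $w:=v-\tilde v$ satisfies $w\in C(I:H^1)$, $\partial_t w\in C(I:L^2)$, $\partial_t^2 w-\Delta w=0$ in $\mathcal{D}'((0,T)\times\mathbb{R}^d)$, and $(w(0),\partial_t w(0))=(0,0)$. Since the identity $v=\tilde v$ is exactly the wave integral equation (its two remaining requirements, $(v,\partial_t v)\in C(I:H^1\times L^2)$ and $v\in L^{p_1}(I:L^{2p_1})$, being among the hypotheses), it remains to prove $w\equiv0$.

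To show $w\equiv0$ I would mollify in the spatial variable: for a standard mollifier $\rho_\varepsilon$ on $\mathbb{R}^d$, set $w_\varepsilon(t):=w(t)\ast\rho_\varepsilon$. Then $w_\varepsilon(t)$ and $\partial_t w_\varepsilon(t)$ belong to every $H^m$ (continuously in $t$), and $\partial_t^2 w_\varepsilon-\Delta w_\varepsilon=0$ in $\mathcal{D}'((0,T)\times\mathbb{R}^d)$; since $\Delta w_\varepsilon\in C(I:H^m)$, this forces $w_\varepsilon$ to be a genuine, twice $t$-differentiable, solution of the free wave equation on $(0,T)$. Consequently the energy $E_\varepsilon(t):=\tfrac12\|\partial_t w_\varepsilon(t)\|_{L^2}^2+\tfrac12\|\nabla w_\varepsilon(t)\|_{L^2}^2$ is $C^1$ and constant on $(0,T)$; being continuous up to $t=0$ with $w_\varepsilon(0)=0\ast\rho_\varepsilon=0$ and $\partial_t w_\varepsilon(0)=0\ast\rho_\varepsilon=0$, it vanishes identically. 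In particular $\partial_t w_\varepsilon\equiv0$, hence $w_\varepsilon(t)=w_\varepsilon(0)=0$ for every $t$. Letting $\varepsilon\to0$ gives $w\equiv0$, finishing the proof.

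The points to be careful about are classical but should be written out: that the spatial mollification of the distributional PDE is again a distributional PDE, and — the more delicate point — that the continuity in $t$ of $\Delta w_\varepsilon$ genuinely promotes $w_\varepsilon$ to a $C^2$-in-$t$ solution so that the energy identity is legitimate; also the routine check that $\tilde v$ is a distributional solution attaining the Cauchy data $(v_0,v_1)$. I do not anticipate a real obstruction here: the content is precisely the uniqueness theorem for the inhomogeneous wave equation in the energy class with an $L^1_tL^2_x$ source.
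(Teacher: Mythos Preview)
Your proof is correct and takes a genuinely different route from the paper. The paper argues by duality: setting $V:=v-\mathcal{W}_0(t)v_0-\mathcal{W}_1(t)v_1$, it pairs $V$ against a test function $H^a(t,x)=\varphi(t/a)H(t,x)$, where $H(t)=-\int_t^\infty\mathcal{W}_1(t-s)h(s)\,ds$ solves the backward wave equation with source $h\in C_0^\infty$; after letting $a\to0$ (the boundary terms at $t=0$ vanish because $\|V(t)\|_{L^2}/t\to0$) and using Fubini and the self-adjointness of $\mathcal{W}_1$, one reads off directly that $V$ equals the Duhamel integral. Your approach instead writes down the candidate $\tilde v$, checks it is a distributional solution with the correct Cauchy data, and then proves uniqueness for the free wave equation in the energy class via spatial mollification and energy conservation. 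The paper's argument is more self-contained in that it never needs to verify that the Duhamel expression is itself a distributional solution; your argument is more modular and conceptually transparent, cleanly separating the construction of $\tilde v$ from the uniqueness step. Both are standard and of comparable length; the points you flag as ``to be careful about'' (mollification preserves the distributional PDE; the continuity of $\Delta w_\varepsilon$ upgrades $w_\varepsilon$ to $C^2$ in $t$) are indeed routine.
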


\begin{proof}
This follows from standard method (see \cite[Lemma 1.21 in p.10]{Ken15} for example). However, we give the proof. 

Set 
\begin{align*}
	V(t):= v(t) -  (\mathcal{W}_0(t) v_0 + \mathcal{W}_1(t)v_1).
\end{align*}
Then, $V(0)=V_t(0)=0$ and $V$ satisfies $\partial_t^2 V -\Delta V = - \frac{\mu}{(1+t)^2} v +\frac{\lambda}{(1+t)^{\frac{2\mu_1}{d-2}}} |v|^{\frac{4}{d-2}} v$ in $\mathcal{D}'((0,T) \times \mathbb{R}^d)$.

For $h \in C_{0}^{\infty}((0,\infty) \times \mathbb{R}^d)$, we define
\begin{align*}
	H(t,x) =  -\int_{t}^{\infty} \mathcal{W}_1(t-s) h(s) ds
\end{align*}
for $t \in \mathbb{R}$. Then, $H \in C^{\infty}(\mathbb{R}^{1+d})$ and $H=0$ for large $t$.  The support of $H(t)$ in $x$ is compact from the finite propagation.  Moreover, $(\partial_t^2 -\Delta) H =h$. 

Let $\varphi \in C^{\infty}(\mathbb{R})$ satisfy $\varphi(t)=1$ if $t \geq 1$ and $\varphi(t)=0$ if $t \leq 1/2$.  
We set
\begin{align*}
	H^a(t,x)=\varphi\left(\frac{t}{a}\right)H(t,x)
\end{align*}
for $a\in (0,1]$.
Then, $H^a \in C_{0}^{\infty}(\mathbb{R}^{1+d})$.
By the distributional equation, we have
\begin{align*}
	&\int_{(0,\infty) \times \mathbb{R}^d}  V (\partial_t^2 - \Delta) H^a dtdx 
	\\
	&\quad = \int_{(0,\infty) \times \mathbb{R}^d}  \left( - \frac{\mu}{(1+t)^2} v +\frac{\lambda}{(1+t)^{\frac{2\mu_1}{d-2}}} |v(t)|^{\frac{4}{d-2}} v(t)\right)H^a dtdx.
\end{align*}
Since $v\in L_t^{p_1}L_x^{2p_1}$, by the dominated convergence theorem, 
it holds that
\begin{align*}
	&\lim_{ a \to \infty} \int_{(0,\infty) \times \mathbb{R}^d} \left( - \frac{\mu}{(1+t)^2} v +\frac{\lambda}{(1+t)^{\frac{2\mu_1}{d-2}}} |v|^{\frac{4}{d-2}} v\right)H^a dtdx
	\\
	&=\int_{(0,\infty) \times \mathbb{R}^d} \left( - \frac{\mu}{(1+t)^2} v +\frac{\lambda}{(1+t)^{\frac{2\mu_1}{d-2}}} |v|^{\frac{4}{d-2}} v\right)H dtdx.
\end{align*}
By the Fubini theorem, it holds that
\begin{align*}
	&\int_{(0,\infty) \times \mathbb{R}^d}  \left( - \frac{\mu}{(1+t)^2} v +\frac{\lambda}{(1+t)^{\frac{2\mu_1}{d-2}}} |v|^{\frac{4}{d-2}} v\right)H dtdx
	\\
	&=-\int_{\mathbb{R}^d}  \int_{0}^{\infty} \int_{0}^{s}  \left( - \frac{\mu}{(1+t)^2} v +\frac{\lambda}{(1+t)^{\frac{2\mu_1}{d-2}}} |v|^{\frac{4}{d-2}} v\right) \mathcal{W}_1(t-s) h(s) dt dsdx
	\\
	&=\int_{\mathbb{R}^d}  \int_{0}^{\infty}\left\{  \int_{0}^{s} \mathcal{W}_1(s-t)  \left( - \frac{\mu}{(1+t)^2} v +\frac{\lambda}{(1+t)^{\frac{2\mu_1}{d-2}}} |v|^{\frac{4}{d-2}} v\right) dt\right\} h(s)  dsdx,
\end{align*}
where we also use the self-adjointness of $\mathcal{W}_1$. 
On the other hand, 
\begin{align*}
	&\int_{(0,\infty) \times \mathbb{R}^d}  V (\partial_t^2 - \Delta) H^a dtdx 
	\\
	&=
	\int_{(0,\infty) \times \mathbb{R}^d}  V \left\{ \frac{1}{a^2} \varphi''\left(\frac{t}{a}\right)H(t,x)
	+ \frac{2}{a} \varphi'\left(\frac{t}{a}\right)\partial_t H(t,x) +\varphi\left(\frac{t}{a}\right)h(t,x) \right\} dtdx.
\end{align*}
Now, we have
\begin{align*}
	&\lim_{a\to 0}\int_{(0,\infty) \times \mathbb{R}^d} V \frac{1}{a^2} \varphi''\left(\frac{t}{a}\right)H(t,x) dtdx=0,
	\\
	&\lim_{a\to 0} \int_{(0,\infty) \times \mathbb{R}^d}  V \frac{2}{a} \varphi'\left(\frac{t}{a}\right)\partial_t H(t,x) dtdx=0.
\end{align*}
Indeed, since $(V(t),\partial_tV(t)) \in H^1 \times L^2$ and $(V(0),\partial_t V(0))=(0,0)$, we have $\lim_{t\to 0} \| V(t) \|_{L^2}/t=0$. Therefore, let $\varepsilon>0$ and choose $a_0$ satisfying $\| V(t) \|_{L^2} \leq \varepsilon t$ for $t \in (0,a_0]$. Then, we have
\begin{align*}
	&\left| \int_{(0,\infty) \times \mathbb{R}^d}  V \frac{1}{a^2} \varphi''\left(\frac{t}{a}\right)H(t,x) dtdx \right|
	\\
	&=\left| \int_{0}^{a}\int_{\mathbb{R}^d}  V \frac{1}{a^2} \varphi''\left(\frac{t}{a}\right)H(t,x) dxdt \right|
	\\
	&\leq  \frac{1}{a^2} \int_{0}^{a}\int_{\mathbb{R}^d}  |V| |H(t,x)| dxdt
	\\
	&\leq  \frac{1}{a^2} \int_{0}^{a} \norm{V(t)}_{L_x^2}\norm{H(t)}_{L_x^2} dt
	\\
	&\leq \varepsilon \frac{1}{a^2} \int_{0}^{a} t dt  \norm{H}_{L_t^{\infty}L_x^2(0,a)}
	\\
	&\cleq \varepsilon.
\end{align*}
Thus, the first limit holds. The second limit is obtained similarly. By the dominated convergence theorem, we have
\begin{align*}
	\lim_{a\to 0} \int_{(0,\infty) \times \mathbb{R}^d} V \varphi\left(\frac{t}{a}\right)h(t,x) dtdx
	=\int_{(0,\infty) \times \mathbb{R}^d} Vh(t,x) dtdx.
\end{align*}
By the above argument, we obtain
\begin{align*}
	&\int_{(0,\infty) \times \mathbb{R}^d} Vh(t,x) dtdx
	\\
	& \quad =\int_{\mathbb{R}^d}  \int_{0}^{\infty}\left\{  \int_{0}^{t} \mathcal{W}_1(t-s)  \left( - \frac{\mu}{(1+s)^2} v +\frac{\lambda}{(1+s)^{\frac{2\mu_1}{d-2}}} |v|^{\frac{4}{d-2}} v\right) ds\right\} h(t)  dtdx.
\end{align*}
Since $h$ is arbitrary, this means that
\begin{align*}
	V(t)= \int_{0}^{t} \mathcal{W}_1(t-s)  \left( - \frac{\mu}{(1+s)^2} v +\frac{\lambda}{(1+s)^{\frac{2\mu_1}{d-2}}} |v|^{\frac{4}{d-2}} v\right)  ds
\end{align*}
and thus we obtain the desired integral equation
\begin{align*}
	v(t)= \mathcal{W}_0(t) v_0 + \mathcal{W}_1(t)v_1+ \int_{0}^{t} \mathcal{W}_1(t-s)  \left( - \frac{\mu}{(1+s)^2} v +\frac{\lambda}{(1+s)^{\frac{2\mu_1}{d-2}}} |v|^{\frac{4}{d-2}} v\right)  ds.
\end{align*}
\end{proof}

\begin{acknowledgement}
The first author is supported by by JSPS KAKENHI Grant-in-Aid for Early-Career Scientists JP18K13444 and the second author is supported by JSPS KAKENHI  Grant-in-Aid for Young Scientists (B) JP17K14218 and, partially, for Scientific Research (B) JP17H02854. The authors would like to express deep appreciation to Professor Yuta Wakasugi for introducing the known results.
\end{acknowledgement}


\end{document}